\newtheorem{thm}{Theorem}[section]
\newtheorem{cor}[thm]{Corollary}
\newtheorem{prop}[thm]{Proposition}
\newtheorem{lem}[thm]{Lemma}
\theoremstyle{definition}
\newtheorem{defn}[thm]{Definition}
\newtheorem*{ack*}{Acknowledgements:}
\theoremstyle{remark}
\let\c@equation\c@thm
\numberwithin{equation}{section}
\newcommand{\deriv}[2]{\frac{\partial #1}{\partial #2}}
\newcommand{\tanv}[1]{\frac{\partial}{\partial #1}}
\DeclareMathOperator{\Hom}{Hom}
\DeclareMathOperator{\Sym}{Sym}
\DeclareMathOperator{\tr}{tr}
\DeclareMathOperator{\Id}{Id}
\DeclareMathOperator{\rk}{rk}
\DeclareMathOperator{\ad}{ad}
\DeclareMathOperator{\Vol}{Vol}
\DeclareMathOperator{\End}{End}
\DeclareMathOperator{\GL}{GL}
\DeclareMathOperator{\Ric}{Ric}
\DeclareMathOperator{\Scal}{Scal}
\DeclareMathOperator{\Aut}{Aut}
\title{Higher Rank Bergman Kernels on Compact Riemann Surfaces}
\begin{document}

\begin{abstract}
    Let X be a compact Riemann surface equipped with a real-analytic K\"ahler form $\omega$ and let E be a holomorphic vector bundle over $X$ equipped with a real-analytic Hermitian metric $h$. Suppose that the curvature of $h$ is Griffiths-positive. We prove the existence of a global asymptotic expansion in powers of $k$ of the Bergman kernel associated to $(\Sym^k E, \Sym^k h)$ and $\omega$. 
\end{abstract}

\author{Shin Kim}
\address{Department of Mathematics, Statistics, and Computer Science, University of Illinois Chicago, 
Chicago, IL 60607}
\email{skim651@uic.edu}

\maketitle

\tableofcontents

\section{Introduction}

Let $(X,\omega)$ be a compact $n$-dimensional K\"ahler manifold and let $L$ be a holomorphic line bundle over $X$ with a Hermitian metric $h$ such that the curvature form of $h$ is $-2\pi \sqrt{-1} \omega$. For each $k \in \mathbb{N}$,
\begin{equation*}
( \cdot, \cdot ) = \int_X \langle \cdot, \cdot \rangle_{h^k} \frac{\omega^n}{n!}
\end{equation*}
is an $L^2$ inner product on the finite dimensional vector space $H^0(X,L^k)$. Let $d_k = \dim (H^0(X,L^k))$ and let $\{s_i\}_{i=1}^{d_k}$ be an orthonormal basis of $H^0(X,L^k)$. The Bergman kernel $K_k$ is the section of $L^k \boxtimes \overline{L}^k$ over $X \times X$ defined by
\begin{equation*}
    K_k(y,x) = \sum_{i=1}^{d_k} s_i(y) \otimes \overline{s_i}(x)
\end{equation*}
and the Bergman function is the smooth function
\begin{equation*}
    B_k(x) = \sum_{i=1}^{d_k} |s_i(x)|_{h^k}^2.
\end{equation*}

The works of Tian \cite{Tian}, Zelditch \cite{Z}, Catlin \cite{Cat}, Ruan \cite{R}, Berman, Berndtsson, and Sj\"ostrand \cite{BBS}, Dai, Liu, and Ma \cite{DLM}, and Liu and Lu \cite{LL} show that the Bergman function has an asymptotic expansion. More precisely, for fixed nonnegative integers $N$ and $p$, there exist smooth functions $b_{1}, \dotsc, b_{N}$ such that 
\begin{equation} \label{eq: Bergman Expansion}
B_k(x) = k^n + b_{1}(x) k^{n-1} + \dotsb + b_{N}(x) k^{n-N} + O(k^{n-N-1}) 
\end{equation}
where the error term $O(k^{n-N-1})$ is bounded with respect to the $C^p$-norm. Moreover, $b_1(x) = \Scal_\omega(x)$ where $\Scal_\omega(x)$ is the scalar curvature of $\omega$. Because of this, the Bergman function is a central tool in the study of K\"ahler metrics of constant scalar curvature (\cite{Don, Yau}).

A generalization of the asymptotic expansion \eqref{eq: Bergman Expansion} is also known. Let $G$ be a holomorphic vector bundle equipped with a Hermitian metric. For each $k \in \mathbb{N}$, we consider the vector bundle $G \otimes L^k$. Then, the Bergman function $B_k$ is a smooth section of $\End(G \otimes L^k) \cong \End(G)$ and, for any fixed nonnegative integers $N$ and $p$, there exist smooth sections $b_{1}, \dotsc, b_{N}$ of $\End(G)$ such that \eqref{eq: Bergman Expansion} holds (\cite{BBS, Cat, DLM, LL}).

Now, let $X$ be a compact Riemann surface and let $\omega$ be a real analytic K\"ahler form on $X$. Let $E$ be a holomorphic vector bundle over $X$ with a real analytic Hermitian metric $h$ and suppose that the curvature of $h$ is Griffiths-positive. For each $k \in \mathbb{N}$, the volume form $\omega$ and the Hermitian metric $\Sym^k h$ induces an $L^2$ inner product on $H^0(X,\Sym^k E)$. The Bergman function $B_k$ is the section of $\End(\Sym^k E)$ defined by
$$
B_k(x) = \sum_{i=1}^{d_k} \langle \cdot, s_i(x) \rangle_{\Sym^k h} s_i(x),
$$
where $\{s_1, \dotsc, s_{d_k}\}$ is an orthonormal basis of $H^0(X, \Sym^kE)$. The main result of this paper is the following theorem.

\begin{thm} \label{thm: Main}
Let $N$ and $p$ be fixed nonnegative integers. There exist smooth sections $b_{k,0}$, $\dotsc$, $b_{k,N}$ of $\End(\Sym^k E)$ such that the $C^p$ norms of $b_{k,i}$ are $O(k^p)$ for all $i = 0, \dotsc, N$ and
$$
B_k(x) = b_{k,0}(x) k + \dotsb + b_{k,N}(x) k^{1-N} + O(k^{p-N})
$$
with respect to the $C^p$ norm. Moreover, $b_{k,0}$ and $b_{k,1}$ are universal quantities that depend on the K\"ahler form $\omega$, the curvature $F_{\Sym^k h}$, and their derivatives. In fact,
\begin{align*}
b_{k,0}(x) = \hspace{2pt} & \frac{\sqrt{-1}}{k} \Lambda F_{\Sym^k h}(x) \hspace{10pt} \text{ and } \\
    b_{k,1}(x) = \hspace{2pt} & -\frac{1}{2} \Lambda \Delta F_{\Sym^k h}(x) (\Lambda F_{\Sym^k h})^{-1}(x) + \frac{1}{2} \Scal_\omega \Id_{\Sym^k E} (x)\\
    & + \frac{\sqrt{-1}}{2} \left ( \Lambda (\nabla^{1,0} \Lambda F_{\Sym^k h}) (\Lambda F_{\Sym^k h})^{-1}  \wedge (\nabla^{0,1} \Lambda F_{\Sym^k h}) (\Lambda F_{\Sym^k h})^{-1} 
 \right) (x).
\end{align*}
\end{thm}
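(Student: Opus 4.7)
The plan is to reduce the global expansion to a local computation at each point $x_0 \in X$ and then perform the local expansion in a carefully chosen Bochner-style frame. The two enabling facts are: (i) because $h$ is Griffiths-positive, so is $\Sym^k h$, and H\"ormander's $L^2$ estimates apply uniformly in $k$ to produce approximate Bergman projections and correct them to genuine ones; and (ii) the Bergman kernel $K_k(y,x)$ is concentrated at scale $1/\sqrt{k}$ near the diagonal, so $B_k(x_0)$ is determined, modulo $O(k^{-\infty})$, by the restriction of $\omega$ and $(\Sym^k E, \Sym^k h)$ to an arbitrarily small neighborhood of $x_0$. The real-analytic hypothesis is used to upgrade $L^2$-type approximations to $C^p$-type approximations with the claimed errors; in a purely $C^\infty$ setup one would only expect finitely many terms, but real-analyticity supplies the all-orders control implicit in the statement.

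For the local analysis, I would fix $x_0 \in X$, choose a holomorphic coordinate $z$ centered there with $\omega = \tfrac{\sqrt{-1}}{2}(1 + O(|z|^2))\,dz \wedge d\bar z$ (the correction encoding $\Scal_\omega(x_0)$), and pick a holomorphic frame $\{e_1,\dotsc,e_r\}$ of $E$ adapted to $h$ at $x_0$, i.e.\ $\langle e_i, e_j\rangle_h = \delta_{ij} + O(|z|^2)$, with the quadratic part expressing $F_h(x_0)$. In the induced frame $\{e^I := e_1^{i_1}\cdots e_r^{i_r}\}_{|I|=k}$ of $\Sym^k E$, the matrix of $\Sym^k h$ is computed by polarization and Taylor-expanded in $(z,\bar z)$; its quadratic part expresses $F_{\Sym^k h}(x_0)$ as an endomorphism-valued $(1,1)$-form. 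A near-orthonormal basis of $H^0(X, \Sym^k E)$ localized at $x_0$ is then assembled from monomial peak sections $z^\alpha e^I$ weighted by the Gaussian factor of this quadratic form; evaluating $\sum_{I, \alpha}|s_{I,\alpha}(x_0)|_{\Sym^k h}^2$ via Gaussian integrals yields an asymptotic series whose coefficients are sections of $\End(\Sym^k E)$.

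The leading integral produces $b_{k,0}(x_0) = \tfrac{\sqrt{-1}}{k}\Lambda F_{\Sym^k h}(x_0)$, essentially as the reciprocal of the determinant of the Gaussian quadratic form. The main obstacle will be the identification of $b_{k,1}$: it requires tracking three independent second-order corrections and combining them into a single intrinsic endomorphism. These are (a) the $O(|z|^2)$ correction to $\omega$, which produces the $\tfrac{1}{2}\Scal_\omega\,\Id$ summand; (b) the quartic part of $\log \Sym^k h$, producing the $-\tfrac{1}{2}\Lambda\Delta F_{\Sym^k h}(\Lambda F_{\Sym^k h})^{-1}$ summand; and (c) Gaussian third moments against the linear-in-$z$ part of the Taylor expansion of $\Sym^k h$, producing the wedge-product term. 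Because the rank $\binom{k+r-1}{r-1}$ of $\Sym^k E$ grows with $k$, the sum over multi-indices must be organized so that the final expression is manifestly frame-independent; the noncommutativity of $\Lambda F_{\Sym^k h}$ with its covariant derivatives forces the specific ordering of the $(\Lambda F_{\Sym^k h})^{-1}$ factors appearing in $b_{k,1}$, and verifying that the asserted closed-form expression actually emerges from the local sum---rather than merely a sum of matrix entries indexed by $I$---is the technical heart of the proof.
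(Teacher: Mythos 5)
Your plan is a genuinely different route from the paper's. The paper follows the Berman--Berndtsson--Sj\"ostrand template: it does not build a near-orthonormal basis of peak sections, but instead constructs an explicit local reproducing kernel of the form $\frac{1}{2\pi}\overline{e^{\mathfrak{s}^k\psi(x,\overline y)}b_k^{(N)}(x,\overline y)}$ by solving a hierarchy of power-series equations for a ``negligible amplitude'' $a_k^{(N)}$, then uses H\"ormander to show this kernel matches the true Bergman kernel to $O(k^{-N})$. Your approach is closer to the Tian/Lu peak-section method. Both are in principle viable, and the comparison is instructive: the BBS route gives a single closed-form approximate kernel whose coefficients $b_{k,m}$ are produced algorithmically, which is what makes the $b_{k,0},b_{k,1}$ formulas in the statement tractable; the peak-section route distributes the information across a growing family of sections indexed by $(\alpha,I)$ and then recombines it via Gaussian moments, which is combinatorially heavier when $\rk(\Sym^k E)=\binom{k+r-1}{r-1}$ grows with $k$.

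There is one genuine gap in your outline, and it is the step you treat as given: the claim that $B_k(x_0)$ is determined modulo $O(k^{-\infty})$ by the restriction to a small neighborhood, i.e.\ off-diagonal decay of $K_k$. In the scalar case this follows from the Calabi diastasis estimate $-\phi(x)+\psi(y,\overline x)-\phi(y)+\psi(x,\overline y)\le -\delta|x-y|^2$. In the higher-rank case this estimate is not a triviality: one has to show that the matrix $D(x,y)$ defined by $e^{-\phi(x)/2}e^{\psi(y,\overline x)}e^{-\phi(y)}e^{\psi(x,\overline y)}e^{-\phi(x)/2}=e^{D(x,y)}$ has all eigenvalues $\le -\delta|x-y|^2$, using Griffiths positivity, and then pass this through $\mathfrak{s}^k$ to get the uniform-in-$k$ bound $\|e^{-\mathfrak{s}^k\phi(y)}e^{\mathfrak{s}^k\psi(x,\overline y)}\|_{op}^2\le e^{-\delta k|x-y|^2}$. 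The noncommutativity means the holomorphic and antiholomorphic parts of $D$ do not cancel for free, and the paper devotes all of Section~2 (via the Baker--Campbell--Hausdorff series $Z^{(p)}$) to establishing exactly this. Your proposal presupposes the decay without addressing why it holds; without it, neither the localization nor the H\"ormander correction step closes.

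A second, softer concern: you correctly flag that the Gaussian-moment computation of $b_{k,1}$ must be organized so that the answer is frame-independent and correctly ordered in the noncommuting factors $\Lambda F_{\Sym^k h}$, $\nabla^{1,0}\Lambda F_{\Sym^k h}$, $\nabla^{0,1}\Lambda F_{\Sym^k h}$. But the Gaussian weight itself, $e^{-\mathfrak{s}^k(\phi(z,\overline z))}$, has a matrix-valued exponent whose quadratic part $\mathfrak{s}^k(\phi_{1\overline 1}(0))\,z\overline z$ does not commute with the cubic and quartic corrections $\mathfrak{s}^k(\phi_{2\overline 1}),\ \mathfrak{s}^k(\phi_{2\overline 2})$, etc. The ``Gaussian third moments'' you invoke are therefore moments of a matrix-weighted measure against noncommuting insertions, not ordinary Gaussian moments; making this precise, and also orthonormalizing the peak sections across the $k$-dependent family of $I$'s, is additional work your sketch does not provide. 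The paper sidesteps this by solving for $b_{k,m}$ directly from the transport equations \eqref{eq: power series}, which keeps every step manifestly matrix-algebraic.
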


If $E$ is a line bundle, the theorem above recovers the known asymptotic expansion of the Bergman function for powers of positive line bundles on curves (see Corollary \ref{cor: line bundle computation}). Additionally, as in the case for line bundles, the Bergman function can be used to recover the Riemann-Roch formula, up to an error term, for vector bundles over curves that admit a Griffiths-positive Hermitian metric (see Corollary \ref{cor: Riemann-Roch}).

If $E = L_1 \oplus L_2$ is a direct sum of two positive line bundles, then $\Sym^k E \cong \bigoplus_{a+b = k} L_1^a \otimes L_2^b$ and Theorem \ref{thm: Main} yields the following corollary (see Corollary \ref{cor: direct sum}).
\begin{cor}
Let $L_1$ and $L_2$ be line bundles with positive real analytic Hermitian metrics $h_1$ and $h_2$. For any $a,b \geq 0$, let $B_{a,b}$ denote the Bergman function associated to $(L_1^a \otimes L_2^b, h_1^a \otimes h_2^b)$ and $\omega$. Let $N$ and $p$ be any fixed nonnegative integer. Then, there exist smooth functions $b_{a,b,0}, \dotsc, b_{a,b,N}$ such that
\begin{equation*}
    B_{a,b}(x) = b_{a,b,0}(x)(a+b) + \dotsb + b_{a,b,N}(x) (a+b)^{1-N} + O \left ((a+b)^{-N} \right )
\end{equation*}
with respect to the $C^p$ norm. Furthermore,
\begin{equation*}
    b_{a,b,0}(x) = \frac{a}{a+b} \Lambda_\omega \omega_1 + \frac{b}{a+b} \Lambda_\omega \omega_2
\end{equation*}
and
\begin{equation*}
    b_{a,b,1}(x) = \Scal_\omega(x) -\frac{1}{2}  \left ( a \Lambda_\omega \omega_1 + b \Lambda_\omega \omega_2 \right ) \Scal_{a\omega_1 + b\omega_2}(x). 
\end{equation*}
where $\omega_j$ is $\sqrt{-1}$ times the curvature of $h_j$ for each $j=1,2$.
\end{cor}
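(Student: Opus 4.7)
The plan is to deduce the corollary directly from Theorem \ref{thm: Main} by exploiting the block structure that a direct sum induces on symmetric powers. Since $L_1$ and $L_2$ are positive, the diagonal metric $h = h_1 \oplus h_2$ on $E$ is Griffiths-positive, so the hypotheses of Theorem \ref{thm: Main} are satisfied. The canonical isomorphism $\Sym^k E \cong \bigoplus_{a+b=k} L_1^a \otimes L_2^b$ identifies $\Sym^k h$ with the orthogonal direct sum of the metrics $h_1^a \otimes h_2^b$, and consequently $H^0(X,\Sym^k E) = \bigoplus_{a+b=k} H^0(X,L_1^a\otimes L_2^b)$ orthogonally with respect to the $L^2$ inner product induced by $\omega$ and $\Sym^k h$. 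The Bergman endomorphism $B_k$ is therefore block-diagonal, with the $(a,b)$-block being the scalar-valued Bergman function $B_{a,b}$ of $(L_1^a\otimes L_2^b, h_1^a\otimes h_2^b)$. Restricting the expansion from Theorem \ref{thm: Main} to each block and setting $k = a+b$ yields an asymptotic expansion of $B_{a,b}$ of the required form, with the same $C^p$ error bounds.

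For the leading coefficient, on the $(a,b)$-block the curvature is $F_{\Sym^k h} = aF_{h_1}+bF_{h_2} = -\sqrt{-1}(a\omega_1+b\omega_2)$, so the universal formula of Theorem \ref{thm: Main} gives $b_{a,b,0} = \frac{1}{k}\Lambda_\omega(a\omega_1+b\omega_2) = \frac{a}{a+b}\Lambda_\omega\omega_1 + \frac{b}{a+b}\Lambda_\omega\omega_2$, as claimed. For $b_{a,b,1}$, I will abbreviate $\rho := a\Lambda_\omega\omega_1+b\Lambda_\omega\omega_2$, so that $\sqrt{-1}\Lambda F_{\Sym^k h}$ restricts on the $(a,b)$-block to the positive scalar $\rho$. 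The first term of $b_{k,1}$ then restricts to a multiple of $\Delta_\omega\rho/\rho$; the middle term is simply $\frac{1}{2}\Scal_\omega$; and the final term, using that $(\nabla^{1,0}\Lambda F)(\Lambda F)^{-1}$ and $(\nabla^{0,1}\Lambda F)(\Lambda F)^{-1}$ reduce on a line bundle to $\partial\log\rho$ and $\bar\partial\log\rho$ respectively, becomes a multiple of $\Lambda_\omega(\partial\rho\wedge\bar\partial\rho)/\rho^2$, i.e., of $|\partial\log\rho|^2_\omega$.

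To repackage these three scalar contributions into the stated expression $\Scal_\omega-\frac{1}{2}\rho\Scal_{\rho\omega}$, I will use that on a Riemann surface $\eta := a\omega_1+b\omega_2$ is pointwise a positive multiple of $\omega$, namely $\eta = \rho\omega$, so the conformal-change identity for scalar curvature gives $\rho\Scal_\eta = \Scal_\omega - \Delta_\omega\log\rho$. Combining this with $\Delta_\omega\log\rho = \Delta_\omega\rho/\rho - |\partial\log\rho|^2_\omega$ rewrites the three restricted terms exactly as $\Scal_\omega - \frac{1}{2}\rho\Scal_\eta$. The main obstacle is not the block reduction, which is purely formal, but the bookkeeping of constants in this last step: the precise coefficients in front of $\Delta_\omega\rho/\rho$, $\Scal_\omega$, and $|\partial\log\rho|^2_\omega$ depend on the normalizations adopted for $\Lambda_\omega$, $\Delta_\omega$, the Chern connection, and the scalar-curvature conformal-change formula, and verifying that they assemble into precisely $\Scal_\omega-\frac{1}{2}\rho\Scal_\eta$ with no leftover terms is the one place where a careful convention-dependent calculation is required.
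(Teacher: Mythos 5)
Your block-diagonal reduction is exactly the paper's, and your derivation of $b_{a,b,0}$ is correct. Your sketch for $b_{a,b,1}$ is also plausible, though the paper instead invokes its Corollary \ref{cor: line bundle computation}, which has already carried out the line-bundle reduction and conformal-change bookkeeping; citing it avoids the convention-dependent calculation you identify as the main obstacle.

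The genuine gap is in the claim that restricting Theorem \ref{thm: Main} to each block yields the expansion ``with the same $C^p$ error bounds.'' Theorem \ref{thm: Main} only gives a $C^p$ error of $O(k^{p-N})$, and only asserts $\|b_{k,i}\|_{C^p,op}=O(k^p)$; the extra factor of $k^p$ arises because each covariant derivative of a coefficient like $\mathfrak{s}^k(\eta)$ costs a factor of $k$. The corollary, however, asserts a $C^p$ error of $O((a+b)^{-N})$ --- an improvement of $k^p$ that does not follow by restriction to a block. To close this you must observe that, in the direct-sum case, the local coefficients $b_{k,i}(x,\overline{y})$ are diagonal matrices with scalar analytic entries bounded uniformly in $k$ (Theorem \ref{thm: bounds}), that the Chern connection on $\Sym^k(L_1\oplus L_2)$ is block-diagonal so the commutator terms in $\nabla_q\cdots\nabla_0 b_{k,i}$ vanish, and hence by Cauchy's integral formula $\|\rho\,b_{k,i}\|_{C^p,op}=O(1)$ rather than $O(k^p)$. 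One then applies the local expansion with $N+p$ terms (giving $C^p$ error $O(k^{-N})$) and discards the last $p$ terms, each now contributing only $O(k^{1-i})\le O(k^{-N})$. Without this argument the asserted $O((a+b)^{-N})$ bound is not justified.
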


Finally, if $h$ is a Hermitian-Einstein metric on $E$, then Theorem \ref{thm: Main} yields the following corollary (see Corollary \ref{cor: HE}).
\begin{cor}
Suppose that $h$ is a Hermitian-Einstein metric. Let $N$ and $p$ be fixed nonnegative integers. Then, there exist smooth functions $b_{1}(x), \dotsc, b_N(x)$, that do not depend on $k$, such that
\begin{equation*}
    B_k(x,x) = \left ( b_{0}(x) \Id_{\Sym^k E}\right )k + \dotsb + \left ( b_{N}(x) \Id_{\Sym^k E} \right ) k^{1-N} + O(k^{-N})
\end{equation*}
with respect to the $C^p$ norm. In particular,
\begin{equation*}
b_{0}(x) = c
\end{equation*}
where $c\Id_E = \sqrt{-1} \Lambda F_h$ and 
\begin{equation*}
b_{1}(x) = \frac{1}{2}\Scal_\omega(x).
\end{equation*}
\end{cor}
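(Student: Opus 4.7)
The plan is to apply Theorem~\ref{thm: Main} and exploit the rigidity that the Hermitian--Einstein condition imposes on all the curvature data entering the formulas for the $b_{k,i}$. The HE condition $\sqrt{-1}\,\Lambda F_h = c\,\Id_E$, combined with $\dim_{\mathbb{C}} X = 1$ (so that the space of $(1,1)$-forms at each point is one-dimensional, spanned by $\omega$), forces $F_h = -\sqrt{-1}\,c\,\omega\,\Id_E$ as an $\End(E)$-valued $(1,1)$-form on $X$. Applying the Leibniz rule for the induced curvature on symmetric powers then yields $F_{\Sym^k h} = -\sqrt{-1}\,kc\,\omega\,\Id_{\Sym^k E}$, and in particular $\sqrt{-1}\,\Lambda F_{\Sym^k h} = kc\,\Id_{\Sym^k E}$.

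Substituting into the explicit expression from Theorem~\ref{thm: Main} for the leading coefficient gives $b_{k,0} = \frac{\sqrt{-1}}{k}\Lambda F_{\Sym^k h} = c\,\Id_{\Sym^k E}$, matching the asserted $b_0(x) = c$. For $b_{k,1}$, the decisive observation is that $\Lambda F_{\Sym^k h}$ is a \emph{constant} scalar multiple of the parallel endomorphism $\Id_{\Sym^k E}$. Consequently $\nabla^{1,0}\Lambda F_{\Sym^k h} = \nabla^{0,1}\Lambda F_{\Sym^k h} = 0$, so the wedge term in the formula for $b_{k,1}$ vanishes; likewise $\Lambda\Delta F_{\Sym^k h}$ reduces to $\Delta(\Lambda F_{\Sym^k h}) = 0$, so the Laplacian term vanishes. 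Only the scalar-curvature term survives, yielding $b_{k,1} = \frac{1}{2}\Scal_\omega\,\Id_{\Sym^k E}$ and hence $b_1(x) = \frac{1}{2}\Scal_\omega(x)$.

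For the higher coefficients I would unpack the construction of the $b_{k,i}$ used in the proof of Theorem~\ref{thm: Main}: each is a universal expression in $F_{\Sym^k h}$, $(\Lambda F_{\Sym^k h})^{-1}$, $\omega$, $\Scal_\omega$, and their covariant derivatives. In the HE setting both $F_{\Sym^k h}$ and $\omega$ are parallel for the Chern connection, so every term containing a covariant derivative of $F_{\Sym^k h}$ drops out, and the surviving contributions are polynomials in which every factor of $F_{\Sym^k h}$ (contributing a power of $k$) is paired with a factor of $(\Lambda F_{\Sym^k h})^{-1}$ (contributing a power of $k^{-1}$). These powers of $k$ cancel, leaving a scalar function of $x$ times $\Id_{\Sym^k E}$ that is independent of $k$. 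The main obstacle is therefore not conceptual but a matter of bookkeeping inside the construction of Theorem~\ref{thm: Main}: verifying at each order that the $k$-powers indeed balance, so that each $b_{k,i}$ really does descend to a $k$-independent scalar coefficient $b_i(x)$.
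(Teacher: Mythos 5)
The computation of $b_0$ and $b_1$ is correct and runs parallel to the paper's: since $\dim X=1$, the HE condition forces $F_h$ to be a constant scalar multiple of $\omega\,\Id_E$, so $\Lambda F_{\Sym^k h}=-\sqrt{-1}\,kc\,\Id_{\Sym^k E}$ is parallel, which kills both the $\Delta$-term and the wedge term in the formula from Theorem~\ref{thm: Main}. (The paper packages the same vanishing via $\overline{\partial}^*F=\sqrt{-1}\,\nabla^{1,0}(\Lambda F)=0$ and the rewritten formula $b_{k,1}=-\tfrac12\Lambda\bigl(\overline{\partial}((\Lambda F)^{-1}\overline{\partial}^*F)\bigr)+\tfrac12\Scal_\omega$, but the content is identical.)

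The real gap is in the coefficients of order $i\geq 2$ and in the $C^p$ error bound. You assert that ``each $b_{k,i}$ is a universal expression in $F_{\Sym^k h}$, $(\Lambda F_{\Sym^k h})^{-1}$, $\omega$, $\Scal_\omega$, and their covariant derivatives,'' but the paper never establishes such a universal form for $i\geq 2$; Theorem~\ref{thm: Main} only gives the formulas for $b_{k,0}$ and $b_{k,1}$. Without that structural result, there is nothing to evaluate on. The paper instead returns to the recursive construction of the local coefficients $a_{k,m}$, $A_{k,m}$, $b_{k,m}$ from Section~5 and carries out a genuine induction: it shows that in the HE case the function $\tilde F$ is scalar, hence that the relevant Taylor coefficients of $\mathfrak{s}^k(\tau)$ have the form $k\,(\text{scalar})\cdot\Id$, and then propagates the conclusion ``$b_{k,i}=b_i(x,z)\Id_{\Sym^k E}$ with $b_i$ independent of $k$'' through the recursion. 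This is the missing step that makes the $k$-independence actually true, rather than plausible. A second, separate issue you do not address: Theorem~\ref{thm: Main} gives an error $O(k^{p-N})$ in $C^p$ with coefficients whose $C^p$ norms are a priori $O(k^p)$, whereas the Corollary claims an $O(k^{-N})$ error and $k$-independent coefficients. The paper closes this gap by first applying the expansion with $N+p$ terms and then using the scalar/parallel form of each $b_{k,i}$ together with Cauchy's integral formula to show $\|\rho\,b_{k,i}\|_{C^p,op}=O(1)$, so the extra $p$ terms are absorbed into the error. Your proposal would need both pieces --- the induction on the recursive construction and the $C^p$-norm improvement --- to be a complete proof.
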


To prove Theorem \ref{thm: Main}, we will generalize the phase and negligible amplitudes from \cite{BBS} to construct local reproducing kernels for vector bundles using power series methods. Then, we will use H\"ormander estimates to show that the local reproducing kernels glue together to give the Bergman kernel. The proof of the formulas for $b_{k,0}(x)$ and $b_{k,1}(x)$ will be given in Appendix B.

\begin{ack*}
I would like to thank my advisor Julius Ross for many useful discussions. I would also like to thank Julien Keller for his significant contributions to an early version of this work. Finally, I would like to thank Ruadha\'i Dervan, Julien Keller, Nicholas McCleerey, and Annamaria Ortu for helpful conversations. This work received partial support from DMS 1749447.
\end{ack*}

\section{The Diastatic Function}

Let $(X,\omega)$ be a compact Riemann surface and let $(E,h)$ be a holomorphic and Hermitian vector bundle over $X$. Suppose that $\omega$ and $h$ are real analytic and that $h$ is Griffiths-positive. Fix a point $x_0 \in X$. Let $e^{-\phi}$ be a local representation of the Hermitian metric $h$ with respect to a holomorphic normal frame of $E$ centered at $x_0$ on a coordinate neighborhood $U$. After replacing $U$ with a sufficiently small coordinate unit disk centered at $x_0$, we can write
$$
\phi(x) = \sum_{\alpha,\beta > 0} \frac{1}{\alpha! \beta!} \frac{\partial^{\alpha + \beta} \phi}{\partial x^\alpha \partial \overline{x}^\beta} (0) x^\alpha \overline{x}^\beta.
$$
We define $\psi : U \times U \to M_{r \times r}(\mathbb{C})$ by
\begin{equation} \label{eq : psi}
\psi(y,z) =  \sum_{\alpha,\beta > 0} \frac{1}{\alpha!\beta!} 
 \frac{\partial^{\alpha + \beta} \phi}{\partial x^\alpha \partial \overline{x}^\beta} (0) y^\alpha z^\beta.
\end{equation}
In the case when $(E,h)$ is a line bundle, it is not hard to show that the so called \emph{diastatic function} 
\begin{equation} \label{eq: diastatic}
-\phi(x) + \psi(y,\overline{x}) - \phi(y) + \psi(x,\overline{y}) 
\end{equation}
is independent of the chosen frame and coordinates (\cite{C}, Chapter 2, Proposition 1). In \cite{BBS}, the estimate
\begin{equation} \label{eq: estimate}
-\phi(x) + \psi(y,\overline{x}) - \phi(y) + \psi(x,\overline{y})  \leq -\delta|x-y|^2 
\end{equation}
was used in a crucial way to show that the Bergman kernel for a positive Hermitian line bundle admits an asymptotic expansion. In this section, we will generalize \eqref{eq: estimate} to the case when $(E,h)$ is a Griffiths-positive vector bundle.

Let $p$ be a positive integer. For each $i \in \{1, \dotsc, p\}$, let $X_i$ be a variable representing $r \times r$ matrices. We write $X_i = (X_{i,jk})_{1\leq j,k \leq r}$, where $X_{i,jk}$ denotes the coordinate function corresponding to the $j,k$-th entry of $X_i$. Define $Z^{(p)}(X_1, \dotsc, X_p) \in \mathbb{C} [[X_{1,11}, \dotsc, X_{p,rr}]]$ by 
\begin{equation}
    Z^{(p)}(X_1, \dotsc, X_p) = \log \left ( e^{X_1} \dotsb e^{X_p} \right ).
\end{equation}
If $X_i$ is close to the origin, then $e^{X_i}$ is close the to the identity matrix. Since compositions of analytic functions are analytic, there exists an open neighborhood $V \subseteq M_{r \times r} (\mathbb{C})$ of the origin such that the series $Z^{(p)}$ converges absolutely on $V^{\times p}$. In other words, $Z^{(p)} : V^{\times p} \to M_{r \times r}(\mathbb{C})$ is an analytic function.

We record some facts about real analytic matrix-valued functions.

\begin{defn}
Let $U$ be an open subset of $\mathbb{C}^n$ and let $\phi : U \to M_{r \times r}(\mathbb{C})$ be a matrix-valued function. If $x \in U$ and $M,\rho$ are positive real numbers, we say that $\phi \in C_{op,M,\rho}(x)$ if each entry of $\phi$ is smooth in a neighborhood of $x$ and
\begin{equation}
\left \| \partial^\alpha \overline{\partial}^\beta \phi (x) \right \|_{op} \leq M |\alpha + \beta|! \rho^{-|\alpha + \beta|} \label{eq:coeffs}
\end{equation}
for all $\alpha,\beta \in (\mathbb{Z}_{\geq 0})^n$.
\end{defn}

Let $U$ be an open subset of $\mathbb{C}^n$ and let $\phi : U \to M_{r\times r}(\mathbb{C})$ be a smooth matrix-valued function defined on $U$. The proofs of the real variable versions of the following facts can be found in Chapter 3 of \cite{J}.

\begin{itemize}
    \item $\phi$ is real analytic if and only if for every compact subset $S \subseteq U$, we can find $M$ and $\rho$ such that $\phi \in C_{op,M,\rho}(x)$ for all $x \in S$.  
    \item Let $x \in U$ and $\rho > 0$. Suppose that $\phi(y) = \sum_{\alpha,\beta \geq 0} c_{\alpha\beta} (y-x)^\alpha (\overline{y} - \overline{x})^\beta$ for all $y$ in the polydisc of radius $\rho$ centered at $x$ and $\sum_{\alpha,\beta \geq 0} \| c_{\alpha\beta} \|_{op} \rho^{|\alpha + \beta|} < \infty$. Then, $\phi$ is real analytic on the polydisc of radius $\rho$ centered at $x$ and $c_{\alpha\beta} = \frac{1}{\alpha! \beta!} \partial^\alpha \overline{\partial}^\beta \phi(x)$ for all $\alpha,\beta \in (\mathbb{Z}_{\geq 0})^n$.
    \item Suppose that $\phi$ is real analytic and let $x \in U$ be given. Then, there exists $\rho > 0$ small enough so that, for any $y$ and $z$ in the polydisc of radius $\rho$ centered at $x$, we can write \begin{equation} \label{eq:series expansion}
    \phi(z) = \sum_{\alpha,\beta \geq 0} \frac{1}{\alpha!\beta!} \partial^\alpha \overline{\partial}^\beta \phi(y) (z-y)^\alpha (\overline{z}-\overline{y})^\beta.
    \end{equation}
\end{itemize}.

For vector bundles of higher rank, we will generalize the function \eqref{eq: diastatic} as follows.  
\begin{defn}
We call the matrix valued function $D(x,y)$ such that
$$
e^{-\phi(x)/2} e^{\psi(y,\overline{x})} e^{-\phi(y)} e^{\psi(x,\overline{y})} e^{-\phi(x)/2} = e^{D(x,y)}
$$
the \emph{diastatic function} with respect to $\phi$.
\end{defn}

Observe that when $E$ is a line bundle, $D(x,y)$ agrees with the diastatic function in \eqref{eq: diastatic}. The main point of this work is to deal with the additional difficulties coming from the fact that the diastatic function does not have a simple expression in terms of $\phi$ and $\psi$.

\begin{lem} \label{lem: diastatic}
There exists a coordinate unit disk $U$ centered at $x_0$ such that $D: U \times U \to M_{r \times r}(\mathbb{C})$ is real analytic and $D(x,y)$ has a power series expansion of the form
$$
D(x,y) = \sum_{\alpha,\beta \geq 1} D_{\alpha \overline{\beta}}(x) (y-x)^\alpha (\overline{y} - \overline{x})^\beta
$$
for all $x,y \in U$. Moreover, for any $\alpha,\beta \in \mathbb{Z}_{\geq 0}$,
$$
D_{\alpha\overline{\beta}}(x) = \frac{1}{\alpha! \beta!} {\partial_2}^\alpha {\overline{\partial}_2}^\beta D(x,x)
$$
is real analytic.
\end{lem}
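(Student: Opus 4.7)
The plan is to exploit the \emph{complexification} of the real analytic matrix $\phi$. Because $\phi$ is written in a holomorphic normal frame at $x_0 = 0$, its Taylor expansion omits the constant term and all pure holomorphic and pure anti-holomorphic terms, so the two-variable series $\psi(y,z)$ coincides with the complexification $\tilde\phi(y,z)$ of $\phi$ (the holomorphic function obtained by replacing $\overline{x}$ with an independent variable $z$). With this identification every factor in
$$
P(x,y) = e^{-\phi(x)/2} e^{\psi(y,\overline{x})} e^{-\phi(y)} e^{\psi(x,\overline{y})} e^{-\phi(x)/2}
$$
is expressible in terms of $\psi$ alone, so replacing $\overline{x},\overline{y}$ by independent variables $u,v$ produces a holomorphic matrix-valued function $\tilde P(x,u,y,v)$ on a polydisc about the origin in $\mathbb{C}^4$.

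Next I would shrink $U$ so that $\tilde P$ takes values in a neighborhood of the identity on which the principal matrix logarithm is holomorphic; this is possible because $\phi(0)=0$ and $\psi$ vanishes whenever either argument is zero, so $\tilde P(0,0,0,0)=I$. Then $\tilde D := \log \tilde P$ is a well-defined holomorphic $M_{r\times r}(\mathbb{C})$-valued function on a polydisc in $\mathbb{C}^4$, and its restriction $D(x,y) = \tilde D(x,\overline{x},y,\overline{y})$ is real analytic on $U \times U$.

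The vanishing assertion is the heart of the proof and comes from two direct cancellations in $\tilde P$. Setting $v=u$ yields
$$
\tilde P(x,u,y,u) = e^{-\psi(x,u)/2} e^{\psi(y,u)} e^{-\psi(y,u)} e^{\psi(x,u)} e^{-\psi(x,u)/2} = I,
$$
where the middle pair annihilates and the three remaining factors have proportional (hence commuting) exponents summing to zero. Setting $y=x$ yields
$$
\tilde P(x,u,x,v) = e^{-\psi(x,u)/2} e^{\psi(x,u)} e^{-\psi(x,v)} e^{\psi(x,v)} e^{-\psi(x,u)/2} = I
$$
by the same reasoning. Thus $\tilde D$ vanishes on both hypersurfaces $\{y=x\}$ and $\{v=u\}$. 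Expanding $\tilde D(x,u,x+\eta,u+\mu) = \sum_{\alpha,\beta \geq 0} \tilde D_{\alpha\overline{\beta}}(x,u)\, \eta^\alpha \mu^\beta$ with holomorphic coefficients, these two vanishings force $\tilde D_{\alpha\overline{0}} \equiv 0$ and $\tilde D_{0\overline{\beta}} \equiv 0$. Specializing to $u = \overline{x}$ and $\mu = \overline{\eta}$ then gives the claimed expansion of $D(x,y)$ with $\alpha,\beta \geq 1$, valid on all of $U \times U$ after one further shrinking so that the diameter of $U$ lies inside the radius of uniform convergence supplied by Cauchy estimates on $\tilde D$.

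Finally, real analyticity of each coefficient is immediate: $D_{\alpha\overline{\beta}}(x) = \tilde D_{\alpha\overline{\beta}}(x,\overline{x})$ is the composition of a holomorphic function with the real analytic embedding $x \mapsto (x,\overline{x})$, or equivalently $\frac{1}{\alpha!\beta!}\partial_2^\alpha \overline{\partial}_2^\beta D(x,y)\big|_{y=x}$ is the restriction to the diagonal of a real analytic function of $(x,y)$. The only delicate point I anticipate is not the vanishing (which is algebraic) but the uniform bookkeeping required to carry out the holomorphic extension, the matrix logarithm, and the power series expansion simultaneously on a single polydisc; this is why $U$ must be chosen only after all of these constructions are in place.
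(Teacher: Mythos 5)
Your proof is correct and follows essentially the same route as the paper's: both rest on the observation that $\psi$ is the holomorphic extension of $\phi$ (your complexification) together with the algebraic telescoping of $e^{-\phi(x)/2}e^{\psi(y,\overline{x})}e^{-\phi(y)}e^{\psi(x,\overline{y})}e^{-\phi(x)/2}$ to the identity once either $y=x$ or $\overline{y}=\overline{x}$ is imposed. The paper packages this as a formal power-series computation---discarding all monomials carrying a factor of $(\overline{y}-\overline{x})$, which eliminates $H(y)^*$ and $M(y,\overline{y})$, and observing that the remaining product is $\mathrm{Id}$---while you complexify to $\tilde P$ on a polydisc in $\mathbb{C}^4$ and show $\tilde P\equiv I$ on the two hypersurfaces $\{y=x\}$ and $\{v=u\}$, forcing the pure holomorphic and antiholomorphic coefficients of $\tilde D=\log\tilde P$ to vanish. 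This is the same cancellation in slightly cleaner geometric dress; no new ideas or estimates are needed.
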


\begin{proof}

Let $U$ be a sufficiently small coordinate unit disk centered at $x_0$ such that $h = e^{-\phi}$ and define $\psi : U \times U \to M_{r \times r}(\mathbb{C})$ by \eqref{eq : psi}. We claim that $\psi$ is the unique matrix-valued function such that $\psi$ is holomorphic on $U \times U$ and $\psi(x,\overline{x}) = \phi(x)$ for any $x \in U$. To see this, suppose that $\psi'$ is another such function. Define $T : \mathbb{C}^2 \to \mathbb{C}^2$ by $T(x,y) = (x+iy,x-iy)$. Then, $(\psi - \psi')\circ T$ is holomorphic in $T^{-1}(U \times U)$ and vanishes when restricted to $\mathbb{R}\times \mathbb{R} \cap T^{-1}(U \times U)$. Because the series expansion of $(\psi - \psi')\circ T$ centered at any point in $\mathbb{R} \times \mathbb{R} \cap T^{-1}(U \times U)$ is zero, $(\psi - \psi') \circ T$ must vanish on an open set containing $\mathbb{R}\times \mathbb{R} \cap T^{-1}(U \times U)$. Then, $\psi - \psi'$ must be identically zero since $U \times U$ is connected. This proves our claim.

By shrinking $U$ if necessary, we may assume that $\phi$ and $D$ have power series expansions
\begin{equation*}
\phi(y) = \sum_{\alpha,\beta \geq 0} \phi_{\alpha \overline{\beta}}(x) (y-x)^\alpha (\overline{y} - \overline{x})^\beta \hspace{10pt} \text{ and } \hspace{10pt} D(x,y) = \sum_{\alpha,\beta \geq 0} D_{\alpha \overline{\beta}}(x) (y-x)^\alpha (\overline{y} - \overline{x})^\beta
\end{equation*}
for all $x,y \in U$ where $\phi_{\alpha\overline{\beta}}(x) = \frac{1}{\alpha! \beta!} \partial^\alpha \overline{\partial}^\beta \phi(x)$ and $D_{\alpha\overline{\beta}}(x) = \frac{1}{\alpha! \beta!} {\partial_2}^\alpha {\overline{\partial}_2}^\beta D(x,x)$. Furthermore, our claim implies that
\begin{equation*}
\psi(y,z) = \sum_{\alpha,\beta \geq 0} \phi_{\alpha \overline{\beta}}(x) (y-x)^\alpha (z - \overline{x})^\beta
\end{equation*}
for any $x,y,z \in U$. Set
$$
H(y)= \sum_{\alpha > 0} \phi_{\alpha0}(x) (y-x)^\alpha \hspace{10pt} \text{ and } \hspace{10pt} 
M(y,\overline{y}) = \sum_{\alpha,\beta > 0} \phi_{\alpha \overline{\beta}}(x) (y-x)^\alpha (\overline{y} - \overline{x})^\beta. 
$$
Then,
\begin{align*}
& \phi(y) = \phi(x) + H(y) + H(y)^* + M(y,\overline{y}), \\
& \psi(y,\overline{x}) = \phi(x) + H(y), \text{ and } \\
& \psi(x,\overline{y}) = \phi(x) + H(y)^*.
\end{align*}
The power series expansion of $D(x,y)$ is obtained by formally composing the power series expansion of $Z^{(5)}$ and the power series expansions of $-\phi(x)/2$, $-\phi(y)$, $\psi(y,\overline{x})$, and $\psi(x,\overline{y})$. It follows that any contribution from $H(y)^*$ or $M(y,\overline{y})$ must have a factor of $(\overline{y}-\overline{x})$. So, the holomorphic part $\sum_{\alpha \geq 0} D_{\alpha,\overline{0}}(x)(y-x)^\alpha$ of $D(x,y)$ is precisely
$$
\log \left ( e^{-\phi(x)/2} e^{\phi(x) + H(y)}e^{-\phi(x) - H(y)}e^{\phi(x)}e^{-\phi(x)/2} \right ) = \log(\Id) = 0.
$$
Similarly, the antiholomorphic part is
$$
\log \left (  e^{-\phi(x)/2} e^{\phi(x)} e^{-\phi(x) - H(y)^*} e^{\phi(x) + H(y)^*} e^{-\phi(x)/2} \right ) = 0.
$$

\end{proof}

In the special case when $(E,h)$ is a holomorphic line bundle with an analytic metric, it was shown in \cite{C} that the diastatic function $D(x,y)$ depends only on the curvature of $h$. Moreover, \eqref{eq: estimate} is a direct consequence of the positivity of the line bundle $(E,h)$. As the lemma below shows, an estimate similar to \eqref{eq: estimate} holds when $E$ has higher rank and $h$ is Griffiths-positive.

\begin{prop} \label{prop: main estimate 1}
For a sufficiently small neighborhood $U$ of $x_0$ and any $x,y \in U$, any eigenvalue $\lambda(x,y)$ of the Hermitian matrix $D(x,y)$ satisfies
$$
\lambda (x,y) \leq -\delta|x-y|^2,
$$ 
for some positive constant $\delta > 0$.
\end{prop}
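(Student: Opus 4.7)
The plan is to use Lemma \ref{lem: diastatic} to reduce the eigenvalue estimate to a statement about the leading Taylor coefficient $D_{1\overline{1}}(x)$, to identify this coefficient at $x_0$ with the negative of the curvature matrix, and to finish by Griffiths-positivity together with a continuity-and-perturbation argument.

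I would first observe that
\[
A(x,y) := e^{-\phi(x)/2} e^{\psi(y,\overline{x})} e^{-\phi(y)} e^{\psi(x,\overline{y})} e^{-\phi(x)/2}
\]
is Hermitian: the Hermiticity of $\phi$ forces $\phi_{\alpha\overline{\beta}}(0)^* = \phi_{\beta\overline{\alpha}}(0)$, from which the identity $\psi(x,\overline{y})^* = \psi(y,\overline{x})$ follows by inspecting the defining series \eqref{eq : psi}, and then reversing the order of factors in $A(x,y)^*$ recovers $A(x,y)$. After shrinking $U$ so that $A(x,y)$ remains within a small neighborhood of $\Id$ for all $(x,y) \in U \times U$, its spectrum is positive and $D(x,y) = \log A(x,y)$ (principal branch) is Hermitian, so in particular it has real eigenvalues. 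Lemma \ref{lem: diastatic} then lets me write
\[
D(x,y) = D_{1\overline{1}}(x)(y-x)(\overline{y}-\overline{x}) + R(x,y),
\]
where $R$ collects all terms with $\alpha + \beta \geq 3$ and $\alpha,\beta \geq 1$; real-analyticity of the coefficients together with compactness yields a uniform bound $\|R(x,y)\|_{op} \leq C|y-x|^3$ on $U \times U$.

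The crux is the explicit evaluation of $D_{1\overline{1}}(x_0)$. In the holomorphic normal frame one has $\phi(x_0) = 0$, while the series definition of $\psi$ contains only terms with $\alpha, \beta \geq 1$, so $\psi(y,\overline{x_0}) = \psi(y,0) = 0$ and $\psi(x_0,\overline{y}) = \psi(0,\overline{y}) = 0$. Hence $A(x_0,y) = e^{-\phi(y)}$ and $D(x_0,y) = -\phi(y)$, which gives
\[
D_{1\overline{1}}(x_0) = -\phi_{1\overline{1}}(x_0).
\]
Griffiths-positivity of $h$ on the Riemann surface $X$ is exactly the statement that the Hermitian matrix $\phi_{1\overline{1}}(x_0)$, read in the normal frame, is positive-definite. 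Therefore $D_{1\overline{1}}(x_0)$ is negative-definite, with all eigenvalues bounded above by some $-2c < 0$.

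By continuity (in fact real-analyticity, from Lemma \ref{lem: diastatic}) of $D_{1\overline{1}}$, shrinking $U$ further ensures that every eigenvalue of $D_{1\overline{1}}(x)$ stays below $-c$ uniformly for $x \in U$. For any unit vector $v$,
\[
v^* D(x,y) v \leq -c|y-x|^2 + C|y-x|^3,
\]
which is at most $-(c/2)|y-x|^2$ once $U$ is shrunk so that $|y-x| \leq c/(2C)$ on $U \times U$. Taking $\delta := c/2$ then gives the required bound on every eigenvalue of the Hermitian matrix $D(x,y)$. The main obstacle I anticipate is the clean identification $D_{1\overline{1}}(x_0) = -\phi_{1\overline{1}}(x_0)$: it works precisely because the normal-frame vanishing $\phi(x_0) = 0$, together with the series convention for $\psi$, collapses $A(x_0,y)$ to the single exponential $e^{-\phi(y)}$; at other points $x$, or without the normal-frame condition, the leading Taylor coefficient would acquire Baker--Campbell--Hausdorff commutator corrections which one would have to compute and show remain negative-definite, and I rely instead on continuity to transport the estimate off of $x_0$.
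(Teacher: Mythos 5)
Your proof is correct, and it takes a genuinely different (and somewhat lighter) route than the paper's. The paper computes $D_{1\overline{1}}(x)$ at \emph{every} $x$ by differentiating the relation $e^{D(x,y)} = e^{-\phi(x)/2}e^{\psi(y,\overline{x})}e^{-\phi(y)}e^{\psi(x,\overline{y})}e^{-\phi(x)/2}$ with $\partial_2\overline{\partial}_2$ at $y=x$, carrying out the Leibniz-rule bookkeeping, and identifying the result with the conjugated curvature $h^{-1/2}F_h h^{1/2}(x)$; Griffiths-positivity is then invoked pointwise for all $x$ in the shrunk disk. You instead evaluate only at the base point $x_0$, where the normal-frame normalization $\phi(x_0)=0$ together with the vanishing $\psi(\cdot,0)=\psi(0,\cdot)=0$ (both built into the series conventions \eqref{eq : psi}) collapses the product to $e^{-\phi(y)}$, so that $D_{1\overline{1}}(x_0)=-\phi_{1\overline{1}}(x_0)$ is read off with no Leibniz calculation at all; Griffiths-positivity is invoked only at $x_0$, and the estimate is transported to a neighborhood via the continuity of $D_{1\overline{1}}$ supplied by Lemma \ref{lem: diastatic}. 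Both arguments then beat the uniformly cubic remainder by shrinking $U$ (you cite real-analyticity and compactness, the paper cites the explicit $C_{op,M,5}(x,x)$ bound; these are equivalent in substance). Your route is shorter and cleaner for the statement as posed, since only a neighborhood of the fixed $x_0$ is required; the trade-off is that the paper's direct identification $D_{1\overline{1}}(x)=-h^{-1/2}\tilde F h^{1/2}(x)$ at every $x$ is more explicit and is the kind of pointwise curvature identification that recurs throughout the later computations. One small point worth spelling out in your writeup: the hermiticity $D(x,y)^*=D(x,y)$ is asserted in the statement and you verify it from $\psi(x,\overline{y})^*=\psi(y,\overline{x})$; that verification is correct and is worth keeping, since the paper does not record it.
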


\begin{proof}
By Lemma \ref{lem: diastatic}, we know that the holomorphic and the antiholomorphic parts of $D(x,y)$ are zero. In particular, the second order term in the power series expansion of $e^{D(x,y)}$ is $D_{1\overline{1}}(x)(y-x)(\overline{y} - \overline{x})$. We can compute the second order terms of $D(x,y)$ directly.
\begin{align*}
D_{1\overline{1}}(x) 
    & = \partial_2 \overline{\partial}_2 \left ( e^{D(x,y)} \right ) \mid_{y = x} \\
    & = \partial_2 \overline{\partial}_2 \left ( e^{-\phi(x)/2} e^{\psi(y,\overline{x})} e^{-\phi(y)} e^{\psi(x,\overline{y})} e^{-\phi(x)/2} \right ) \mid_{y=x} \\
    & = e^{-\phi(x)/2} \partial_1 (e^{\psi(y,\overline{x})}) \overline{\partial}_1 (e^{-\phi(y)}) e^{\psi(x,\overline{y})} e^{-\phi(x)/2}  \mid_{y=x} \\
    & \hspace{10pt} +  e^{-\phi(x)/2} e^{\psi(y,\overline{x})} \partial_1 ( \overline{\partial}_1 (e^{-\phi(y)}) )e^{\psi(x,\overline{y})} e^{-\phi(x)/2}  \mid_{y=x}\\
    & \hspace{10pt} +  e^{-\phi(x)/2} \partial_1(e^{\psi(y,\overline{x})}) e^{-\phi(y)} \overline{\partial}_2 (e^{\psi(x,\overline{y})}) e^{-\phi(x)/2} \mid_{y=x} \\
    & \hspace{10pt} +  e^{-\phi(x)/2} e^{\psi(y,\overline{x})} \partial_1(e^{-\phi(y)}) \overline{\partial}_2 (e^{\psi(x,\overline{y})}) e^{-\phi(x)/2} \mid_{y=x} \\
    & = h^{1/2} (\partial_1(h^{-1})) (\overline{\partial}_1 h) h^{-1}h^{1/2}(x) \\
    & \hspace{10pt} + h^{-1/2} (\partial_1\overline{\partial}_1 h) h^{-1} h^{1/2} (x) \\
    & \hspace{10pt} + h^{1/2} (\partial_1(h^{-1}))h(\overline{\partial}_1(h^{-1}))h^{1/2}(x) \\
    & \hspace{10pt} + h^{-1/2} (\partial_1(h)) (\overline{\partial_1}(h^{-1}))h^{1/2}(x) \\
    & = h^{-1/2} (\overline{\partial}_1 ((\partial_1 h)h^{-1})) h^{1/2}(x).
\end{align*}
Then, 
$$
D_{1\overline{1}}(x) \,d\overline{y} \wedge dy = h^{-1/2} (\overline{\partial} ((\partial h)h^{-1})) h^{1/2}(x) = h^{-1/2}(x) F_{h} (x) h^{1/2}(x)
$$
is the curvature of $h$ with respect to the orthonormal frame given by the rows of $h^{-1/2}(y)$. 

We write $F_h = \tilde{F}  dy \wedge d\overline{y}$. Then,
\begin{align*}
D_{1\overline{1}}(x) (y-x)(\overline{y} - \overline{x}) & = D_{1\overline{1}}(x) \,dy \wedge d\overline{y} \left ( (y-x) \tanv{y}, (\overline{y} - \overline{x}) \tanv{\overline{y}}) \right ) \\
    & = - h^{-1/2}(x)  \tilde{F}  (x)  h^{1/2}(x) (y-x)(\overline{y}-\overline{x}).
\end{align*}
Our assumption that $h$ is Griffiths-positive implies that any eigenvalue of $h^{-1/2}(x)  \tilde{F}(x)  \overline{h}^{1/2}(x)$ is positive. We replace $U$ with a smaller coordinate unit disk such that there exists $M > 0$ with $D \in C_{op,M,5}(x,x)$ for all $x \in U$ and $U$ is compactly contained in another coordinate neighborhood. Then, there exists a constant $\delta > 0$ such that any eigenvalue $\lambda(x,y)$ of $D(x,y)$ must satisfy
$$
\lambda(x,y) \leq  -\delta|x-y|^2 + O(|x-y|^3)
$$
for all $x,y \in U$. By replacing $U$ with a smaller coordinate unit disk and $\delta$ by a smaller positive constant if necessary, we can ensure that
$$
\lambda(x,y) \leq -\delta|x-y|^2
$$
for all $x,y \in U$.
\end{proof}

We introduce some notation. Let $V_1$ and $V_2$ be complex vector spaces equipped with Hermitian inner products $h_1$ and $h_2$, respectively, and let $A : V_1 \to V_2$ be a homomorphism. Recall that the operator norm of $A$ is defined by
\begin{equation*}
\| A \|_{op(V_1,V_2)}^2 := \sup_{v \in V_1, h_1(v,v) = 1} h_2(A(v),A(v)).
\end{equation*}
If $V_1 = \mathbb{C}^{d_1}$, $V_2 = \mathbb{C}^{d_2}$, and $h_1$ and $h_2$ are the standard Hermitian inner products, then we will simply write $\|A\|_{op}$ for the operator norm of $A$.

Now, let $k \in \mathbb{N}$. For any complex vector space $V$, let $\mathfrak{s}^k : \End(V) \to \End(\Sym^k V)$ denote the Lie algebra map corresponding to the homomorphism $\Sym^k : \Aut(V) \to \Aut(\Sym^k V)$. Suppose that $h$ is a Hermitian metric on $V$. Then we denote by $\Sym^k h$ the Hermitian metric on $\Sym^k V$ defined by 
\begin{equation*}
    \Sym^k h( v_1 \dotsb v_k, w_1 \dotsb w_k) = \sum_{\sigma \in S_k} h(v_1,w_{\sigma(1)}) \dotsb h(v_k,w_{\sigma(k)}).
\end{equation*}

Let $\{e_1, \dotsc, e_r\}$ be a local frame of $E$ on $U$. We trivialize $\Sym^k E$ on $U$ using the local frame
\begin{equation*}
\left\{ \frac{e_{1}^{n_1} \dotsb e_{r}^{n_r}}{\sqrt{n_1 !} \dotsb \sqrt{n_r!}} : n_i \geq 0 \text{ and } n_1 + \dotsb + n_r = k\right \}.
\end{equation*}
By replacing $U$ with a sufficiently small coordinate neighborhood if necessary, we can write $h = e^{-\phi}$ for some matrix valued function $\phi$. Then, by Proposition \ref{prop: sym}, we can write $\Sym^k h = e^{-\mathfrak{s}^k(\phi)}$. The proof of the following proposition is included in Appendix A.
\begin{prop} \label{prop: k}
Let $V$ be an inner product space with $\dim(V) = r < \infty$. Then, there exists a constant $C(r)$ that only depend on $r$ such that, for any endomorphism $M \in \End(V)$,
$$
\|\mathfrak{s}^k(M) \|_{op} \leq C(r)k\|M\|_{op}.
$$
Furthermore, if $M$ is Hermitian and the set of eigenvalues of $M$ is $\{\lambda_1, \dotsc, \lambda_r\}$, then $\mathfrak{s}^k M$ is also Hermitian and the set of eigenvalues of $\mathfrak{s}^k(M)$ is $\left \{ \sum_{i=1}^r n_i \lambda_i : \sum_{i=1}^r n_i = k \right \}$. 
\end{prop}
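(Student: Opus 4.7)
The plan is to reduce everything to the explicit formula for $\mathfrak{s}^k(M)$ as a derivation on decomposable symmetric tensors, handle the Hermitian case by spectral theory, and reduce the general case to the Hermitian one by the Cartesian decomposition $M = H_1 + iH_2$.

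The first step is to differentiate the group level identity $\Sym^k(\exp(tM))(v_1 \cdots v_k) = (\exp(tM)v_1) \cdots (\exp(tM)v_k)$ at $t=0$. Since $\mathfrak{s}^k$ is the differential at the identity of the representation $\Sym^k$, the Leibniz rule yields
$$
\mathfrak{s}^k(M)(v_1 \cdots v_k) = \sum_{i=1}^k v_1 \cdots M(v_i) \cdots v_k.
$$
This is the only structural input I need for the rest of the argument.

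Next I would prove the spectral statement for Hermitian $M$. Fix an $h$-orthonormal eigenbasis $\{e_1, \dotsc, e_r\}$ of $V$ with $M e_j = \lambda_j e_j$. Substituting into the derivation formula gives, for any multi-index with $\sum n_j = k$,
$$
\mathfrak{s}^k(M)(e_1^{n_1} \cdots e_r^{n_r}) = \Bigl(\sum_{j=1}^r n_j \lambda_j \Bigr) e_1^{n_1} \cdots e_r^{n_r}.
$$
A short combinatorial computation directly from the definition of $\Sym^k h$ shows that distinct monomials are $\Sym^k h$-orthogonal and that $\|e_1^{n_1} \cdots e_r^{n_r}\|^2_{\Sym^k h} = n_1! \cdots n_r!$, so the normalized monomials form an orthonormal eigenbasis of $\mathfrak{s}^k(M)$ with real eigenvalues. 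This simultaneously proves that $\mathfrak{s}^k(M)$ is Hermitian with the claimed spectrum and gives the norm bound
$$
\|\mathfrak{s}^k(M)\|_{op} = \max_{\sum n_j = k} \Bigl|\sum_{j=1}^r n_j \lambda_j\Bigr| \leq k \max_j |\lambda_j| = k \|M\|_{op}.
$$

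For the general case, write $M = H_1 + iH_2$ with $H_1 = (M + M^*)/2$ and $H_2 = (M - M^*)/(2i)$, both Hermitian and each of operator norm at most $\|M\|_{op}$. Since $\mathfrak{s}^k$ is linear, the triangle inequality combined with the Hermitian estimate gives
$$
\|\mathfrak{s}^k(M)\|_{op} \leq \|\mathfrak{s}^k(H_1)\|_{op} + \|\mathfrak{s}^k(H_2)\|_{op} \leq 2k\|M\|_{op},
$$
so one may take $C(r) = 2$. There is no genuine obstacle here; the only bookkeeping lies in the orthogonality computation for the symmetric-power basis, which is a direct application of the definition of $\Sym^k h$.
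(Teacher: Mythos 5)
Your proposal is correct, and it proves the statement by a genuinely different route than the paper. The paper's proof (Proposition~\ref{prop: sym} in Appendix~A) is entirely coordinate-based: it writes out $\mathfrak{s}^k$ explicitly in the basis $\{u_{\mathbf{n}}\}$, bounds each matrix entry by $C'k\|M\|_{op}$ via norm equivalence on $M_{r\times r}(\mathbb{C})$, observes that each row and column of $\mathfrak{s}^k(M)$ has at most $r(r-1)+1$ nonzero entries, and then applies Cauchy--Schwarz to obtain a bound with a constant that grows with $r$. Your argument instead derives the Leibniz formula for $\mathfrak{s}^k(M)$ abstractly, establishes the Hermitian case first by spectral theory (which you need anyway for the eigenvalue statement) using the orthogonality computation $\Sym^k h(e_{\mathbf{n}}, e_{\mathbf{m}}) = \delta_{\mathbf{n}\mathbf{m}}\,\mathbf{n}!$, and then handles general $M$ via the Cartesian decomposition $M = H_1 + iH_2$, exploiting $\|M^*\|_{op} = \|M\|_{op}$ and linearity of $\mathfrak{s}^k$. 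This buys you a cleaner argument with no sparsity bookkeeping and a sharper, dimension-free constant $C = 2$; the paper's route, by contrast, makes the full matrix of $\mathfrak{s}^k(M)$ in the $\{u_{\mathbf{n}}\}$-basis explicit (including the off-diagonal $\sqrt{(n_i+1)n_j}$ entries), which it reuses elsewhere in the appendix for the transpose and conjugation identities in parts~(1) and~(2) of Proposition~\ref{prop: sym}. Both proofs are valid; yours is the more economical one for the statement as posed.
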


For each $x,y \in U$, we view $e^{-\mathfrak{s}^k(\phi(y))}e^{\mathfrak{s}^k(\psi(x,\overline{y}))}$ as a homomorphism from $\Sym^k E_y$ to $\Sym^k E_x$ acting on row vectors.

\begin{prop} \label{prop: main estimate 2}
Let $x_0 \in X$ and let $U$ be a sufficiently small coordinate unit disk centered at $x_0$. Then, there exists a constant $\delta > 0$ such that
$$
\|e^{-\mathfrak{s}^k( \phi(y))}e^{\mathfrak{s}^k(\psi(x,\overline{y}))}\|_{op(\Sym^kE_y,\Sym^kE_x)}^2 \leq e^{-\delta k|x-y|^2}
$$
for any $x,y \in U$ and $k \in \mathbb{N}$. 
\end{prop}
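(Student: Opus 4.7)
The plan is to rewrite $\|A\|_{op(\Sym^k E_y,\Sym^k E_x)}^2$, with $A := e^{-\mathfrak{s}^k\phi(y)}e^{\mathfrak{s}^k\psi(x,\overline{y})}$, as the largest eigenvalue of a matrix similar to $e^{\mathfrak{s}^k D(y,x)}$, and then to invoke Propositions~\ref{prop: main estimate 1} and~\ref{prop: k} to bound that eigenvalue by $e^{-\delta k|x-y|^2}$. Setting $H_y := e^{-\mathfrak{s}^k\phi(y)}$ and $H_x := e^{-\mathfrak{s}^k\phi(x)}$ for the matrix representations of $\Sym^k h$ at $y$ and $x$ (so that $|v|_{H_y}^2 = v H_y v^*$ for a row vector $v$, with $v^*$ the conjugate transpose), the Rayleigh quotient
\[
\|A\|_{op}^2 = \sup_{v\neq 0}\frac{v\,A H_x A^*\,v^*}{v\,H_y\,v^*}
\]
identifies $\|A\|_{op}^2$ with the largest eigenvalue of the Hermitian matrix $H_y^{-1/2}A H_x A^* H_y^{-1/2}$, equivalently of the similar matrix $H_y^{-1}A H_x A^*$.

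To compute $A^*$ I would first establish the Hermiticity relation $\psi(x,\overline{y})^* = \psi(y,\overline{x})$: after shrinking so that $U$ is closed under conjugation, the map $(y,z)\mapsto\psi(\overline{z},\overline{y})^*$ is holomorphic in $(y,z)$ on $U\times U$ and its restriction to the totally real diagonal $z=\overline{y}$ equals $\phi(y)^*=\phi(y)=\psi(y,\overline{y})$, so by the uniqueness argument in Lemma~\ref{lem: diastatic} it coincides with $\psi$. This gives $A^* = e^{\mathfrak{s}^k\psi(y,\overline{x})}e^{-\mathfrak{s}^k\phi(y)}$, and after cancellation
\[
H_y^{-1}A H_x A^* = e^{\mathfrak{s}^k\psi(x,\overline{y})}e^{-\mathfrak{s}^k\phi(x)}e^{\mathfrak{s}^k\psi(y,\overline{x})}e^{-\mathfrak{s}^k\phi(y)}.
\]
Applying $\Sym^k$ to the identity defining $D(y,x)$ (and using $\Sym^k(e^M) = e^{\mathfrak{s}^k M}$) yields
\[
e^{\mathfrak{s}^k\psi(x,\overline{y})}e^{-\mathfrak{s}^k\phi(x)}e^{\mathfrak{s}^k\psi(y,\overline{x})} = e^{\mathfrak{s}^k\phi(y)/2}e^{\mathfrak{s}^k D(y,x)}e^{\mathfrak{s}^k\phi(y)/2},
\]
so $H_y^{-1}A H_x A^* = e^{\mathfrak{s}^k\phi(y)/2}e^{\mathfrak{s}^k D(y,x)}e^{-\mathfrak{s}^k\phi(y)/2}$ is conjugate to $e^{\mathfrak{s}^k D(y,x)}$ and hence has the same spectrum.

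Finally, I would combine the two preceding propositions. By Proposition~\ref{prop: main estimate 1} applied to $D(y,x)$ (the argument is symmetric in $x$ and $y$), every eigenvalue $\lambda_i$ of the Hermitian matrix $D(y,x)$ satisfies $\lambda_i \leq -\delta|x-y|^2$ on a sufficiently small disk. By Proposition~\ref{prop: k}, the eigenvalues of $\mathfrak{s}^k D(y,x)$ are the sums $\sum_i n_i \lambda_i$ with $n_i\geq 0$ and $\sum_i n_i = k$, each of which is bounded above by $-\delta k|x-y|^2$. Exponentiating gives $\|A\|_{op}^2 \leq e^{-\delta k|x-y|^2}$. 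The only delicate point in the whole argument is the identification of $H_y^{-1}A H_x A^*$ as a conjugate of $e^{\mathfrak{s}^k D(y,x)}$, which hinges on the Hermiticity relation $\psi(x,\overline{y})^*=\psi(y,\overline{x})$; once that is in place, everything else is bookkeeping.
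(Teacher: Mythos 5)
Your argument is correct and is essentially the paper's proof: the paper converts the metric operator norm via $\|A\|_{op}^2=\|A^*A\|_{op}$ into $\|e^{\mathfrak{s}^k(D(x,y))}\|_{op}$ and then applies Propositions \ref{prop: k} and \ref{prop: main estimate 1}, which is exactly the reduction you carry out with the Rayleigh quotient and the matrix conjugate to $e^{\mathfrak{s}^k D(y,x)}$. The only difference is cosmetic: you make explicit the relation $\psi(x,\overline{y})^*=\psi(y,\overline{x})$ (hence the Hermiticity of $D$ and the formula for $A^*$), which the paper uses implicitly.
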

\begin{proof}

We observe that
\begin{align*}
\|e^{-\mathfrak{s}^k(\phi(y))}e^{\mathfrak{s}^k(\psi(x,\overline{y}))}\|_{op(E_y,E_x)}^2 & = \| e^{-\mathfrak{s}^k(\phi(y))/2} e^{\mathfrak{s}^k(\psi(x,\overline{y}))} e^{-\mathfrak{s}^k(\phi(x))/2}\|_{op}^2 \\
    & = \|e^{-\mathfrak{s}^k(\phi(x))/2} e^{\mathfrak{s}^k(\psi(y,\overline{x}))}e^{ - \mathfrak{s}^k(\phi(y))}  e^{\mathfrak{s}^k(\psi(x,\overline{y}))} e^{-\mathfrak{s}^k(\phi(x))/2}\|_{op} 
\end{align*}
where the second equality is due to the fact that $\|A\|_{op}^2 = \|A^*A\|_{op}$ for any matrix $A$. On the other hand,
\begin{equation*}
\begin{split}
e^{\mathfrak{s}^k \left ( D(x,y) \right )}
& = \Sym^k \left ( e^{D(x,y)} \right ) \\
& = \Sym^k \left ( e^{-\phi(x)/2} e^{\psi(y,\overline{x})}e^{-\phi(y)}  e^{\psi(x,\overline{y})} e^{-\phi(x)/2}\right )  \\
& = \Sym^k \left ( e^{-\phi(x)/2} \right ) \Sym^k \left ( e^{\psi(y,\overline{x})} \right ) \Sym^k \left ( e^{-\phi(y)} \right ) \Sym^k \left (  e^{\psi(x,\overline{y})} \right ) \Sym^k \left ( e^{-\phi(x)/2} \right )\\
& = e^{-\mathfrak{s}^k(\phi(x))/2} e^{\mathfrak{s}^k(\psi(y,\overline{x}))}e^{ - \mathfrak{s}^k(\phi(y))}  e^{\mathfrak{s}^k(\psi(x,\overline{y}))} e^{-\mathfrak{s}^k(\phi(x))/2}
\end{split}
\end{equation*}
for all $x,y \in U$.
As a result,
\begin{equation*}
\|e^{-\mathfrak{s}^k(\phi(y))}e^{\mathfrak{s}^k(\psi(x,\overline{y}))}\|_{op(E_y,E_x)}^2 = \|e^{\mathfrak{s}^k (D(x,y))}\|_{op}
\end{equation*}
for all $x,y \in U$. Let $\lambda_{\max}(x,y)$ denote the maximum eigenvalue of $D(x,y)$. By Proposition \ref{prop: k} and Proposition \ref{prop: main estimate 1}, 
$$
\left \| \mathfrak{s}^k(D(x,y))  \right \|_{op} \leq k \lambda_{\max}(x,y) \leq -\delta k |x-y|^2.
$$
So, for all $x,y \in U$,
$$
\|e^{- \mathfrak{s}^k(\phi(y))} e^{\mathfrak{s}^k(\psi(x,\overline{y}))}\|_{op(\Sym^k E_y, \Sym^k E_x)}^2 \leq e^{-\delta k |x-y|^2}.
$$

\end{proof}

\section{Phase}

In this section, we will work locally in a coordinate unit disk $U$ centered at $x_0$ that satisfies the hypothesis of Proposition \ref{prop: main estimate 2}. Let $(\cdot, \cdot)_{U,k}$ denote the $L^2$ inner product
$$
(\cdot, \cdot)_{U,k} = \int_U \langle \cdot, \cdot \rangle_{\Sym^kh} \omega
$$
and let $\| \cdot \|_{U,k}$ denote the induced norm. We write $H_{U,k}$ for the set of all holomorphic sections $u : U \to \Sym^k E$ such that $(u,u)_{U,k} < \infty$. Choose a smooth function $\chi$ on $U$ that is compactly supported and is identically $1$ on the disk of radius $\frac{1}{2}$ centered at the origin. As in \cite{BBS}, we will construct a local reproducing kernel with an exponential error term.

Before we define what a local reproducing kernel is, we note the following fact. For any complex vector spaces $V$ and $W$, we can extend the inner product $( \cdot, \cdot)_{U,k} : C^\infty(U,\Sym^k E) \times C^\infty(U,\Sym^k E) \to \mathbb{C}$ to a pairing $(\cdot, \cdot)_{U,k} : C^\infty(U,\Sym^k E) \otimes V \times C^\infty(U,\Sym^k E) \otimes W \to \overline{W} \otimes V$ by using the formula
$$
(s \otimes v, t \otimes w)_{U,k} = (s,t)_{U,k} \cdot ( \overline{w} \otimes v),
$$
where $\overline{w}$ is the image of $w$ under the conjugation map $W \to \overline{W}$.
\begin{defn}
We say that smooth sections $\sigma_k : U \times U \to \Sym^k E \boxtimes \Sym^k \overline{E}$ for $k \in \mathbb{N}$ are \emph{reproducing kernels modulo $O(e^{-\delta k})$ for $H_{U,k}$} if 
\begin{equation*}
\left \| u(x) - (\chi u, \sigma_k(\cdot, x))_{U,k} \right \|_{\Sym^k h(x)} = O(e^{-\delta k}) \|u\|_{U,k} 
\end{equation*}
for all $u \in H_{U,k}$ and for all $x$ in a sufficiently small neighborhood of the origin.
\end{defn}

We construct a local reproducing kernel as follows. Define $P : U \times U \times U \to M_{r \times r}(\mathbb{C})$ by
\begin{equation} \label{eq: Phase}
P(x,y,z) = \tilde{Z}(-\psi(y,z), \psi(x,z)).
\end{equation}
Then, $P$ is analytic on $U$ after replacing $U$ with a smaller coordinate unit disk if necessary. Define $\theta : U\times U \times U \to M_{r \times r}(\mathbb{C})$ by
$$
\theta(x,y,z) = \int_0^1 \partial_1 P(tx + (1-t) y, y, z) \,dt. 
$$

\begin{prop} \label{prop: local reprod}
Let $k \in \mathbb{N}$. Then, $\sigma_k : U \times U \to \Sym^k E \boxtimes \Sym^k \overline{E}$ defined by
\begin{equation} \label{eq: reprod ker}
\overline{\sigma_k (y,x)} = \frac{1}{2\pi\sqrt{-1}} e^{\mathfrak{s}^k \psi(x,\overline{y})} \mathfrak{s}^k \left (\sum_{n=1}^\infty \frac{1}{n!} \ad(-\theta(x,y,\overline{y})(x-y))^{n-1} \left (\deriv{\theta}{\overline{y}}(x,y,\overline{y}) \right )\right ) \frac{d\overline{y} \wedge dy}{\omega}(y)
\end{equation}
is a reproducing kernel modulo $O(e^{-\delta k})$ for $H_{U,k}$. In other words,
$$
\left \| u(x) - \left ( \chi u, \sigma_k(\cdot, x) \right )_{U,k}  \right \|_{\Sym^k h(x)} =O (e^{-\delta k})\|u\|_{U,k}
$$
for all $x \in U$ with $|x| < 1/4$ for some constant $\delta >0$.
\end{prop}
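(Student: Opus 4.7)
My strategy follows the Berman-Berndtsson-Sj\"ostrand approach, adapted to vector bundles via the diastatic function. The starting observation is that $\psi(y,\overline{y}) = \phi(y)$ combined with the definition of $P$ yields the decomposition
$$e^{\mathfrak{s}^k \psi(x,\overline{y})} = e^{\mathfrak{s}^k \phi(y)}\, e^{\mathfrak{s}^k P(x,y,\overline{y})},$$
so that $\sigma_k$ can be rewritten with the factor $e^{\mathfrak{s}^k \phi(y)} = (\Sym^k h(y))^{-1}$ peeled off. The plan is to recognize the remaining factor as a $\overline{\partial}_y$-exact matrix-valued form with a simple pole at $y=x$, then apply Stokes' theorem in Bochner-Martinelli fashion to extract $u(x)$ as a residue and estimate the leftover piece against $\overline{\partial}\chi$.

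\textbf{Step 1 (Rewriting $\sigma_k$).} The first task is to interpret the ad-power series in \eqref{eq: reprod ker} as a logarithmic derivative of $e^{\mathfrak{s}^k P}$ in $\overline{y}$. The algebraic identity
$$\sum_{n=1}^\infty \tfrac{1}{n!}\, \ad(-P)^{n-1} = \tfrac{1 - e^{-\ad P}}{\ad P},$$
together with the Hadamard-type identity $\partial P/\partial \overline{y} = (x-y)\, \partial \theta/\partial \overline{y}$ coming from the definition of $\theta$, and the matrix-exponential derivative formula $\overline{\partial}_y\, e^X = e^X \cdot \tfrac{1-e^{-\ad X}}{\ad X}(\overline{\partial}_y X)$ applied to $X = \mathfrak{s}^k P$, should yield
$$\overline{\sigma_k(y,x)}\, \omega(y) \;=\; \tfrac{1}{2\pi\sqrt{-1}}\, e^{\mathfrak{s}^k \phi(y)}\, \overline{\partial}_y\!\left(\frac{e^{\mathfrak{s}^k P(x,y,\overline{y})}}{x-y}\right)\wedge dy$$
for all $y \in U \setminus \{x\}$. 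This is the vector-bundle analogue of the identity underlying the BBS construction in the line-bundle case.

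\textbf{Step 2 (Stokes' theorem).} Next, I would pair with $u \in H_{U,k}$ using $\Sym^k h(y) = e^{-\mathfrak{s}^k \phi(y)}$, which cancels the $e^{\mathfrak{s}^k \phi(y)}$ factor produced in Step 1. Applying Stokes on $U \setminus D_\epsilon(x)$ to the matrix-valued $(1,0)$-form
$$\chi(y)\, u(y)\, \tfrac{e^{\mathfrak{s}^k P(x,y,\overline{y})}}{x-y}\, dy$$
and using $\overline{\partial}u = 0$ reduces $(\chi u, \sigma_k(\cdot, x))_{U,k}$ in the limit $\epsilon \to 0$ to two terms: a residue contribution at $y=x$ equal to $u(x)$, because $P(x,x,\overline{x}) = 0$ (and hence $e^{\mathfrak{s}^k P(x,x,\overline{x})} = \Id$) and $\chi(x)=1$; and a boundary term
$$u(x) - (\chi u, \sigma_k(\cdot, x))_{U,k} \;=\; \tfrac{1}{2\pi\sqrt{-1}} \int_U \overline{\partial}\chi(y)\, u(y)\, \tfrac{e^{\mathfrak{s}^k P(x,y,\overline{y})}}{x-y}\wedge dy,$$
interpreted in the local holomorphic frame so that both sides lie in $\Sym^k E_x$.

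\textbf{Step 3 (Error estimate).} To conclude, I would use that $\overline{\partial}\chi$ is supported in the annulus $\{1/2 \leq |y| \leq 1\}$, where $|x-y| \geq 1/4$ whenever $|x| < 1/4$. Rewriting $e^{\mathfrak{s}^k P(x,y,\overline{y})} = e^{-\mathfrak{s}^k \phi(y)} e^{\mathfrak{s}^k \psi(x,\overline{y})}$ and applying Cauchy-Schwarz in $\|\cdot\|_{U,k}$ reduces matters to the pointwise bound of Proposition \ref{prop: main estimate 2},
$$\|e^{-\mathfrak{s}^k \phi(y)} e^{\mathfrak{s}^k \psi(x,\overline{y})}\|_{op(\Sym^k E_y,\, \Sym^k E_x)}^2 \leq e^{-\delta k|x-y|^2}.$$
Since $|x-y|^2 \geq 1/16$ on $\supp \overline{\partial}\chi$, this gives an $O(e^{-\delta k/32})\|u\|_{U,k}$ estimate and, after shrinking $\delta$ to absorb any polynomial-in-$k$ prefactors, yields the claim.

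The main obstacle is the algebraic identification in Step 1: one has to check that the specific arrangement of nested commutators in \eqref{eq: reprod ker} really represents the right-logarithmic derivative $e^{-\mathfrak{s}^k P}\,\overline{\partial}_y e^{\mathfrak{s}^k P}$, and that the resulting simple pole at $y=x$ has residue $\Id$. Once this identity is in place, Steps 2-3 proceed by the standard BBS argument.
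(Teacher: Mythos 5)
Your proof is correct, but it takes a genuinely different route from the paper's. The paper introduces the auxiliary phase space $V = U \times M_{r\times r}(\mathbb{C})$, defines the form $\eta_k$ with $\theta$ as a free variable, and deforms the contour $\Lambda_0 = \{(y,\theta(x,y,\overline y))\}$ to $\Lambda_s$ by the shift $\theta \mapsto \theta(x,y,\overline y) - s(\overline x - \overline y)$; the Gaussian factor $e^{-ks|x-y|^2}$ that this shift inserts turns the $s \to \infty$ limit of $\int_{\Lambda_s}\eta_k$ into a heat-kernel approximation of $u(x)$, and the Stokes boundary term over $U \times (0,s)$ produces the $\overline\partial\chi$ error. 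You instead recognize the crucial identity $\overline{\sigma_k(y,x)}\,\omega(y) = \tfrac{1}{2\pi\sqrt{-1}}\,e^{\mathfrak{s}^k\phi(y)}\,\overline\partial_y\!\bigl(e^{\mathfrak{s}^k P(x,y,\overline y)}/(x-y)\bigr)\wedge dy$ (which is indeed valid: $e^{\mathfrak{s}^k\psi(x,\overline y)} = e^{\mathfrak{s}^k\phi(y)}e^{\mathfrak{s}^k P}$, $P = \theta\cdot(x-y)$, and \eqref{eqn: expderiv} together with $\ad(\mathfrak{s}^k A)(\mathfrak{s}^k B) = \mathfrak{s}^k(\ad(A)(B))$ convert the ad-series into the right logarithmic derivative), and then run a one-variable residue argument on $U\setminus D_\epsilon(x)$, extracting $u(x)$ directly from $e^{\mathfrak{s}^k P(x,x,\overline x)} = \Id$ at the simple pole. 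Both routes arrive at exactly the same $\overline\partial\chi$ boundary term and the same appeal to Proposition \ref{prop: main estimate 2} on the annulus $|x-y|\geq 1/4$, so your error estimate in Step 3 is the paper's estimate with the extraneous $t$-integral already carried out. Two small points you should make explicit: (i) the convergence of $\int_{U\setminus D_\epsilon(x)}\chi u\,\overline\partial_y\bigl(e^{\mathfrak{s}^k P}/(x-y)\bigr)\wedge dy$ to the proper integral $(\chi u, \sigma_k(\cdot,x))_{U,k}$ requires that the integrand be bounded near $y=x$; this holds because $\partial_3 P(x,x,z) = 0$ (indeed $P(x,x,z)\equiv 0$), so $\overline\partial_y e^{\mathfrak{s}^k P}$ vanishes at $y=x$ and the apparent pole is removable — equivalently it is built into the smoothness of the amplitude in \eqref{eq: reprod ker}; and (ii) passing from the pointwise operator-norm bound to the $\|\cdot\|_{\Sym^k h(x)}$-norm of the integral needs a Cauchy--Schwarz step of the sort the paper spells out with Lemma \ref{lem: Operator}. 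Your approach is more economical for this proposition alone, while the paper's phase-space formulation is what sets up the free-$\theta$ bookkeeping reused in the negligible-amplitude machinery of the following section.
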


\begin{proof}

Let $k \in \mathbb{N}$ and let $u \in H_{U,k}$. Fix $x \in U$ with $|x| < 1/4$. We consider the open set
$$
V = \{(y,\theta) : y \in U \text{ and } \theta \in M_{r \times r}(\mathbb{C})\}.
$$
Set $r_k = \rk(\Sym^k E) = \binom{k+r-1}{r-1}$. Define a $r_k \times r_k$ matrix valued $2$-form $\eta_k$ on $V$ by
$$
\eta_k =  \frac{1}{2\pi \sqrt{-1}}  \chi(y) u(y) e^{\mathfrak{s}^k (\theta (x-y))} \mathfrak{s}^k \left (  \sum_{n=1}^\infty  \frac{1}{n!} \ad(-\theta(x-y))^{n-1} (d\theta)  \right ) \wedge dy
$$
where
\begin{equation*}
    d\theta = 
    \begin{pmatrix}
    d \theta_{11} & \dotsb & d \theta_{1r} \\
    \vdots & \ddots & \vdots \\
    d \theta_{r1} & \dotsb & d\theta_{rr}
    \end{pmatrix}.
\end{equation*}
Note that $\sum_{n=1}^\infty \frac{1}{n!} \ad (-\theta(x-y))^{n-1}$ is holomorphic in $\theta$ because $\sum_{n=1}^\infty \frac{1}{n!} z^{n-1} = \frac{e^z -1}{z}$ is an entire function. For each $s \geq 0$, define $\Lambda_s \subseteq V$ by
$$
\Lambda_s = \{(y,\theta(x,y,\overline{y})-s(\overline{x} - \overline{y})) : y \in U\}
$$
and define $F_s : U \times [0,s] \to V$ by
$$
F_s(y, t) = (y,\theta(x,y,\overline{y}) - t(\overline{x} - \overline{y})).
$$
By Stoke's Theorem,
$$
\int_{\Lambda_s} \eta_k - \int_{\Lambda_0} \eta_k = \int_{U \times (0,s)} d ({F_s}^*\eta_k).
$$
We set $T = \frac{1}{r} \tr \theta$, $\theta_0 = \theta - T \Id$, and $d\Vol = \frac{\sqrt{-1}}{2} dy \wedge d\overline{y}$. Note that $\ad(\theta_0) = \ad(\theta)$ since $\theta - \theta_0$ is a multiple of the identity matrix. Then,
\begin{align*}
& \hspace{10pt} \int_{\Lambda_s} \eta_k  \\
    & = \int_{\Lambda_s} \frac{1}{2\pi \sqrt{-1}} \chi(y)u(y)  e^{\mathfrak{s}^k(\theta(x-y))} \mathfrak{s}^k\left ( \sum_{n=1}^\infty \frac{1}{n!} \ad(-\theta_0(x-y))^{n-1} (d\theta_0) \right ) \wedge dy \\
    & \hspace{10pt} + \int_{\Lambda_s} \frac{1}{2\pi \sqrt{-1}} \chi(y)u(y) e^{\mathfrak{s}^k(\theta(x-y))} k\,dT \wedge \,dy \\
    & = \int_{U} \frac{1}{\pi} e^{-ks|x-y|^2} \chi(y)u(y)  e^{\mathfrak{s}^k(\theta(x,y,\overline{y})(x-y))}  \mathfrak{s}^k\left ( \sum_{n=1}^\infty \frac{1}{n!} \ad(-\theta_0(x,y,\overline{y}) (x-y))^{n-1} \left ( \deriv{\theta_0}{\overline{y}}(x,y,\overline{y}) \right )  \right ) \,d\Vol \\
    & \hspace{10pt} + \int_{U} \frac{k}{\pi}e^{-ks|x-y|^2} \chi(y)u(y) e^{\mathfrak{s}^k(\theta(x,y,\overline{y})(x-y))} \deriv{T}{\overline{y}}(x,y,\overline{y}) \,d\Vol \\
    & \hspace{10pt} + \int_{U} \frac{ks}{\pi}e^{-ks|x-y|^2} \chi(y)u(y) e^{\mathfrak{s}^k(\theta(x,y,\overline{y})(x-y))} \,d\Vol.
\end{align*}
Note that the functions $\theta_0(x,y,\overline{y})$ and $T(x,y,\overline{y})$ do not depend on $s$. Set
\begin{equation*}
I_1 =  \int_{U} \frac{1}{\pi} e^{-ks|x-y|^2} \chi(y)u(y)  e^{\mathfrak{s}^k(\theta(x,y,\overline{y})(x-y))}  \mathfrak{s}^k\left ( \sum_{n=1}^\infty \frac{1}{n!} \ad(-\theta_0(x,y,\overline{y}) (x-y))^{n-1} \left ( \deriv{\theta_0}{\overline{y}}(x,y,\overline{y}) \right )  \right ) \,d\Vol
\end{equation*}
\begin{equation*}
I_2 = \int_{U} \frac{k}{\pi}e^{-ks|x-y|^2} \chi(y)u(y) e^{\mathfrak{s}^k(\theta(x,y,\overline{y})(x-y))} \deriv{T}{\overline{y}}(x,y,\overline{y}) \,d\Vol,
\end{equation*}
and
\begin{equation*}
I_3 =  \int_{U} \frac{ks}{\pi}e^{-ks|x-y|^2} \chi(y)u(y) e^{\mathfrak{s}^k(\theta(x,y,\overline{y})(x-y))} \,d\Vol.
\end{equation*}
The integral kernel $y \mapsto \frac{ks}{\pi} e^{-ks|x-y|^2}$ defined on $\mathbb{R}^2$ has the property that
\begin{equation*}
\lim_{s \to \infty} \int_{U} \frac{ks}{\pi} e^{-ks|x-y|^2} \varphi(y) \,d\Vol = \varphi(x)
\end{equation*}
for any smooth function $\varphi$ compactly supported in $U$. Then, 
\begin{gather*}
\lim_{s \to \infty} s I_1 = \frac{1}{k}u(x) \mathfrak{s}^k \left ( \deriv{\theta_0}{\overline{y}}(x,x,\overline{x}) \right ), \\
\lim_{s \to \infty} sI_2 = u(x)\deriv{T}{\overline{y}}(x,x,\overline{x}), \hspace{15pt} \text{and} \\
\lim_{s \to \infty} I_3 = u(x).
\end{gather*}
It follows that
$$
\lim_{s \to \infty} \int_{\Lambda_s} \eta_k = \lim_{s \to \infty} I_1 + I_2 + I_3 = u(x).
$$
Moreover, by the definition of $\sigma_k$,
$$
\int_{\Lambda_0} \eta_k   = \left ( \chi u, \sigma_k(\cdot, x) \right )_{U,k}.
$$
So, it is enough to show that $v_{k,s}(x) = \int_{U \times (0,s)} d {F_s}^*\eta_k$ satisfies the bound 
$$
\|v_{k,s}(x)\|_{\Sym^k h(x)} \leq C e^{-\delta k} \|u\|_{U,k}
$$
where $C$ and $\delta$ are constants that do not depend on $k$ and $s$.

If $C(t)$ is a smooth curve in $M_{r\times r}(\mathbb{C})$, then it is well known that 
\begin{equation} \label{eqn: expderiv}
e^{-C(t)} \frac{d}{dt} \left ( e^{C(t)} \right ) = \sum_{n=1}^\infty \frac{1}{n!} \ad(-C(t))^{n-1} (C'(t)).
\end{equation}
For a proof, see \cite{St}. Equation \eqref{eqn: expderiv} implies that 
$$
d \left (  e^{\mathfrak{s}^k (\theta (x-y))} \mathfrak{s}^k \left (  \sum_{n=1}^\infty  \frac{1}{n!} \ad(-\theta(x-y))^{n-1} (d\theta)  \right ) \wedge dy \right ) = d \left ( \frac{d(e^{\mathfrak{s}^k(\theta (x-y))})}{x-y} \wedge dy \right ) = 0.
$$
Additionally, $d u \wedge dy = 0$ because $u$ is holomorphic. Then,
\begin{align*}
d\eta_k & =   \frac{1}{2\pi \sqrt{-1} }  d \chi(y) \wedge u(y) e^{\mathfrak{s}^k (\theta (x-y))} \mathfrak{s}^k \left (  \sum_{n=1}^\infty  \frac{1}{n!} \ad(-\theta(x-y))^{n-1} (d\theta)  \right ) \wedge dy.
\end{align*}
It follows that
\begin{align*}
& \int_{U \times (0,s)} d {F_s}^*\eta_k \\
    & = \frac{1}{2\pi \sqrt{-1}} \int_{U \times (0,s)} d \chi(y) \wedge u(y)  e^{\mathfrak{s}^k (\theta (x-y) - t|x-y|^2\Id)}  \mathfrak{s}^k \left (  \sum_{n=1}^\infty  \frac{1}{n!} \ad(-\theta (x-y))^{n-1} (d( \theta - t(\overline{x} - \overline{y})\Id))  \right ) \wedge  dy \\
    & = \frac{k}{\pi} \int_{U \times (0,s)} \deriv {\chi}{\overline{y}} (y)  u(y)  (\overline{x} - \overline{y}) e^{\mathfrak{s}^k (\theta (x-y))} e^{-kt|x-y|^2} \,d \Vol \wedge dt .
\end{align*}

Note that
\begin{equation*}
    \theta(x,y,\overline{y}) (x-y) =  \int_0^1 \partial_1 P(tx + (1-t) y, y, z) (x-y) \,dt = P(x,y,z).
\end{equation*}
We write
\begin{gather*}
(M^i_j(x,y))= e^{-\mathfrak{s}^k(\phi(y))/2} e^{\mathfrak{s}^k(\psi(x,\overline{y}))} e^{-\mathfrak{s}^k(\phi(x))/2}, \text{ and }\\
(\tilde{u}^j(y))  = u(y)e^{-\mathfrak{s}^k(\phi(y))/2}.
\end{gather*}
Then,
\begin{align*}
\|v_{k,s}(x)\|_{\Sym^k h(x)}^2   
    &\leq \frac{k^2}{\pi^2}\sum_{i=1}^{r_k}  \left ( \int_{U \times (0,s)} \left | \sum_{j=1}^{r_k}\tilde{u}^j(y) M^i_j(x,y) \right | \left |\deriv {\chi}{\overline{y}} (y) e^{-kt|x-y|^2} (\overline{x} - \overline{y}) \right | \,d\Vol\right )  ^2 \\
    & \leq  \frac{k^2}{\pi^2} \sum_{i=1}^{r_k} \left(   \int_{U \times (0,s)} \left | \sum_{j=1}^{r_k}\tilde{u}^j(y) M^i_j(x,y) \right |^2 \left |\deriv {\chi}{\overline{y}} (y) \right |  e^{-kt|x-y|^2} |x-y|  \,d\Vol dt  \right )\\
    & \left ( \int_{U \times (0,s)} \left |\deriv {\chi}{\overline{y}} (y) \right | e^{-kt|x-y|^2} |x-y|  \,d\Vol dt \right ) \\
    & \text{ by H\"older's inequality} \\
    & \leq  \frac{k^2}{\pi^2} \left(   \int_{U \times (0,s)} \left \|(M^i_j(x,y)) \right \|^2_{op} \|u(y)\|_{\Sym^k h(y)}^2  \left |\deriv {\chi}{\overline{y}} (y) \right |  e^{-kt|x-y|^2} |x-y|  \,d\Vol dt  \right )\\
    & \left ( \int_{U \times (0,s)} \left |\deriv {\chi}{\overline{y}} (y) \right | e^{-kt|x-y|^2} |x-y|  \,d\Vol dt \right ) \\
\end{align*}


By Proposition \ref{prop: main estimate 2}, $\|(M^i_j(x,y))\|_{op}^2 \leq e^{-\delta k|x-y|^2}$. Since $\deriv{\chi}{\overline{y}} (y) = 0$ for all $|y| < 1/2$ and $|x| < 1/4$, $|x-y|$ must be at least $1/4$ whenever $\deriv{\chi}{\overline{y}} \neq 0$. Additionally, there exists a constant $C > 0$ such that
$$
\int_{\text{supp}(\chi)}  \langle u(y), u(y)\rangle_{\Sym^kh} \,d\Vol  \leq C \|u\|_{U,k}^2.
$$
because $\chi$ is compactly supported in $U$. Then, 
\begin{align*}
& \frac{k^2}{\pi^2} \int_{U \times (0,s)} \|(M^i_j(x,y))\|_{op}^2 \|u(y)\|_{\Sym^k h(y)}^2 \left |\deriv {\chi}{\overline{y}} (y) \right | e^{-kt|x-y|^2} |x-y| \,d\Vol dt  \\
    \leq \hspace{2pt} & C' k^{2} e^{-\delta k/16} \int_{\text{supp}(\chi) \times (0,s)}  \|u(y)\|_{\Sym^k h(y)}^2 e^{-kt/16} \,d\Vol \,dt \\
     \leq \hspace{2pt} & C' k^{2} e^{-\delta k/16} \int_{\text{supp}(\chi)}  \|u(y)\|_{\Sym^k h(y)}^2  \,d\Vol\int_0^\infty e^{-kt/16} \,dt \\
    \leq \hspace{2pt} &  C'' e^{-\delta'k} \|u\|_{U,k}^2
\end{align*}
where $C'' > 0$ is independent of $k$ and $s$ and $\delta' > 0$ is a constant less than $\delta$. Similarly,
$$
\int_{U \times (0,s)} \left | \deriv{\chi}{\overline{y}} \right | e^{-kt|x-y|^2} |x-y| \,d\Vol \,dt    
    \leq  \int_0^\infty \int_{U} 2C' e^{-kt/16} \,d\Vol \,dt 
    \leq  2C'''.
$$
We conclude that 
$$
\|v_{k,s}(x)\|_{\Sym^k h (x)}^2 \leq C'''' e^{-\delta'k} \|u\|_{U,k}^2
$$
as required.

\end{proof}

\section{Negligible Amplitudes}

We have constructed reproducing kernels modulo $O(e^{-\delta k})$ for $H_{U,k}$. However, unlike the Bergman kernel $K_k(y,x)$, the integral kernel $\sigma_k(y,x)$ is not holomorphic in the variable $y$. To correct this, we again follow the ideas in \cite{BBS} and multiply $\sigma_k(y,x)$ by factors that give sufficiently small contribution to the integral.

As above, we set $r_k = \rk(\Sym^k E) = \binom{k+r-1}{r-1}$.  

\begin{defn}
We say that an $r_k \times r_k$ matrix valued function $a_k(x,y,z)$ for $k \in \mathbb{N}$ is a \emph{negligible amplitude} if we can write
\begin{equation*}
\begin{split}
& e^{\mathfrak{s}^k(\theta (x,y,z)(x-y))} \mathfrak{s}^k \left ( \sum_{n=1}^\infty \frac{1}{n!} \ad \left (- \theta(x,y,z) (x-y) \right )^{n-1} \left ( \deriv{\theta}{z} (x,y,z)\right )\right )a_k(x,y,z) \,dz \wedge dy \\ 
= \hspace{2pt} & d (e^{\mathfrak{s}^k (\theta(x,y,z) (x-y))} A_k(x,y,z) \,dy)
\end{split}
\end{equation*}
where $A_k(x,y,z) \,dy$ is an $r_k \times r_k$ matrix valued one form.
\end{defn}

Equivalently, $a_k$ is a negligible amplitude if and only if 
\begin{equation*}
    \begin{split}
    &   \mathfrak{s}^k \left ( \sum_{n=1}^\infty \frac{1}{n!} \ad \left (-\theta(x,y,z) (x-y) \right )^{n-1} \left ( \deriv{\theta}{z}(x,y,z) \right )\right ) a_k(x,y,z) \\
        = \hspace{2pt} & \deriv{A_k}{z}(x,y,z) + \mathfrak{s}^k \left ( \sum_{n=1}^\infty \frac{1}{n!} \ad \left (-\theta (x,y,z)(x-y) \right )^{n-1} \left ( \deriv{\theta}{z}(x,y,z)(x-y) \right )\right ) A_k(x,y,z)
    \end{split}
\end{equation*}
for some $r_k \times r_k$ matrix valued function $A_k(x,y,z)$. 

\begin{defn}
We say that an $r_k \times r_k$ matrix-valued function $a^{(N)}_k(x,y,z)$ is a \emph{negligible amplitude modulo $O(k^{-N})$} if we can write
\begin{equation*}
    \begin{split}
    & e^{\mathfrak{s}^k (\theta (x-y))} \mathfrak{s}^k \left ( \sum_{n=1}^\infty \frac{1}{n!} \ad \left (- \theta (x,y,z) (x-y) \right )^{n-1} \left ( \deriv{\theta}{z} (x,y,z) \right )\right )a^{(N)}_k(x,y,z) \,dz \wedge dy \\
        = \hspace{2pt} & d (e^{\mathfrak{s}^k (\theta (x,y,z) (x-y))} A^{(N)}_k(x,y,z) \,dy) + e^{\mathfrak{s}^k(\theta (x,y,z) (x-y))} G_{k,N}(x,y,z) \,dz \wedge dy
    \end{split}
\end{equation*}
where $\|A^{(N)}_k(x,y,z)\|_{op(\Sym^kE_x,\Sym^kE_x)} = O(1)$ and $\|G_{k,N}(x,y,z)\|_{op(\Sym^kE_x,\Sym^kE_x)} = O(k^{-N})$.
\end{defn}

We will show that negligible amplitudes give sufficiently small contributions to the integral.

\begin{lem} \label{lem: Operator}
Let $\Gamma$ be a Riemannian manifold with finite volume and let $A: \Gamma \to M_{r\times r}(\mathbb{C})$ be a continuous matrix valued function on $\Gamma$. Then,
\begin{equation*}
\left \| \int_\Gamma A(x) \,dx \right \|_{op} \leq \Vol(\Gamma)^{1/2} \left ( \int_\Gamma \| A(x)\|_{op}^2 \,dx \right)^{1/2}.
\end{equation*}
\end{lem}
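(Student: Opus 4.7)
The plan is to reduce the matrix-valued integral bound to a pair of scalar Cauchy--Schwarz inequalities by unfolding the operator norm through its duality characterization.

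First I would use the fact that for any matrix $M \in M_{r \times r}(\mathbb{C})$,
\[
\|M\|_{op} = \sup_{\|v\|=\|w\|=1} |\langle Mv, w \rangle|,
\]
where $v,w$ range over unit vectors in $\mathbb{C}^r$. Since the integral $\int_\Gamma A(x)\,dx$ is defined entrywise and taking inner products is linear, for any unit vectors $v,w$,
\[
\left\langle \left(\int_\Gamma A(x)\,dx\right) v, w \right\rangle = \int_\Gamma \langle A(x) v, w\rangle \,dx.
\]

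Next, I would apply the standard triangle inequality for the integral, followed by the elementary bound $|\langle A(x)v,w\rangle| \leq \|A(x)\|_{op}$ (using $\|v\|=\|w\|=1$), to obtain
\[
\left| \int_\Gamma \langle A(x)v, w\rangle \,dx \right| \leq \int_\Gamma \|A(x)\|_{op}\,dx.
\]
Then the scalar Cauchy--Schwarz inequality applied to the pair of functions $\|A(\cdot)\|_{op}$ and $1$ on the finite-volume space $\Gamma$ yields
\[
\int_\Gamma \|A(x)\|_{op} \, dx \leq \Vol(\Gamma)^{1/2} \left( \int_\Gamma \|A(x)\|_{op}^2 \,dx \right)^{1/2}.
\]
Taking the supremum of the resulting inequality over all unit vectors $v,w$ recovers the operator norm of $\int_\Gamma A(x)\,dx$ on the left-hand side, which is the desired estimate.

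There is no real obstacle here; this is a bookkeeping lemma. The only thing to confirm is that $\int_\Gamma A(x)\,dx$ is well-defined as a matrix (which follows from continuity of $A$ together with finiteness of $\Vol(\Gamma)$, at least on compact exhaustions, combined with the hypothesis that $\int_\Gamma \|A\|_{op}^2\,dx$ is finite — otherwise the bound is vacuous) and that integration commutes with pairing against fixed vectors, which is immediate from the entrywise definition.
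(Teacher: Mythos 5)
Your proof is correct, and it takes a slightly different route from the paper's. The paper fixes a vector $v \in \mathbb{C}^r$ and estimates $\|(\int_\Gamma A\,dx)v\|^2$ entrywise: it applies Cauchy--Schwarz (H\"older) to each component $\int_\Gamma \sum_j A_j^i(x)v^j\,dx$, sums over $i$, reassembles the sum as $\Vol(\Gamma)\int_\Gamma \|A(x)v\|^2\,dx$, and finally bounds $\|A(x)v\| \leq \|A(x)\|_{op}\|v\|$. You instead unfold the operator norm through its sesquilinear duality characterization $\|M\|_{op} = \sup_{\|v\|=\|w\|=1}|\langle Mv,w\rangle|$, commute the fixed pairing past the integral, bound $|\langle A(x)v,w\rangle| \leq \|A(x)\|_{op}$, and then apply scalar Cauchy--Schwarz to $\|A(\cdot)\|_{op}$ against the constant function $1$ just once. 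Both routes are ultimately a single application of Cauchy--Schwarz in $L^2(\Gamma)$, but your version avoids the entrywise bookkeeping and is a bit cleaner conceptually; the paper's version passes through the marginally sharper intermediate bound $\|(\int A)v\|^2 \leq \Vol(\Gamma)\int\|A(x)v\|^2\,dx$, though this extra sharpness is immediately discarded in the last step. Your closing remark about integrability is fair but harmless: in the applications in the paper $\Gamma$ is either compact or a bounded piece of $U$ with $A$ continuous and bounded, so the integral is defined and the inequality is never vacuous.
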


\begin{proof}
Let $v = (v^1, \dotsc, v^r) \in \mathbb{C}^r$. We observe that
\begin{align*}
\left \| \left ( \int_\Gamma A(x) \,dx\right ) (v) \right \|^2 & = \sum_{i=1}^r \left | \int_\Gamma \sum_{j=1}^r A_j^i(x)v^j \,dx \right |^2 \\
    & \leq \sum_{i=1}^r \int_\Gamma \left | \sum_{j=1}^r A_j^i(x)v^j \right |^2 \,dx \int_\Gamma 1 \,dx \\
    & \text{ by H\"older's inequality} \\
    & = \Vol(\Gamma) \int_\Gamma \sum_{i=1}^r \left | \sum_{j=1}^r A_j^i(x)v^j \right |^2 \,dx \\
    & = \Vol(U) \int_U \|(A(x))(v)\|^2 \,dx \\
    & \leq \Vol(U) \left ( \int_U \|A(x)\|_{op}^2 \,dx \right ) \|v\|^2.
\end{align*}
It follows from the definition of $\|\cdot\|_{op}$ that
\begin{equation*}
\left \| \int_\Gamma A(x) \,dx \right \|_{op}^2 \leq \Vol(\Gamma) \int_\Gamma \| A(x)\|_{op}^2 \,dx.
\end{equation*}
\end{proof}

\begin{thm} \label{thm: local reprod mod}
Let $x_0 \in X$ and $N \in \mathbb{N}$. Let $U$ be a coordinate unit ball centered at $x_0$ small enough so that $\sigma_k : U \times U \to \Sym^kE \boxtimes \Sym^k \overline{E}$ defined by \eqref{eq: reprod ker} is a reproducing kernel modulo $O(e^{-\delta k})$ for $H_{U,k}$. Suppose that $a_k^{(N)}: U \times U \times U \to M_{r_k \times r_k}(\mathbb{C})$ is a negligible amplitude modulo $O(k^{-N})$. Then,
$$
\left \|u(x) - \left (\chi u, \sigma_{k}(\cdot, x) \left ( \overline{\Id + a_k^{(N)}(x,\cdot,\overline{\cdot})} \right ) \right )_{U,k} \right  \|_{\Sym^k h(x)} = O(k^{-N})   \|u\|_{U,k}
$$
for every $u \in H_{U,k}$, and $x \in U$ with $|x| < 1/4$.
\end{thm}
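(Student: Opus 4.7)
My strategy is to reduce the statement to Proposition~\ref{prop: local reprod} by splitting
\[
\bigl(\chi u, \sigma_k(\cdot,x)\overline{(\Id+a_k^{(N)}(x,\cdot,\overline{\cdot}))}\bigr)_{U,k}
= \bigl(\chi u, \sigma_k(\cdot,x)\bigr)_{U,k} + \bigl(\chi u, \sigma_k(\cdot,x)\overline{a_k^{(N)}(x,\cdot,\overline{\cdot})}\bigr)_{U,k}.
\]
Proposition~\ref{prop: local reprod} handles the first summand, yielding $u(x)+O(e^{-\delta k})\|u\|_{U,k}$, so it suffices to bound the second summand by $O(k^{-N})\|u\|_{U,k}$. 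To expand that inner product, I would use the identity $e^{-\mathfrak{s}^k\phi(y)}e^{\mathfrak{s}^k\psi(x,\overline y)} = e^{\mathfrak{s}^k(\theta(x,y,\overline y)(x-y))}$, obtained by applying $\Sym^k$ (via Proposition~\ref{prop: sym}) to $e^{-\phi(y)}e^{\psi(x,\overline y)}=e^{P(x,y,\overline y)}=e^{\theta(x,y,\overline y)(x-y)}$. This converts the integrand into precisely the shape of the left-hand side of the negligible amplitude identity, pulled back along the antidiagonal $z=\overline y$.

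Next, I would invoke the defining identity of a negligible amplitude modulo $O(k^{-N})$ to split the integrand into a total derivative piece $d\bigl(e^{\mathfrak{s}^k(\theta(x-y))}A_k^{(N)}(x,y,\overline y)\,dy\bigr)$ and a remainder piece $e^{\mathfrak{s}^k(\theta(x-y))}G_{k,N}(x,y,\overline y)\,d\overline y\wedge dy$. For the total derivative piece, I would integrate by parts against $\chi u$: the holomorphicity of $u$ kills the $\partial u\wedge dy$ contribution, and the boundary integral vanishes since $\chi$ is compactly supported in $U$, leaving only the term with $\overline\partial \chi$. Because $\overline\partial\chi$ is supported on $\{|y|\geq 1/2\}$ and $|x|<1/4$, we have $|x-y|\geq 1/4$ on this set, so the Gaussian bound $\|e^{\mathfrak{s}^k(\theta(x-y))}\|_{op}^2\leq e^{-\delta k|x-y|^2}$ from Proposition~\ref{prop: main estimate 2} combined with the hypothesis $\|A_k^{(N)}\|_{op}=O(1)$ produces an $O(e^{-\delta'k})\|u\|_{U,k}$ contribution, which is stronger than $O(k^{-N})$.

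For the remainder piece, I would run a Cauchy--Schwarz estimate modeled on the closing paragraphs of the proof of Proposition~\ref{prop: local reprod}, using Lemma~\ref{lem: Operator} to pass operator norms inside the integral. Combining $\|G_{k,N}\|_{op}=O(k^{-N})$ with the Gaussian decay $\|e^{\mathfrak{s}^k(\theta(x-y))}\|_{op}^2\leq e^{-\delta k|x-y|^2}$ and the $L^2$-bound $\int_U \|u\|_{\Sym^k h}^2\,d\Vol\lesssim \|u\|_{U,k}^2$ produces the desired $O(k^{-N})\|u\|_{U,k}$ bound. Adding the two pieces and combining with Proposition~\ref{prop: local reprod} yields the theorem.

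The main obstacle I anticipate is bookkeeping the operator-norm estimates so that the rank $r_k=\binom{k+r-1}{r-1}$ of $\Sym^k E$, which grows polynomially in $k$, does not contaminate the bounds. Proposition~\ref{prop: main estimate 2} is exactly the input that prevents this: the conjugations by $e^{\pm\mathfrak{s}^k\phi/2}$ implementing the fiber-to-fiber identifications $\Sym^k E_y\to\Sym^k E_x$ are absorbed uniformly in $k$ into the Gaussian decay, so the polynomial-in-$k$ matrix size never appears as a prefactor.
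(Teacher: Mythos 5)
Your proposal is correct and follows essentially the same route as the paper's proof: reduce to Proposition~\ref{prop: local reprod}, substitute the defining identity of a negligible amplitude modulo $O(k^{-N})$ to split the remaining integral into a total-derivative piece and a $G_{k,N}$ piece, integrate the former by parts (holomorphicity of $u$ kills $du\wedge dy$, compact support kills the boundary term, leaving only $\overline\partial\chi$), and close both estimates with Lemma~\ref{lem: Operator} together with the Gaussian decay from Proposition~\ref{prop: main estimate 2}. Your closing remark about why the polynomial growth of $r_k$ does not contaminate the constants is exactly the point the conjugations by $e^{\pm\mathfrak{s}^k\phi/2}$ are designed to handle.
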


\begin{proof}
Proposition \ref{prop: local reprod} implies that $\left \| u(x) - (\chi u,\sigma_k(\cdot,x))_{U,k} \right \|_{\Sym^k h(x)} = O(e^{-\delta k})\|u\|_{U,k}$. So, it suffices to show that $\left \|\left  (\chi u, \sigma_k(\cdot,x) \overline{ a^{(N)}_k(x,\cdot ,\overline{\cdot})} \right )_{U,k} \right \|_{\Sym^k h(x)} = O(k^{-N}) \|u\|_{U,k}$. We see that
\begin{align*}
{\left (\chi u, \sigma_k(\cdot,x) \overline{ a^{(N)}_k(x,\cdot ,\overline{\cdot})} \right )_{U,k}}
    = \hspace{2pt} & \int_{U} \frac{1}{2\pi \sqrt{-1}} \chi(y) u(y) \, d \left ( e^{\mathfrak{s}^k (\theta(x,y,\overline{y})(x-y))} A^{(N)}_k(x,y,\overline{y}) \right ) \wedge dy \\
    &  - \int_{U} \frac{1}{2\pi \sqrt{-1}}  \chi(y) u(y) \,  e^{\mathfrak{s}^k (\theta(x,y,\overline{y})(x-y))} G_{k,N}(x,y,\overline{y}) \,d\overline{y} \wedge dy \\
    = \hspace{2pt} &  - \int_U \frac{1}{2\pi \sqrt{-1}} d \chi(y) \wedge u(y) e^{\mathfrak{s}^k (\theta(x,y,\overline{y})(x-y))} A^{(N)}_k (x,y,\overline{y}) \, dy \\
    &  - \int_{U} \frac{1}{2\pi \sqrt{-1}} \chi(y) u(y) \,  e^{\mathfrak{s}^k (\theta(x,y,\overline{y})(x-y))} G_{k,N}(x,y,\overline{y}) \, d\overline{y} \wedge dy \\
    = \hspace{2pt} &  -\frac{1}{\pi} \int_U \deriv{\chi}{\overline{y}}(y) u(y) e^{\mathfrak{s}^k(\theta(x,y,\overline{y})(x-y))}A^{(N)}_k(x,y,\overline{y}) \,d\Vol \\
    & - \frac{1}{\pi} \int_{U} \chi(y) u(y) \,  e^{\mathfrak{s}^k (\theta(x,y,\overline{y})(x-y))} G_{k,N}(x,y,\overline{y}) \,d \Vol. 
\end{align*}
We set $E^k := \Sym^k E$ and observe that,
\begin{align*}
& \left \|  \int_U \deriv{\chi}{\overline{y}}(y) u(y) e^{\mathfrak{s}^k(\theta(x,y,\overline{y})(x-y))}A^{(N)}_k(x,y,\overline{y}) \,d\Vol \right \|_{\Sym^k h(x)}^2  \\
    \leq \hspace{2pt} & \Vol(U) \int_U \left |\deriv{\chi}{\overline{y}}(y) \right |^2 \| u(y) \|_{\Sym^k h(y)}^2  \left \| e^{\mathfrak{s}^k(\theta(x,y,\overline{y})(x-y))}\right\|_{op( E_y^k, E_x^k)}^2  \left \| A^{(N)}_k(x,y,\overline{y})\right \|_{op( E_x^k, E_x^k)}^2 \,d \Vol \\
    & \text{ by Lemma \ref{lem: Operator}} \\
    \leq \hspace{2pt} & C e^{-\delta k} \int_{\text{supp}(\chi)} \| u(y) \|_{\Sym^k h(y)}^2 \\
    & \text{ by Proposition \ref{prop: main estimate 2} and the fact that $\left \| A^{(N)}_k(x,y,\overline{y})\right \|_{op( E_x^k, E_x^k)} = O(1)$ } \\
    \leq  \hspace{2pt} & C'  e^{-\delta k}\|u\|_{U,k}^2.
\end{align*}
Similarly,
$$
 \left \|  \int_{U} \chi(y) u(y) \,  e^{\mathfrak{s}^k (\theta(x-y))} G_{k,N}(x,y,\overline{y}) \right \|_{\Sym^k h(x)}^2 = O(k^{-N}) \|u\|_{U,k}^2. 
$$

\end{proof}

\section{Construction of Negligible Amplitudes}

In this section, we will construct negligible amplitudes modulo $O(k^{-N})$ on a sufficiently small coordinate unit disk $U$ centered at $x_0 \in X$. The negligible amplitudes $a_k^{(N)}$ will have the property that $\sigma_k(y,x)\left ( \overline{\Id + a_k^{(N)}(x,y,\overline{y})} \right )$ is analytic in $\overline{x}$ and $y$. 

Set $E^k := \Sym^k E$. We look for an analytic matrix valued function $b_k$ so that
$$
\mathfrak{s}^k \left ( \sum_{n=1}^\infty \frac{1}{n!} \ad \left (-\theta (x,y,z)(x-y) \right )^{n-1} \left ( \deriv{\theta}{z} (x,y,z) \right )\right ) (\Id + a_k(x,y,z)) =   b_k(x,z) \tilde{g}(y,z)
$$
where
$$
\tilde{g}(y,z) =  \partial_1\partial_2 \varphi(y,z)
$$
and $\varphi$ is a real analytic function on $U$ such that $\sqrt{-1} \partial \overline{\partial}\varphi = \omega \mid_U$. We write
\begin{equation} \label{eq: tau}
\tau(x,y,z) := \sum_{n=1}^\infty \frac{1}{n!} \ad (-\theta(x,y,z)(x-y))^{n-1} \left ( \deriv{\theta(x,y,z)}{z}\right ).
\end{equation}
Then, the equations that we wish to solve become
\begin{equation} \label{eq: power series}
\begin{gathered}
\mathfrak{s}^k(\tau(x,y,z)) a_k(x,y,z)  = \deriv{A_k}{z}(x,y,z) + \mathfrak{s}^k (\tau(x,y,z))(x-y) A_k(x,y,z)  \hspace{10pt} \text{ and } \\
\mathfrak{s}^k \left ( \tau(x,y,z) \right ) (\Id + a_k(x,y,z)) =   b_k(x,z) \tilde{g}(y,z) .
\end{gathered}
\end{equation}
We view each matrix involved in the equation as an endomorphism of $E^k_x$ acting on row vectors.

We look for formal power series solutions
\begin{gather*}
a_k(x,y,z) = \sum_{m=0}^\infty \frac{a_{k,m}(x,y,z)}{k^m}, \\
A_k(x,y,z) = \sum_{m=0}^\infty \frac{A_{k,m}(x,y,z)}{k^m} , \text{ and } \\
b_k(x,z) = k\sum_{m=0}^\infty \frac{b_{k,m}(x,z)}{k^m},
\end{gather*}
where $a_{k,m}(x,y,z)$, $A_{k,m}(x,y,z)$, and $b_k(x,z)$ are analytic matrix-valued functions. The series above need not converge, but we require that, for $x,y,z$ in a small coordinate disk centered at $x_0$, the quantities $\|a_{k,m}(x,y,z)\|_{op(E^k_x,E^k_x)}$, $\|A_{k,m}(x,y,z)\|_{op(E^k_x,E^k_x)}$,  \text{ and} $\|b_{k,m}(x,z)\|_{op(E^k_x,E^k_x)}$ are bounded from above by constants independent of $k$ for all $k \in \mathbb{N}$ and $m \in \mathbb{Z}_{\geq 0}$.

By \eqref{eq: tau}, \eqref{eqn: expderiv} and \eqref{eq: Phase},
\begin{equation} \label{eq: tau2}
\tau(x,y,z) =  \frac{ e^{-P(x,y,z)} \partial_3 e^{P(x,y,z)}} {x-y} 
    = \frac{e^{-\psi(x,z)}e^{\psi(y,z)} \partial_2 \left ( e^{-\psi(y,z)} e^{\psi(x,z)} \right )}{ x-y}.
\end{equation}
Using \eqref{eq: tau2}, one can show that
$$
\tau(0,0,0) = \tilde{F}(0,0)
$$
where $\tilde{F}$ is the matrix-valued function on $U \times U$ such that $\tilde{F} (y,\overline{y})\,dy \wedge d \overline{y}$ is the curvature of $h$ at $y$. Since $\mathfrak{s}^k \left ( \tilde{F}(0,0) \right )$ is a positive definite Hermitian matrix, $\mathfrak{s}^k(\tau(x,y,z))$ is invertible near the origin. Similarly, $\tilde{g}(y,z)$ also has a local inverse because 
$$
\tilde{g}(0,0) = \sqrt{-1} \frac{\omega(x_0)}{d\overline{y} \wedge dy} \neq 0.
$$
Additionally, we observe that
\begin{equation*}
\begin{aligned}
\|\mathfrak{s}^k(\tau(x,y,z))\|_{op(E^k_x,E^k_x)} = & \|e^{\mathfrak{s}^k (\phi(x))/2}\mathfrak{s}^k(\tau(x,y,z)) e^{-\mathfrak{s}^k (\phi(x))/2}\|_{op} \\
    = \hspace{2pt} & \|\Sym^k (e^{\phi(x)/2})\mathfrak{s}^k(\tau(x,y,z)) \Sym^k(e^{-\phi(x)/2}) \|_{op} \\
    = \hspace{2pt} & \|\mathfrak{s}^k(e^{\phi(x)/2} \tau(x,y,z)e^{-\phi(x)/2})\|_{op} \\
    = \hspace{2pt} & O(k).
\end{aligned}
\end{equation*}

The terms of order $k$ in \eqref{eq: power series} are
\begin{gather*}
\mathfrak{s}^k(\tau(x,y,z)) a_{k,0}(x,y,z)  = \mathfrak{s}^k (\tau(x,y,z))(x-y) A_{k,0}(x,y,z)  \hspace{10pt} \text{ and } \\
\mathfrak{s}^k \left ( \tau(x,y,z) \right )(\Id + a_{k,0}(x,y,z)) =   k b_{k,0}(x,z) \tilde{g}(y,z).
\end{gather*}
Since $\mathfrak{s}^k(\tau(x,y,z))$ has a local inverse, the first equation is equivalent to 
$$
a_{k,0}(x,y,z) = (x-y) A_{k,0}(x,y,z).
$$
Then, restricting the second equation to $\{(x,y,z) \in U\times U \times U: y=x\}$ yields
$$
\mathfrak{s}^k \left ( \deriv{\theta}{z}(x,x,z) \right ) = k  b_{k,0}(x,z) \tilde{g}(x,z).
$$
Since $\tilde{g}(y,z)$ has a local inverse, 
$$
b_{k,0}(x,z) = \frac{1}{k} \mathfrak{s}^k\left ( \deriv{\theta}{z}(x,x,z) \right ) \tilde{g}(x,z)^{-1}.
$$
Then,
$$
a_{k,0}(x,y,z) =  \mathfrak{s}^k \left ( \tau(x,y,z) \right )^{-1} \mathfrak{s}^k\left ( \deriv{\theta}{z}(x,x,z) \right ) \tilde{g}(x,z)^{-1} \tilde{g}(y,z)  - \Id.
$$
In particular, $a_{k,0}(x,x,z) = 0$. Then, we can find a smaller coordinate unit disk $U_1$ such that $a_{k,0}(x,y,z)(x-y)^{-1}$ is uniformly bounded for all $x,y,z \in U_1$ with $x \neq y$. By the Riemann Extension Theorem (\cite{H}, Proposition 1.1.7), $A_{k,0}(x,y,z)$ is analytic on $U_1 \times U_1 \times U_1$.

Proceeding inductively, suppose that $m > 0$ and that $A_{k,m-1}(x,y,z)$ is analytic on $U_{m-1} \times U_{m-1} \times U_{m-1}$. The terms of order $k^{-m+1}$ in \eqref{eq: power series} are
\begin{gather*}
\mathfrak{s}^k(\tau(x,y,z)) \frac{a_{k,m}(x,y,z)}{k^m}  = \frac{1}{k^{m-1}}\deriv{A_{k,m-1}}{z}(x,y,z) + \mathfrak{s}^k (\tau(x,y,z))(x-y) \frac{A_{k,m}(x,y,z)}{k^m} \text{ and } \\
\mathfrak{s}^k \left ( \tau(x,y,z) \right ) \frac{a_{k,m}(x,y,z)}{k^m} =  \tilde{g}(y,z) \frac{b_{k,m}(x,z)}{k^{m-1}} .
\end{gather*}
Restricting both equations to $\{(x,y,z)\in U_{m-1} \times U_{m-1} \times U_{m-1} : y=x\}$, we see that
$$
\deriv{A_{k,m-1}}{z}(x,x,z) = \frac{1}{k}\mathfrak{s}^k \left ( \deriv{\theta}{z}(x,x,z) \right ) a_{k,m}(x,x,z) =\tilde{g}(x,z) b_{k,m}(x,z)
$$
So, 
$$
b_{k,m}(x,z) = \tilde{g}(x,z)^{-1} \deriv{A_{k,m-1}}{z}(x,x,z).
$$
We can solve for $a_{k,m}$ using the equation
$$
a_{k,m}(x,y,z) =  k \tilde{g}(y,z) \mathfrak{s}^k \left ( \tau(x,y,z) \right )^{-1} b_{k,m}(x,z).
$$
Then, the first equation implies that
$$
A_{k,m}(x,y,z)(x-y) =  a_{k,m}(x,y,z)  - k\mathfrak{s}^k \left ( \tau(x,y,z) \right )^{-1} \deriv{A_{k,m-1}}{z}(x,y,z).
$$
Note that $a_{k,m}(x,x,z) - k\mathfrak{s}^k(\tau(x,x,z))^{-1} \deriv{A_{k,m-1}}{z}(x,x,z) = 0$ for all $x,z \in U_{m-1}$. Applying the Riemann Extension Theorem again, we can find a smaller coordinate unit disk $U_m$ centered at $x_0$ such that $A_{k,m}(x,y,z)$ is analytic on $U_m \times U_m \times U_m$. 

Fix $N \in \mathbb{Z}_{\geq 0}$. For each $m \leq N$, we have constructed functions $a_{k,m}(x,y,z)$, $A_{k,m}(x,y,z)$ and $b_{k,m}(x,z)$ that are analytic on $U\times U \times U$ or $U\times U$ where $U$ is a sufficiently small coordinate unit disk centered at $x_0$. Define $a^{(N)}_k:U \times U \times U \to M_{r_k \times r_k}(\mathbb{C})$, $A^{(N)}_k : U \times U \times U \to M_{r_k \times r_k}(\mathbb{C})$, and $b_k^{(N)} : U \times U \to M_{r_k \times r_k}(\mathbb{C})$ by 
\begin{gather*}
a^{(N)}_k(x,y,z) = \sum_{m=0}^N  \frac{a_{k,m}(x,y,z)}{k^m},  \\
A^{(N)}_k(x,y,z) = \sum_{m=0}^N  \frac{A_{k,m}(x,y,z)}{k^m},\hspace{10pt} \text{ and } \\
b_k^{(N)}(x,z) = k \sum_{m=0}^N \frac{b_{k,m}(x,z)}{k^{m}}.
\end{gather*}
By construction,
\begin{equation} \label{eqn: b_k id}
\sigma_k(y,x)\left ( \overline{\Id + a_k^{(N)}(x,y,\overline{y})} \right ) = \frac{1}{2\pi} \overline{ e^{\mathfrak{s}^k(\psi(x,\overline{y}))}b_k^{(N)}(x,\overline{y})} .
\end{equation}
We will show that $a^{(N)}_k$ is a negligible amplitude modulo $O(k^{-N})$.

\begin{lem} \label{lem: inverse}
For any multi-index $\alpha \in \left (\mathbb{Z}_{\geq 0} \right )^n$, let $A_\alpha$ be an $r$ by $r$ matrix with $\|A_\alpha\|_{op} \leq M \frac{|\alpha|!}{\alpha!} \rho^{-|\alpha|}$ for some $\rho > 0$. Let
$$
A(x) = \sum_{\alpha} A_\alpha x^\alpha
$$
and suppose that $A_0$ is invertible. Set $\|{A_0}^{-1}\|_{op} = m$. Then, $A$ has an inverse in a neighborhood of the origin and
$$
A^{-1}(x) = \sum_{\alpha} B_\alpha x^\alpha
$$
with $\|B_\alpha\|_{op} \leq m \left ( \frac{2 M m n}{\rho} \right )^{|\alpha|} \frac{ |\alpha|!}{\alpha!}$ for any multi-index $\alpha$.
\end{lem}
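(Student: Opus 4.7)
The plan is to factor $A(x) = A_0(I + R(x))$ where $R(x) = A_0^{-1}(A(x) - A_0)$ (so $R(0) = 0$), to invert $I + R(x)$ via the Neumann series $\sum_{j \geq 0}(-R(x))^j$ (which is well-defined as a formal matrix-valued power series because $R$ vanishes at the origin), and then to bound the resulting coefficients using the method of majorants. A preliminary observation I would use at the end is that $mM \geq \|A_0^{-1}\|_{op}\|A_0\|_{op} \geq 1$, since the $\alpha = 0$ case of the hypothesis gives $\|A_0\|_{op} \leq M$.

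The central step is a scalar majorization. The identity $(x_1 + \cdots + x_n)^k = \sum_{|\alpha|=k} \frac{k!}{\alpha!} x^\alpha$ shows that the hypothesis $\|A_\alpha\|_{op} \leq M|\alpha|!/(\alpha!\rho^{|\alpha|})$ is equivalent to saying that $\sum_\alpha \|A_\alpha\|_{op} x^\alpha$ is dominated coefficient-wise by the scalar series $M/(1 - u/\rho)$, where $u = x_1 + \cdots + x_n$. Consequently $\sum_\alpha \|R_\alpha\|_{op} x^\alpha$ is dominated by $mMu/(\rho - u)$, and submultiplicativity of the operator norm yields that $\sum_\alpha \|S_\alpha\|_{op} x^\alpha$, with $S(x) := \sum_{j \geq 0}(-R(x))^j$, is dominated by $\sum_{j \geq 0}(mMu/(\rho - u))^j = (\rho - u)/(\rho - (1 + mM)u)$. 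Since $A^{-1}(x) = S(x)A_0^{-1}$, the operator norms of the coefficients of $A^{-1}$ are dominated by the coefficients of $m(\rho - u)/(\rho - (1 + mM)u)$.

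To conclude, I would expand $m(\rho - u)/(\rho - (1 + mM)u) = m(1 - u/\rho)\sum_{k \geq 0}((1+mM)/\rho)^k u^k$ and read off that the coefficient of $u^k$ for $k \geq 1$ equals $m^2 M(1+mM)^{k-1}/\rho^k$. The multinomial expansion of $u^k$ then gives that the coefficient of $x^\alpha$ with $|\alpha| = k$ is $m^2 M(1+mM)^{k-1}/\rho^k \cdot |\alpha|!/\alpha!$. Applying $1 + mM \leq 2mM$ (from the preliminary observation) and the harmless bound $1 \leq n^{|\alpha|}$ gives the claimed estimate $\|B_\alpha\|_{op} \leq m(2Mmn/\rho)^{|\alpha|}|\alpha|!/\alpha!$; the $|\alpha| = 0$ case is trivially $\|B_0\|_{op} = m$. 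Convergence of the scalar majorant on the region $\{\sum |x_i| < \rho/(1+mM)\}$ guarantees that $A^{-1}$ is a genuine analytic matrix-valued function in a neighborhood of the origin.

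The main obstacle I anticipate is the bookkeeping around the majorization step: one must recognize that the stated Cauchy-type bound is really a statement about the single variable $u = x_1 + \cdots + x_n$, and one must be careful in tracking how submultiplicativity of the operator norm passes through the iterated matrix products in the Neumann series. Once the reduction to a scalar geometric series is in place, the remaining manipulation is purely algebraic.
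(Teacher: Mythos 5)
Your proof is correct, but it takes a genuinely different route from the paper's. The paper proceeds by direct induction on $|\gamma|$ using the convolution recurrence $B_\gamma = -B_0 \sum_{\alpha < \gamma} A_{\gamma-\alpha} B_\alpha$ that comes from matching coefficients in $A(x)A^{-1}(x) = \Id$; the dimension $n$ enters there because the inductive estimate requires bounding the number of multi-indices $\beta$ with $|\beta| = \mu$, which is where the factor $(2n)^{-|\gamma-\alpha|}$ summing to less than $1$ appears. You instead factor $A = A_0(\Id + R)$, invert formally by a Neumann series, and reduce everything to the single scalar variable $u = x_1 + \cdots + x_n$ via the observation that $\|A_\alpha\|_{op} \leq M|\alpha|!/(\alpha!\,\rho^{|\alpha|})$ is precisely coefficient-wise domination by $M/(1-u/\rho)$. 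This majorant argument is cleaner and in fact gives the sharper bound $\|B_\alpha\|_{op} \leq m^2 M(1+mM)^{|\alpha|-1}\rho^{-|\alpha|}\,|\alpha|!/\alpha!$, from which you only invoke $n^{|\alpha|} \geq 1$ at the very end to match the form stated in the lemma — so your method shows the factor $n^{|\alpha|}$ is actually not needed. Both approaches require the observation $mM \geq 1$, which you correctly justify from $\|A_0\|\|A_0^{-1}\| \geq 1$ and $\|A_0\| \leq M$. The paper's induction is more elementary and self-contained; yours is more conceptual and scales better to variants where one wants to majorize more complicated expressions built from $A^{-1}$.
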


\begin{proof}

In order for $A(x)A^{-1}(x) = \Id$, we must have that 
$$
A_0 B_0 = \Id
$$
and 
$$
\sum_{\alpha + \beta = \gamma} A_\alpha B_\beta = 0
$$
for all $\gamma \neq 0$. If $|\gamma| = 1$, then,
$$
A_\gamma B_0 + A_0 B_\gamma = 0 \iff B_\gamma = - B_0( A_\gamma B_0)
$$
and
$$
\|B_\gamma\|_{op} \leq m (M\rho^{-1}m) \leq m \frac{2Mmn}{\rho}.
$$

We proceed by induction on $|\gamma|$. We see that
$$
B_\gamma = - B_0  \sum_{\alpha < \gamma} A_{\gamma - \alpha} B_\alpha .
$$
Then, 
\begin{align*}
\| B_\gamma \|_{op} & \leq m  \sum_{\alpha < \gamma} \| A_{\gamma - \alpha}\|_{op} \|B_\alpha\|_{op} \\
    & \leq m  \sum_{\alpha < \gamma} M \frac{ |\gamma - \alpha|!}{(\gamma - \alpha)!} \frac{1}{\rho^{|\gamma - \alpha|}} m \left ( \frac{2 M mn}{\rho} \right )^{|\alpha|} \frac{ |\alpha|!}{\alpha!} \\
    & \leq m \left ( \frac{2M mn}{\rho} \right )^{|\gamma|} \frac{ |\gamma|!}{\gamma!} \sum_{\alpha < \gamma}  \binom{|\gamma|}{|\alpha|}^{-1} \binom{\gamma}{\alpha}   \frac{1}{ (2n)^{|\gamma - \alpha|}} \\
    & \leq m \left ( \frac{2M mn}{\rho} \right )^{|\gamma|} \frac{ |\gamma|!}{\gamma!} \sum_{\alpha < \gamma}  \frac{1}{ (2n)^{|\gamma - \alpha|}} \\
    & \leq m \left ( \frac{2M mn}{\rho} \right )^{|\gamma|} \frac{ |\gamma|!}{\gamma!} \sum_{\mu = 1}^{|\gamma|}  \frac{1}{ 2^\mu} \\
    & \leq m \left ( \frac{2M mn}{\rho} \right )^{|\gamma|} \frac{ |\gamma|!}{\gamma!}.
\end{align*}

\end{proof}

\begin{prop} \label{prop: inverse}
Let $A(x) = \sum_{\alpha} A_\alpha x^\alpha$ be an analytic matrix-valued function defined on a neighborhood of the origin in $\mathbb{C}^n$. Let $M$ and $\rho$ be positive constants such that $A \in C_{op,M,\rho}(0)$. Suppose that $A_0$ is invertible and set $m = \|{A_0}^{-1}\|_{op}$. Then, $\|A^{-1}\|_{op} \leq 2\|{A_0}^{-1}\|_{op}$ on the polydisk of radius $\frac{\rho}{4n^2Mm}$ centered at the origin.
\end{prop}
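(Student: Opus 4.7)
The plan is to apply Lemma \ref{lem: inverse} to obtain an explicit power series expansion of $A^{-1}$ with controlled coefficients, and then to bound the resulting majorant series by a geometric series on the specified polydisk. The hypothesis $A \in C_{op,M,\rho}(0)$ means exactly that the Taylor coefficients $A_\alpha$ satisfy $\|A_\alpha\|_{op} \leq M |\alpha|!/(\alpha! \rho^{|\alpha|})$, and the invertibility of $A_0$ with $\|A_0^{-1}\|_{op}=m$ supplies the remaining hypothesis of Lemma \ref{lem: inverse}. That lemma then furnishes an expansion $A^{-1}(x) = \sum_\alpha B_\alpha x^\alpha$ with $\|B_\alpha\|_{op} \leq m (2Mmn/\rho)^{|\alpha|} |\alpha|!/\alpha!$.

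Next I would estimate $\|A^{-1}(x)\|_{op}$ on the polydisk of radius $r$ by the triangle inequality: $\|A^{-1}(x)\|_{op} \leq \sum_\alpha \|B_\alpha\|_{op} r^{|\alpha|}$. The key simplification is to group terms by total degree $k=|\alpha|$ and apply the multinomial identity $\sum_{|\alpha|=k} k!/\alpha! = n^k$ (obtained by evaluating $(1+\cdots+1)^k$). This converts the sum into the geometric series $m \sum_{k=0}^\infty (2Mmn^2 r/\rho)^k$. Choosing $r = \rho/(4n^2 Mm)$ produces common ratio $1/2$, so the sum evaluates to $2m = 2\|A_0^{-1}\|_{op}$, giving the desired estimate.

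There is essentially no obstacle, since the proposition is a direct bookkeeping corollary of Lemma \ref{lem: inverse} combined with the multinomial theorem. The only point worth highlighting is the appearance of $n^2$ in the denominator of the radius rather than $n$: one factor of $n$ enters through the coefficient bound from Lemma \ref{lem: inverse}, and a second factor of $n$ enters through the multinomial identity when collapsing the multi-indexed sum to a sum over total degree. The safety factor of $2$ in $4n^2 Mm$ (as opposed to $2n^2 Mm$) is precisely what makes the geometric series ratio $1/2$ rather than $1$, which both guarantees convergence and yields the explicit bound $2m$.
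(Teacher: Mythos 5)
Your proposal is correct and follows essentially the same route as the paper: apply Lemma \ref{lem: inverse} to get the coefficient bound $\|B_\alpha\|_{op} \leq m(2Mmn/\rho)^{|\alpha|}|\alpha|!/\alpha!$, and then majorize the resulting series by a geometric series with ratio $1/2$ at the stated radius. The paper invokes the closed-form identity $\frac{1}{1-(x_1+\cdots+x_n)} = \sum_\alpha \frac{|\alpha|!}{\alpha!}x^\alpha$ instead of spelling out the grouping by total degree via the multinomial theorem, but these are the same computation.
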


\begin{proof}
By our assumption, $\|A_\alpha\|_{op} \leq M \frac{|\alpha|!}{\alpha!} \rho^{-|\alpha|}$ for any index $\alpha$. By Lemma \ref{lem: inverse},
$$
A^{-1} = \sum_{\alpha} B_\alpha x^\alpha.
$$
with 
$$
\|B_\alpha\|_{op} \leq m \left ( \frac{2 M m n}{\rho} \right )^{|\alpha|} \frac{ |\alpha|!}{\alpha!}
$$
for all multi-indices $\alpha$. So, the identity $\frac{1}{1 - (x_1 + \dotsb + x_n)} = \sum_{\alpha} \frac{|\alpha|!}{\alpha!}x^\alpha$ implies that $\|A^{-1}\|_{op} \leq 2m$ on the polydisk of radius $\frac{\rho}{4n^2Mm}$.
\end{proof}

\begin{thm} \label{thm: bounds}
If $U$ is a sufficiently small coordinate unit disk, then $a_k^{(N)} : U \times U \times U \to M_{r_k \times r_k}(\mathbb{C})$ is a negligible amplitude modulo $O(k^{-N})$. Furthermore, there exists a constant $C_N$ independent of $k$ such that $\|b_{k,m}(x,z)\|_{op(E^k_x,E^k_x)} \leq C_N$ for all $m \leq N$, $k \in \mathbb{N}$, and $x,z \in U_N$.
\end{thm}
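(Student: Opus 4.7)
The plan is to prove the two assertions simultaneously by induction on $m$, after establishing a key preliminary: a uniform (in $k$) operator-norm bound $\|k\cdot\mathfrak{s}^k(\tau(x,y,z))^{-1}\|_{op} \leq C$ on a small coordinate polydisc. Since $\tau(0,0,0) = \tilde{F}(0,0)$ is Hermitian positive definite by Griffiths-positivity, Proposition \ref{prop: k} gives that the smallest eigenvalue of $\mathfrak{s}^k(\tilde{F}(0,0))$ is at least $k\lambda_{\min}$, where $\lambda_{\min} > 0$ is the smallest eigenvalue of $\tilde{F}(0,0)$. Factoring
\[
\mathfrak{s}^k(\tau(x,y,z)) = \mathfrak{s}^k(\tilde{F}(0,0))\bigl(I + \mathfrak{s}^k(\tilde{F}(0,0))^{-1}\mathfrak{s}^k(\tau(x,y,z) - \tilde{F}(0,0))\bigr)
\]
and applying the bound $\|\mathfrak{s}^k(R)\|_{op} \leq C(r)k\|R\|_{op}$ from Proposition \ref{prop: k}, the perturbation term has operator norm at most $C(r)\|\tau(x,y,z) - \tilde{F}(0,0)\|_{op}/\lambda_{\min}$, which drops below $1/2$ on a sufficiently small polydisc. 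A Neumann series then yields $\|\mathfrak{s}^k(\tau(x,y,z))^{-1}\|_{op} \leq 2/(k\lambda_{\min})$ throughout.

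Next, I would proceed inductively on $m$, shrinking the coordinate disk $U_m \subset U_{m-1}$ slightly at each step and establishing uniform bounds (independent of $k$) on $\|a_{k,m}\|_{op}$, $\|A_{k,m}\|_{op}$, and $\|b_{k,m}\|_{op}$. The base case uses the explicit formulas: the preliminary estimate together with Proposition \ref{prop: k} gives $\|b_{k,0}\|_{op}, \|a_{k,0}\|_{op} = O(1)$, and the identity $\tau(x,x,z) = \deriv{\theta}{z}(x,x,z)$ (immediate from \eqref{eq: tau}, since only the $n=1$ term survives at $y=x$) yields $a_{k,0}(x,x,z) = 0$, so that $A_{k,0}(x,y,z) = a_{k,0}(x,y,z)/(x-y)$ is analytic in $y$ and bounded on a smaller disk by Cauchy estimates. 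For the inductive step, Cauchy estimates applied to $A_{k,m-1}$ bound $\deriv{A_{k,m-1}}{z}$, hence $b_{k,m}$ and $a_{k,m}$; the defining identity for $A_{k,m}$ again has vanishing right-hand side at $y=x$ (using that the scalar $\tilde{g}$ commutes with $\mathfrak{s}^k(\tau)^{-1}$), so $A_{k,m}$ is analytic and bounded on a smaller disk.

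For the negligible amplitude property, I would sum the formal power series equations at orders $k^{1-m}$ for $m=0,\dotsc,N$. Noting that the $A_{k,m-1}$ contribution is absent for $m=0$, the result telescopes to
\[
\mathfrak{s}^k(\tau)\,a_k^{(N)} = \deriv{A_k^{(N)}}{z} + \mathfrak{s}^k(\tau)(x-y)\,A_k^{(N)} - \frac{1}{k^N}\deriv{A_{k,N}}{z}.
\]
Setting $G_{k,N} := -k^{-N}\deriv{A_{k,N}}{z}$ and invoking the inductive bound $\|A_{k,N}\|_{op} = O(1)$ on $U_N$, Cauchy estimates give $\|G_{k,N}\|_{op} = O(k^{-N})$ on a slightly smaller disk, which is exactly the condition for $a_k^{(N)}$ to be a negligible amplitude modulo $O(k^{-N})$.

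The main obstacle is the uniform invertibility bound for $\mathfrak{s}^k(\tau)$. Because $\mathfrak{s}^k$ is only a Lie algebra homomorphism and not multiplicative, one cannot exhibit $\mathfrak{s}^k(\tau)$ as a product of factors that are manifestly invertible with controlled inverses; the argument must instead exploit the positivity of $\tilde{F}(0,0)$ together with the eigenvalue description of $\mathfrak{s}^k$ on Hermitian matrices from Proposition \ref{prop: k}. Once that bound is secured, the remainder is a careful but routine induction driven by Cauchy estimates.
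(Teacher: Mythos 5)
Your strategy tracks the paper's closely: the same two-phase structure (a uniform inverse bound for $\mathfrak{s}^k(\tau)$, then a Cauchy-estimate induction on $m$), the same use of Proposition \ref{prop: k} to convert Hermitian positivity of the curvature into an eigenvalue lower bound of size $k$, and the same telescoping of the order-$k^{1-m}$ equations to extract the error term $G_{k,N}=-k^{-N}\partial_3 A_{k,N}$. Your observation that $\tau(x,x,z)=\partial\theta/\partial z(x,x,z)$ (only $n=1$ survives at $y=x$) and that the defining identity for $A_{k,m}$ vanishes on the diagonal $y=x$ are exactly what drives the Riemann-extension step in the paper.

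Where you genuinely differ is the uniform invertibility step. You factor $\mathfrak{s}^k(\tau)=\mathfrak{s}^k(\tilde F(0,0))\bigl(I+\mathfrak{s}^k(\tilde F(0,0))^{-1}\mathfrak{s}^k(\tau-\tilde F(0,0))\bigr)$ and run a Neumann series, using the linearity of $\mathfrak{s}^k$, the eigenvalue bound $\|\mathfrak{s}^k(\tilde F(0,0))^{-1}\|_{op}=1/(k\lambda_{\min})$, and $\|\mathfrak{s}^k(R)\|_{op}\leq C(r)k\|R\|_{op}$ to make the perturbation $k$-independent and small on a small polydisc. The paper instead proves Lemma \ref{lem: inverse} and Proposition \ref{prop: inverse} (uniform inversion of analytic matrix-valued power series) and applies them here. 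Your Neumann argument is more elementary and would render those two auxiliary results unnecessary for this proof; the paper's version is a reusable general-purpose lemma. Both are correct.

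There is one genuine gap, though. The theorem requires bounds in the \emph{metric-weighted} norm $\|\cdot\|_{op(E_x^k,E_x^k)}$, but throughout you work with the standard operator norm $\|\cdot\|_{op}$. These differ by conjugation by $e^{\mathfrak{s}^k(\phi(x))/2}$, and since $\|\mathfrak{s}^k(\phi(x))\|_{op}$ grows like $O(k\|\phi(x)\|_{op})$, this conjugation inflates norms by factors that are exponential in $k$ away from $x=0$ — so a uniform bound on $\|\cdot\|_{op}$ does not transfer to $\|\cdot\|_{op(E_x^k,E_x^k)}$. The paper resolves this by replacing $\tau$ with $\tilde\tau(x,y,z):=e^{\phi(x)/2}\tau(x,y,z)e^{-\phi(x)/2}$ and, for fixed $x$, replacing $a_{k,m}, A_{k,m}$ with $\tilde a_{k,m}, \tilde A_{k,m}:=e^{\mathfrak{s}^k(\phi(x))/2}(\cdot)e^{-\mathfrak{s}^k(\phi(x))/2}$, so that the weighted norms of the original quantities become standard operator norms of the conjugated ones. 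Your argument survives this fix verbatim: since $\phi(0)=0$ in a normal frame, $\tilde\tau(0,0,0)=\tau(0,0,0)=\tilde F(0,0)$, so the Neumann perturbation applies to $\tilde\tau$ with no change, and the Cauchy estimates should then be applied to $\tilde a_{k,m}, \tilde A_{k,m}$, which remain analytic in $(y,z)$ for each fixed $x$. You should make this conjugation explicit in the write-up.
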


\begin{proof}
By construction,
\begin{align*}
& e^{\mathfrak{s}^k(\theta(x-y))} \mathfrak{s}^k(\tau(x,y,z)) a^{(N)}_k(x,y,z) \,dz \wedge dy \\
    = \hspace{2pt} & d(e^{\mathfrak{s}^k(\theta(x-y))} A^{(N)}_k(x,y,z)\, dy) -  e^{\mathfrak{s}^k(\theta(x-y))} \frac{1}{k^N} \partial_3 A_{k,N}(x,y,z) \,dz \wedge dy
\end{align*}
for all $x,y,z \in U_N$. So, it suffices to show that $\|A_{k,m}(x,y,z)\|_{op(E^k_x,E^k_x)}$, $\|b_{k,m}(x,z)\|_{op(E^k_x,E^k_x)}$, and \\ $\left \| \partial_3 A_{k,N}(x,y,z) \right \|_{op(E^k_x,E^k_x)}$ are $O(1)$ for all $m \leq N$ and $x,y,z$ in a sufficiently small coordinate unit disk.

Set $\tilde{\tau}(x,y,z) = e^{\phi(x)/2} \tau(x,y,z) e^{-\phi(x)/2}$. After replacing $U$ with a smaller coordinate unit disk centered at $x_0$, we can ensure that $\|\tilde{\tau}(x,y,z)\|_{op} = O(1)$ for all $x,y,z \in U$. Then, Proposition \ref{prop: k}, implies that $\| \mathfrak{s}^k (\tilde{\tau}(x,y,z)) \|_{op} =O(k)$ for all $x,y,z \in U$. We observe that 
$$
\mathfrak{s}^k(\tilde{\tau}(x,x,\overline{x})) = e^{\mathfrak{s}^k\phi(x)/2}\mathfrak{s}^k \left ( \tilde{F}(x,\overline{x})^t \right ) e^{-\mathfrak{s}^k\phi(x)/2}
$$
is a positive definite Hermitian matrix because $(E,h)$ is Griffiths positive. So, Proposition \ref{prop: k} also implies that the smallest eigenvalue of $\mathfrak{s}^k(\tilde{\tau}(x,x,\overline{x}))$ is $k$ times the smallest eigenvalue of $\tilde{\tau}(x,x,\overline{x})$. Since the eigenvalues of a matrix vary continuously with respect to the coefficients of the matrix and the largest eigenvalue of $\mathfrak{s}^k(\tilde{\tau}(x,x,\overline{x}))^{-1}$ is the reciprocal of the smallest eigenvalue of $\mathfrak{s}^k(\tilde{\tau}(x,x,\overline{x}))$, $\|\mathfrak{s}^k(\tilde{\tau}(x,x,\overline{x}))^{-1}\|_{op} = O(1/k)$ for all $x \in U$. As a result, Proposition \ref{prop: inverse} implies that we can replace $U$ with a smaller coordinate unit disk so that $\|\mathfrak{s}^k (\tilde{\tau}(x,y,z))\|_{op} = O(k)$ and $\| \mathfrak{s}^k (\tilde{\tau}(x,y,z))^{-1}\|_{op} =O(1/k)$ for all $x,y,z \in U$. Moreover, Proposition \ref{prop: inverse} implies that we can choose a smaller coordinate unit disk such that the estimates for $\|\mathfrak{s}^k (\tilde{\tau}(x,y,z))\|_{op}$ and $\| \mathfrak{s}^k (\tilde{\tau}(x,y,z))^{-1}\|_{op}$ hold for all $k$. 

Fix $x \in U$ with $|x| < 1/4$. For each $m \leq N$, set
\begin{gather*}
\tilde{A}_{k,m}(y,z) =   e^{\mathfrak{s}^k(\phi(x))/2} A_{k,m}(x,y,z) e^{-\mathfrak{s}^k(\phi(x))/2} \text{ and } \\  
\tilde{a}_{k,m}(y,z) = e^{\mathfrak{s}^k(\phi(x))/2} a_{k,m}(x,y,z) e^{-\mathfrak{s}^k(\phi(x))/2}.
\end{gather*}
By construction,
\begin{align*}
e^{\mathfrak{s}^k(\phi(x))/2} b_{k,0}(x,z) e^{-\mathfrak{s}^k(\phi(x))/2} & =  \frac{1}{k} \mathfrak{s}^k(\tilde{\tau}(x,x,z)) \tilde{g}(x,z)^{-1}, \\
\tilde{a}_{k,0}(y,z) & = \tilde{g}(y,z)\tilde{g}(x,z)^{-1} \mathfrak{s}^k(\tilde{\tau}(x,y,z))^{-1} \mathfrak{s}^k(\tilde{\tau}(x,x,z)) - \Id, \hspace{10pt} \text{ and } \\
(x-y)\tilde{A}_{k,0}(y,z) & = \tilde{a}_{k,0}(y,z).
\end{align*}
By what we have shown above, there exists a constant $C > 0$ independent of $k$ and $x$ such that 
$$
\| b_{k,0}(x,z) \|_{op(E^k_x,E^k_x)}, \|\tilde{a}_{k,0}(y,z)\|_{op} \leq C
$$
for all $y,z \in U$. Then, for any indices $\alpha,\beta \geq 0$,
\begin{align*}
\|\partial_1^\alpha \partial_2^\beta \tilde{a}_{k,0}(x,\overline{x})\|_{op} & = \left \|\frac{\alpha!\beta!}{(2\pi \sqrt{-1})^2} \int_{|\zeta-x|=1/2} \int_{|\xi-\overline{x}|=1/2} \frac{\tilde{a}_{k,0}(\zeta,\xi)}{(\zeta -x)^{\alpha + 1} (\xi-\overline{x})^{\beta+1}} \,d\zeta\,d\xi \right \|_{op} \\
    & \text{ by Cauchy's integral formula} \\
    & \leq \frac{\alpha! \beta!}{4\pi}  \left ( \int_{|\zeta-x|=1/2} \int_{|\xi-\overline{x}|=1/2} \left \| \frac{\tilde{a}_{k,0}(\zeta,\xi)}{(\zeta -y)^{\alpha + 1} (\xi-z)^{\beta+1}}\right \|_{op}^2  \,d\zeta\,d\xi \right)^{1/2} \\
    & \text{ by Lemma \ref{lem: Operator}} \\
    & \leq C \alpha!\beta! 2^{\alpha+\beta}.
\end{align*}
Then, for any $\rho < 1/4$ and $y,z \in U$ with $|y-x|,|z-\overline{x}| < \rho$,
\begin{align*}
\|\tilde{A}_{k,0}(y,z)\|_{op} & = \left \|\sum_{\alpha,\beta \geq 0} \frac{-1}{(\alpha+1)!\beta!}\partial_1^{\alpha+1} \partial_2^\beta \tilde{a}_{k,0}(x,\overline{x}) (y-x)^\alpha (z-\overline{x})^\beta \right \|_{op} \\
    & \leq \sum_{\alpha,\beta \geq 0} \frac{1}{(\alpha+1)!\beta!}\|\partial_1^{\alpha+1} \partial_2^\beta \tilde{a}_{k,0}(x,\overline{x})\|_{op} \rho^{\alpha+\beta} \\
    &\leq C \sum_{\alpha,\beta \geq 0} \frac{1}{\alpha+1} (2\rho)^{\alpha+\beta} \\
    & \leq C' (1-4\rho)^{-1}.
\end{align*}
Thus, by replacing $U$ with a smaller coordinate disk if necessary, we can find a constant $C_0 > 0$ independent of $k$ such that
$$
\|a_{k,0}(x,y,z)\|_{op(E^k_x, E^k_x)} ,
\|A_{k,0}(x,y,z)\|_{op(E^k_x, E^k_x)},
\|b_{k,0}(x,z)\|_{op(E^k_x, E^k_x)} \leq C_0
$$
for all $x,y,z \in U$. Moreover, for all $x, y,z \in U$ with $|x| < 1/4$, $|y-x|, |z-\overline{x}| < 1/2$,
\begin{align*}
\|\partial_2 \tilde{A}_{k,0}(y,z)\|_{op} & = \left \|\frac{1}{(2\pi \sqrt{-1})^2} \int_{|\zeta-x|=3/4} \int_{|\xi-\overline{x}|=3/4} \frac{\tilde{A}_{k,0}(\zeta,\xi)}{(\zeta -y) (\xi-z)^2} \,d\zeta\,d\xi \right \|_{op} \\
    & \text{ by Cauchy's integral formula} \\
    & \leq \frac{3}{8\pi}  \left ( \int_{|\zeta-x|=3/4} \int_{|\xi-\overline{x}|=3/4} \left \| \frac{\tilde{A}_{k,0}(\zeta,\xi)}{(\zeta -y) (\xi-z)^{2}}\right \|_{op}^2  \,d\zeta\,d\xi \right)^{1/2} \\
    & \text{ by Lemma \ref{lem: Operator}} \\
    & \leq 36 C_0.
\end{align*}
Therefore, by replacing $U$ with an even smaller coordinate unit disk and $C_0$ with a larger constant, we may assume that
$$
\left \| \partial_3 A_{k,0}(x,y,z) \right \|_{op(E^k_x,E^k_x)} \leq C_0
$$
for all $x,y,z \in U$. Note that the smaller coordinate disk we choose does not depend on $k$.

We proceed by induction. Let $m < N$ and suppose that there exists a constant $C_m > 0$ such that 
$$
\|A_{k,m}(x,y,z)\|_{op(E^k_x,E^k_x)}, \left \| \partial_3 A_{k,m}(x,y,z) \right \|_{op(E^k_x,E^k_x)} \leq C_m
$$
for all $x,y,z \in U$. Fix $x \in U$ with $|x| < 1/4$. By construction,
\begin{align*}
e^{\mathfrak{s}^k(\phi(x))/2} b_{k,m+1}(x,z) e^{-\mathfrak{s}^k(\phi(x))/2} & =  \partial_2 \tilde{A}_{k,m}(x,z) \tilde{g}(x,z)^{-1}, \\
\tilde{a}_{k,m+1}(y,z) & = k \tilde{g}(y,z)\tilde{g}(x,z)^{-1} \mathfrak{s}^k(\tilde{\tau}(x,y,z))^{-1} \partial_2 \tilde{A}_{k,m}(x,z), \hspace{10pt} \text{ and } \\
(x-y)\tilde{A}_{k,m+1}(y,z) & = \tilde{a}_{k,m+1}(y,z) - k \mathfrak{s}^k(\tilde{\tau}(x,y,z))^{-1} \partial_2 \tilde{A}_{k,m}(y,z).
\end{align*}
As a result, there exists a constant $C > 0$ independent of $k$ and $x$ such that
$$
\| b_{k,m+1}(x,z)\|_{op(E^k_x,E^k_x)}, \|\tilde{a}_{k,m+1}(y,z)\|_{op} \leq C
$$
for all $y,z \in U$ with $|y-x|,|z-\overline{x}| \leq 1/2$.

Set
$$
f_k(y,z) = \tilde{a}_{k,m+1}(y,z) - k \mathfrak{s}^k(\tilde{\tau}(x,y,z))^{-1} \partial_2 \tilde{A}_{k,m}(y,z).
$$
By replacing $C$ with a larger constant and $U$ with a smaller unit disk if necessary, we may assume that $\|f_k(y,z)\|_{op} \leq C$
for all $y,z \in U$. We use arguments similar to the ones above. For any indices $\alpha,\beta \geq 0$,
\begin{align*}
\|\partial_1^\alpha \partial_2^\beta f_k (x,\overline{x})\|_{op}  \leq \alpha!\beta! C 2^{\alpha+\beta}.
\end{align*}
Then, for any $\rho < 1/4$ and $y,z \in U$ with $|y-x|,|z-\overline{x}| < \rho$,
\begin{align*}
\|\tilde{A}_{k,m+1}(y,z)\|_{op}  \leq C' (1-4\rho)^{-1}.
\end{align*}
After replacing $U$ with a smaller coordinate unit disk if necessary, we can find a constant $C_{m+1} > 0$ independent of $k$ such that
$$
\|a_{k,m+1}(x,y,z)\|_{op(E^k_x, E^k_x)} ,
\|A_{k,m+1}(x,y,z)\|_{op(E^k_x, E^k_x)},
\|b_{k,m+1}(x,z)\|_{op(E^k_x, E^k_x)} \leq C_{m+1}
$$
for all $x,y,z \in U$. This implies that, for all $x, y,z \in U$ with $|x| < 1/4$, $|y-x|, |z-\overline{x}|\leq 1/2$,
\begin{align*}
\|\partial_2 \tilde{A}_{k,m+1}(y,z)\|_{op}  \leq 36 C_{m+1}.
\end{align*}
Therefore, by replacing $U$ with an even smaller coordinate unit disk and $C_{m+1}$ with a larger constant, we may assume that
$$
\left \| \partial_3 A_{k,m+1}(x,y,z) \right \|_{op(E^k_x,E^k_x)} \leq C_{m+1}
$$
for all $x,y,z \in U$. We again note that the smaller coordinate disk we choose does not depend on $k$.

\end{proof}

We define the local section $K_k^{(N)}: U \times U \to E^k \boxtimes \overline{E^k}$ by
\begin{equation*} 
K_k^{(N)}(y,x) = \frac{1}{2\pi} \overline{ e^{\mathfrak{s}^k\psi(x,\overline{y}))}b_k^{(N)}(x,\overline{y})}.
\end{equation*}
Then, \eqref{eqn: b_k id}, Theorem \ref{thm: local reprod mod}, and Theorem \ref{thm: bounds} yield the following corollary.

\begin{cor} \label{cor: local reprod mod}
Let $x_0 \in X$ and $N \in \mathbb{N}$. Then, there exists a coordinate unit disk $U$ centered at $x_0$ such that $K_k^{(N)}$ is a reproducing kernel modulo $O(k^{-N})$ for $H_{U,k}$. More precisely, 
$$
\left \|u(x) - \left (\chi u, K_k^{(N)}(\cdot,x) \right )_{U,k}  \right \|_{\Sym^kh(x)} = O ( {k^{-N}}  ) {(u,u)_{U,k}}^{1/2}
$$
for all $u \in H_{U,k}$, $x \in U$ with $|x|<1/4$, and $k \in \mathbb{N}$.
\end{cor}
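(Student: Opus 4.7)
The plan is to derive the corollary by composing three pieces that are already in place: the identity \eqref{eqn: b_k id}, the reproducing property from Theorem \ref{thm: local reprod mod}, and the amplitude estimates from Theorem \ref{thm: bounds}. The substantive work is done; the corollary is essentially the result of substituting $K_k^{(N)}$ for the product $\sigma_k(\cdot,x)\overline{\Id + a_k^{(N)}(x,\cdot,\overline{\cdot})}$.

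First, I would choose a single coordinate unit disk $U$ centered at $x_0$ small enough for all earlier hypotheses to hold simultaneously. Proposition \ref{prop: local reprod} requires one shrinking (so that $\sigma_k$ is a reproducing kernel modulo $O(e^{-\delta k})$ for $H_{U,k}$), and Theorem \ref{thm: bounds} imposes a further finite chain of shrinkings, one per step of the inductive construction of $a_{k,m}$, $A_{k,m}$, $b_{k,m}$ for $m = 0, 1, \dotsc, N$. Because $N$ is fixed and each shrinking is independent of $k$, the intersection is still a coordinate unit disk on which every bound from earlier is uniform in $k$. On this $U$, Theorem \ref{thm: bounds} makes $a_k^{(N)}$ a negligible amplitude modulo $O(k^{-N})$, so Theorem \ref{thm: local reprod mod} yields
$$
\left\|u(x) - \bigl(\chi u,\, \sigma_k(\cdot,x)\,\overline{\Id + a_k^{(N)}(x,\cdot,\overline{\cdot})}\bigr)_{U,k} \right\|_{\Sym^k h(x)} = O(k^{-N})\, \|u\|_{U,k}
$$
for every $u \in H_{U,k}$ and every $x \in U$ with $|x| < 1/4$. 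To finish, I would invoke \eqref{eqn: b_k id} to rewrite the integrand as $K_k^{(N)}(y,x)$, which turns the above display into the statement of the corollary.

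I do not anticipate a serious obstacle, since the hard analysis---the Stokes-type computation behind $\sigma_k$, the H\"older-type bound in Theorem \ref{thm: local reprod mod}, and the inductive solution of the power series equations \eqref{eq: power series}---has already been carried out. The only point of care is the bookkeeping of neighborhoods: the shrinkings required by the earlier results must be performed before $k$ is taken, so that the implicit constants in $O(k^{-N})$ depend only on $N$, $U$, and the geometric data, not on $k$.
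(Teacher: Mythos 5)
Your proposal is correct and follows exactly the route the paper itself takes: the paper gives no explicit proof but simply states that equation \eqref{eqn: b_k id}, Theorem \ref{thm: local reprod mod}, and Theorem \ref{thm: bounds} together yield the corollary, which is precisely the chain of substitutions you spell out. Your added remark on fixing a single $U$ small enough for all earlier hypotheses before letting $k$ vary is the right point of care and is consistent with the paper's construction.
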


\section{Global Asymptotic Expansion}

We have constructed local reproducing kernels that are holomorphic in the first variable and anti-holomorphic in the second variable. In this section, we will show that the local reproducing kernels approximate the global Bergman kernel. This was shown in \cite{BBS} in the case when $(E,h)$ is a positive line bundle. 

Let $(\cdot, \cdot): H^0(X,\Sym^kE) \times H^0(X,\Sym^kE) \to \mathbb{C}$ denote the inner product
$$
(\cdot, \cdot) = \int_X \langle \cdot, \cdot \rangle_{\Sym^kh} \omega.
$$
We set $d_k = \dim(H^0 (X, \Sym^k E))$. Choose an orthonormal basis $\{s_i\}_{i=1}^{d_k}$ of $H^0(X,\Sym^kE)$. The Bergman kernel $K_k$ is the section of the vector bundle $\Sym^k E \boxtimes \Sym^k \overline{E}$ given by
$$
K_k(x,y) = \sum_{i=1}^{d_k} s_i(x) \otimes \overline{s_i}(y)
$$
and satisfies the property that, for any element $s \in H^0(X,\Sym^kE)$ and $x \in X$,
$$
s(x) = (s, K_k(\cdot,x)).
$$
The Bergman function $B_k$ is the section of $(\Sym^kE)^* \boxtimes \Sym^kE$ obtained from $\overline{K_k}$ by using the isomorphism $\Sym^k \overline{E} \cong (\Sym^k E)^*$ induced by the metric $\Sym^k h$.

Let $\{e_1, \dotsc, e_{r_k}\}$ and $\{f_1, \dotsc, f_{r_k}\}$ be local frames of $\Sym^kE$ near $x$ and $y$, respectively. We can view $K_k(x,y)$ as an $r_k \times r_k$ matrix, the $i,j$-th entry of which corresponds to the coefficient of $e_i \otimes \overline{f_j}$, and $B_k(x,y)$ as a matrix acting on row vectors. Then, we see that 
$$
B_k(x,y) =  e^{-\mathfrak{s}^k(\phi(x))} \overline{K_k(x,y)} .
$$
In the orthonormal frames given by the change of basis matrix $e^{-\mathfrak{s}^k(\phi(x))/2}$ and $e^{\mathfrak{s}^k(\phi(y))/2}$, the Bergman function can be written as
\begin{equation} \label{eq : AA^*}
B_k(x,y) = e^{-\mathfrak{s}^k(\phi(x))/2} \overline{K_k(x,y)} e^{-\mathfrak{s}^k(\phi(y))/2} = \overline{A(x)}A(y)^t,
\end{equation}
where $A(z)$ is an $r_k \times d_k$ matrix whose $i$-th column is $s_i(z)$ written with respect to the chosen orthonormal frame. It follows from the Singular Value Decomposition Theorem that $\|B_k(x,x)\|_{op} = \|A(x)^t\|_{op}^2$. Moreover,
$$
\|A(x)^t\|_{op}^2 = \sup_{(v_1, \dotsc, v_{d_k}) \in \mathbb{C}^{d_k}\setminus \{0\}} \frac{ \left \| \sum_i v_is_i(x) \right \|_{\Sym^kh(x)}^2}{\sum_i |v_i|^2} = \sup_{s \in H^0(X,\Sym^kE)\setminus \{0\}} \frac{ \|s(x)\|_{\Sym^kh(x)}^2}{(s,s)}.
$$
As a result,
\begin{equation} \label{eq: extremal char}
\|B_k(x,x)\|_{op} = \sup_{s \in H^0(X,\Sym^kE) \setminus \{0\}} \frac{\|s(x)\|_{\Sym^kh(x)}^2}{(s,s)}.
\end{equation}
This is the so-called extremal characterization of the Bergman function.

We will need the following definition and lemma from \cite{LL}.

\begin{defn}
Let $U$ be a coordinate neighborhood of some point $x_0 \in X$. Let $E'$ be a holomorphic vector bundle over $X$ equipped with a real-analytic Hermitian metric $h'$. We say that a frame $\{e_1, \dotsc, e_{r'}\}$ of $E'$ on $U$ is a \emph{K-frame centered at $x_0$} if
$$
h'_{\mu\nu}(x_0) = \delta_{\mu\nu} \hspace{10pt} \text{ and } \hspace{10pt} \frac{\partial^I h'_{\mu\nu}}{\partial y^I} (x_0) = 0
$$
for any $I \geq 0$.
\end{defn}

\begin{lem} \label{lem: K-frame}
Let $E'$ be a holomorphic vector bundle equipped with a Hermitian metric $h'$. For all sufficiently small coordinate unit disk $U$ centered at $x_0$, there exists a family of $K$-frames $\{e_1(x), \dotsc, e_{r'}(x)\}$ indexed by $U$ such that the frames vary smoothly with respect to $x \in U$. Moreover, for each $x \in U$, we can write $h' = e^{-\phi_x}$ with respect to the frame $\{e_1(x), \dotsc, e_{r'}(x)\}$ and, for any $y \in U$, $\|\phi_x(y)\|_{op} = O(|y-x|^2)$ uniformly for all $x \in U$.
\end{lem}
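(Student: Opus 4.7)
The plan is to produce the $K$-frame centered at each point $x$ by starting from a fixed holomorphic frame on $U$ and applying a $y$-holomorphic gauge transformation $G_x(y)$ constructed from the polarization of $h'$. After shrinking $U$ so that $E'$ is holomorphically trivial, fix a holomorphic frame $\{\tilde e_1,\dotsc,\tilde e_{r'}\}$ and write $h'$ in this frame as a real-analytic, positive-definite Hermitian matrix-valued function $h(y,\overline{y})$. Using the uniqueness argument already employed in the proof of Lemma \ref{lem: diastatic}, after further shrinking $U$ we obtain the unique polarization $\tilde h(y,\overline{z})$ which is holomorphic in $y$, antiholomorphic in $z$, and satisfies $\tilde h(y,\overline{y})=h(y,\overline{y})$. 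For each $x\in U$, I would then define
\[
G_x(y):=\tilde h(y,\overline{x})^{-1}\,h(x,\overline{x})^{1/2},
\]
which is holomorphic in $y$ (and real-analytic in $x$) and well-defined on a polydisc about $x$ since $\tilde h(x,\overline{x})=h(x,\overline{x})$ is positive definite. The candidate $K$-frame is $e_\mu(x)(y):=\sum_\nu G_x(y)^\nu_\mu\,\tilde e_\nu(y)$.

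Next I would verify the defining conditions of the $K$-frame. Its Gram matrix in the polarized picture is $\tilde h'_x(y,\overline{z})=\overline{G_x(z)}^{\,t}\tilde h(y,\overline{z})\,G_x(y)$. Setting $z=x$ collapses the left factor to $h(x,\overline{x})^{1/2}$ and the remaining product telescopes:
\[
\tilde h'_x(y,\overline{x})=h(x,\overline{x})^{1/2}\,\tilde h(y,\overline{x})^{-1}\,\tilde h(y,\overline{x})\,\tilde h(y,\overline{x})^{-1}\,h(x,\overline{x})^{1/2}=\Id.
\]
In particular $h'_x(x,\overline{x})=\Id$, and since every pure holomorphic derivative $\partial_y^I h'_x$ at $x$ equals $\partial_y^I\tilde h'_x(y,\overline{x})|_{y=x}$, all such derivatives of order $|I|\ge 1$ vanish. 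By the Hermitian symmetry $\overline{\tilde h'_x(y,\overline{z})}^{\,t}=\tilde h'_x(z,\overline{y})$ the pure antiholomorphic derivatives also vanish.

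For the uniform estimate $\|\phi_x(y)\|_{op}=O(|y-x|^2)$, the two identities $\tilde h'_x(y,\overline{x})=\Id=\tilde h'_x(x,\overline{y})$ force the Taylor expansion of $h'_x(y,\overline{y})-\Id$ at $x$ to contain only monomials that carry both a $(y-x)$ and a $(\overline{y}-\overline{x})$ factor, hence it is at least quadratic in $|y-x|$. The Taylor coefficients depend real-analytically on $x$ and so are bounded on $\overline U$; combined with a common radius of convergence given by a $C_{op,M,\rho}$-bound (applied uniformly on the compact $\overline U$), this yields $\|h'_x(y,\overline{y})-\Id\|_{op}\le C|y-x|^2$ uniformly for $x\in U$ and $y$ in a slightly smaller neighborhood. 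Writing $h'_x=e^{-\phi_x}$ and using $\|\log(\Id+A)\|_{op}\le 2\|A\|_{op}$ for small $A$ finishes the proof.

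The step that requires the most care is the bookkeeping for the polarized metric: one must keep $y$ and $\overline{z}$ as independent variables throughout the construction of $G_x$, exploit the holomorphic extension only on a bi-disc where $\tilde h(y,\overline{x})$ remains invertible, and only identify $z=y$ at the end in order to read off the $K$-frame conditions for the genuine metric $h'$.
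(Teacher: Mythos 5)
Your argument follows the paper's proof essentially verbatim: your gauge transform $G_x(y)=\tilde h(y,\overline x)^{-1}\,h(x,\overline x)^{1/2}$ is, in the paper's potentials $h=e^{-\phi}$, $\tilde h(y,\overline z)=e^{-\psi(y,\overline z)}$, precisely the change of frame $e^{\psi(y,\overline x)}e^{-\phi(x)/2}$ (up to the row-vector transpose convention), and both the $K$-frame verification and the uniform $O(|y-x|^2)$ bound exploit the polarization identity $\tilde h'_x(y,\overline x)=\Id$ in the same way as Lemma \ref{lem: diastatic} does. One bookkeeping slip: since $G_x(x)=\tilde h(x,\overline x)^{-1}h(x,\overline x)^{1/2}=h(x,\overline x)^{-1/2}$, the collapsed left factor $\overline{G_x(x)}^{\,t}$ is $h(x,\overline x)^{-1/2}$, not $h(x,\overline x)^{1/2}$, and your displayed chain carries an extra $\tilde h(y,\overline x)^{-1}$; as written it simplifies to $h(x,\overline x)^{1/2}\tilde h(y,\overline x)^{-1}h(x,\overline x)^{1/2}$, which is not the identity for $y\neq x$. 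The correct telescoping is $h(x,\overline x)^{-1/2}\,\tilde h(y,\overline x)\,\tilde h(y,\overline x)^{-1}\,h(x,\overline x)^{1/2}=\Id$, after which everything you say goes through. A minor stylistic difference is that you bound $h'_x-\Id$ and then pass to $\phi_x$ via $\|\log(\Id+A)\|_{op}\le 2\|A\|_{op}$, whereas the paper bounds $D(x,y)=-\phi_x(y)$ directly from the uniform Taylor-coefficient estimate $\|D_{\alpha\overline\beta}(x)\|_{op}\le M(\alpha+\beta)!\,5^{-\alpha-\beta}$; both routes are valid and give the same conclusion, but the direct estimate avoids the extra matrix-logarithm step.
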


\begin{proof}
Choose any normal holomorphic frame at $x_0 \in X$. Then, there exists a coordinate unit disk $U$ centered at $x_0$ small enough so that we can write $h' = e^{-\phi}$ on $U$ with respect to the chosen frame. We consider the change of frame given by the matrix $e^{\psi(y,\overline{x})^t} e^{-\phi(x)^t/2}$. For any fixed $x \in U$, the change of frame is holomorphic in $y$ and Lemma \ref{lem: diastatic} implies that, after replacing $U$ with a smaller coordinate unit ball if necessary, the resulting frame is a $K$-frame centered at $x \in U$.

In fact, with respect to the $K$-frame centered at $x$, we can write $h' = e^{D(x,y)}$ and 
$$
D(x,y) = \sum_{\alpha,\beta > 0} D_{\alpha\overline{\beta}}(x) (y-x)^\alpha (\overline{y} - \overline{x})^\beta
$$
for all $y \in U$. By shrinking $U$ if necessary, we may assume that $\| D_{\alpha \overline{\beta}}(x) \|_{op} \leq M (\alpha + \beta)! 5^{-\alpha -\beta}$ for all $x \in U$ and for all $\alpha,\beta \geq 0$. As a result, 
\begin{equation*}
\begin{split}
\|D(x,y)\|_{op} & \leq \sum_{\alpha,\beta > 0} \|D_{\alpha\overline{\beta}}(x)\|_{op} |y-x|^{\alpha+\beta} \\
    & = |y-x|^2 \sum_{\alpha,\beta > 0} \|D_{\alpha\overline{\beta}}(x)\|_{op} |y-x|^{\alpha+\beta-2} \\
    & \leq |y-x|^2 \frac{\frac{4}{5}M}{5 - 2|y-x|} \\
    & = O(|y-x|^2)
\end{split}
\end{equation*}
uniformly for all $x \in U$.
\end{proof}

Let $U$ be a coondinate unit disk centered at $x_0 \in X$. For any $x \in U$ and $a > 0$ sufficiently small, we denote by $\overline{B}_{a}(x)$ the subset of $U$ that is mapped onto the polydisk of radius $a$ centered at $x$ under the coordinate system. We will need the following variant of Lemma $4.1$ in \cite {RT}.

\begin{lem} \label{lem: standard estimate}
Let $X$ be a K\"ahler manifold of dimension $n$. Let $x_0 \in X$ be given and let $U$ denote a coordinate polydisk of radius $1$ centered at $x_0$. Then, there exists a constant $C$ such that 
$$
|f(x)| \leq C(a\|\overline{\partial}f\|_{C^0(\overline{B}_a(x))} + a^{-n}\|f\|_{L^2(\overline{B}_a(x))})
$$
for any smooth function $f$, $x \in U$ with $|x| < 1/2$, and sufficiently small $a > 0$.
\end{lem}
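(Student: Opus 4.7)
The plan is to reduce to the holomorphic case, where the estimate is just the sub-mean value inequality, by solving the $\overline{\partial}$-equation on a small polydisk with $C^0$ control. Concretely, I would aim to write $f = h + g$ on $\overline{B}_a(x)$, where $h$ is holomorphic and $g$ is a correction with $\|g\|_{C^0(\overline{B}_a(x))} \leq C a \|\overline{\partial} f\|_{C^0(\overline{B}_a(x))}$. Once this is done, the triangle inequality gives
\[
|f(x)| \leq |h(x)| + |g(x)| \leq |h(x)| + C a \|\overline{\partial} f\|_{C^0(\overline{B}_a(x))},
\]
and it remains to bound $|h(x)|$ by $a^{-n} \|h\|_{L^2}$ and transfer this back to $f$.

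For the holomorphic piece, I would use the fact that $|h|^2$ is plurisubharmonic, so on a Euclidean polydisk it satisfies the sub-mean value property $|h(x)|^2 \leq \text{Vol}(B_a(x))^{-1} \int_{B_a(x)} |h|^2 \, dV_{\text{Eucl}}$. Since $|x| < 1/2$ and $a$ is small, $\overline{B}_a(x) \Subset U$, and the K\"ahler volume form $\omega^n/n!$ is comparable to the Euclidean volume form with uniform constants, so this gives $|h(x)| \leq C a^{-n} \|h\|_{L^2(\overline{B}_a(x))}$. Combining with $\|h\|_{L^2} \leq \|f\|_{L^2} + \|g\|_{L^2}$ and the crude estimate $\|g\|_{L^2(B_a)} \leq C a^n \|g\|_{C^0} \leq C a^{n+1} \|\overline{\partial} f\|_{C^0}$ yields the desired inequality, after noting that the $a \cdot a \|\overline{\partial} f\|_{C^0}$ term can be absorbed into $a \|\overline{\partial} f\|_{C^0}$ for $a$ small.

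The main step to flesh out is the solution of $\overline{\partial} g = \overline{\partial} f$ on the polydisk with $\|g\|_{C^0} \leq Ca \|\overline{\partial} f\|_{C^0}$. On a Euclidean polydisk in $\mathbb{C}^n$ this can be done explicitly by iterating the one-variable Cauchy--Green integral: for $n=1$ (the case of primary interest in this paper, since $X$ is a Riemann surface), set
\[
g(z) = \frac{1}{2\pi \sqrt{-1}} \int_{B_a(x)} \frac{\overline{\partial} f(w)}{w - z} \, dw \wedge d\overline{w},
\]
which satisfies $\overline{\partial} g = \overline{\partial} f$ and $\|g\|_{C^0(\overline{B}_a(x))} \leq Ca \|\overline{\partial} f\|_{C^0(\overline{B}_a(x))}$ because the kernel $1/(w-z)$ is $L^1$ on $B_a(x)$ with norm $O(a)$. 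For higher $n$, one applies this in each variable successively, or cites H\"ormander's $L^2$ estimates on the polydisk together with elliptic bootstrapping.

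The main technical point to watch is uniformity of constants as $x$ varies in $\{|x| < 1/2\}$ and $a \to 0$. This is automatic once one fixes the coordinate chart: the Euclidean/K\"ahler volume comparison, the mean value inequality constant, and the $\overline{\partial}$-solution operator norm are all controlled by quantities on the fixed precompact set $\{|x| \leq 3/4\}$, and one takes $a$ small enough that $\overline{B}_a(x) \subset U$ for every such $x$. I expect no serious obstacle beyond verifying this uniformity; the substance of the lemma is the standard combination of the sub-mean value inequality with an explicit $\overline{\partial}$-inverse, as in Ruan--Tian, and the novelty here is only the additive $C^0$ correction allowing $f$ to be merely smooth rather than holomorphic.
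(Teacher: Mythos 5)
The paper does not prove this lemma; it cites it as a variant of Lemma~4.1 in \cite{RT}, so your proof can only be compared against that standard reference. Your argument is correct, and it is essentially the standard one. In dimension one---the only case the paper actually uses---your decomposition $f = h + g$ with $g(z) = \frac{1}{2\pi\sqrt{-1}}\int_{B_a(x)} \frac{\partial f/\partial\overline{w}}{w-z}\,dw\wedge d\overline{w}$ is precisely the Cauchy--Pompeiu representation: $h$ is the Cauchy integral of $f$ over $\partial B_a(x)$, which is automatically holomorphic, and $g$ is the solid integral against $\overline{\partial}f$. The $L^1$ bound $\|1/(w-z)\|_{L^1(B_a)} = O(a)$ gives $\|g\|_{C^0}\le Ca\|\overline{\partial}f\|_{C^0}$, the sub-mean-value inequality for the subharmonic function $|h|^2$ (iterated one variable at a time on the polydisk) gives $|h(x)|\le Ca^{-n}\|h\|_{L^2(dV_{\mathrm{Eucl}})}$, and the comparability of $\omega^n/n!$ with $dV_{\mathrm{Eucl}}$ on the fixed compact set $\{|x|\le 3/4\}$ handles the volume form, with all constants uniform in $x$ and $a$ once $a$ is small enough that $\overline{B}_a(x)\subset U$. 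Your final bookkeeping $\|h\|_{L^2}\le\|f\|_{L^2}+\|g\|_{L^2}$ and $\|g\|_{L^2}\le Ca^{n}\|g\|_{C^0}$ closes the estimate without even needing absorption. The only soft spot is the throwaway remark that for $n>1$ one ``applies this in each variable successively'': solving $\overline{\partial}g=\alpha$ for a $(0,1)$-form $\alpha$ with uniform $C^0$ bounds on a polydisk is genuinely more delicate than iterating the scalar Cauchy transform and requires using $\overline{\partial}$-closedness of $\alpha$ at each step (or citing a bounded solution operator such as Henkin's); but since you explicitly flag the alternative of citing H\"ormander and since the paper works on a Riemann surface, this does not affect the proof as needed here.
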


\begin{prop} \label{prop: standard estimate}
Let $X$ be a compact Riemann surface and $E$ a holomorphic vector bundle with a Hermitian metric $h$. Then, there exists a constant $C$ such that, for any section $s \in C^\infty(\Sym^kE)$ and $x \in X$, 
$$
\|s(x)\|_{\Sym^kh(x)} \leq C \left ( k^{r-(3/2)} \|\overline{\partial} s\|_{C^0(X)} + k^{r/2} (s,s)^{1/2} \right ).
$$
Additionally, there exists a constant $C'$ such that, for any $s \in H^0(X,\Sym^kE)$ and $x \in X$, 
$$
\|s(x)\|_{\Sym^kh(x)} \leq C' k^{1/2}(s,s)^{1/2}
$$
\end{prop}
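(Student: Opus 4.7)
The plan is to handle the two inequalities separately. The first is a general elliptic-type bound that I would derive by working in a K-frame and applying Lemma~\ref{lem: standard estimate} to the components of $s$; the second, a sharper bound for holomorphic sections, uses a direct sub-mean-value argument which crucially exchanges the sum over components with the integral and thereby avoids picking up a factor of $r_k$.

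For the first inequality, around each point $x \in X$ I would use Lemma~\ref{lem: K-frame} to choose a coordinate disk and a smoothly varying family of K-frames $\{e_\mu\}_{\mu=1}^{r_k}$ of $\Sym^k E$ centered at $x$, so that $s = \sum_\mu s^\mu e_\mu$ and $\|s(x)\|_{\Sym^k h(x)}^2 = \sum_\mu |s^\mu(x)|^2$. The key technical step is to show that on the ball $B_a(x)$ of radius $a = k^{-1/2}$ the metric $\Sym^k h(y) = e^{-\mathfrak{s}^k \phi_x(y)}$ is uniformly (in $k$) comparable to the identity: Lemma~\ref{lem: K-frame} gives $\|\phi_x(y)\|_{op} = O(|y-x|^2) = O(k^{-1})$ on $B_a$, and Proposition~\ref{prop: k} then yields $\|\mathfrak{s}^k \phi_x(y)\|_{op} = O(1)$. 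With this uniform comparability in hand, Lemma~\ref{lem: standard estimate} applied to each smooth component $s^\mu$ gives $|s^\mu(x)| \leq C(k^{-1/2} \|\overline{\partial} s^\mu\|_{C^0(B_a)} + k^{1/2} \|s^\mu\|_{L^2(B_a)})$. Squaring, summing over $\mu$, and using the metric comparability to pass between the component-wise norms and the global $\Sym^k h$-norms produces the claimed bound; the powers of $k$ in the two terms arise from the factor $r_k = O(k^{r-1})$, which enters whenever one passes from component-wise $C^0$-bounds on $\overline{\partial} s^\mu$ to the global $C^0$-norm of $\overline{\partial} s$.

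For the second inequality, in the K-frame each $s^\mu$ is a holomorphic function on $U$, so $|s^\mu|^2$ is subharmonic and the sub-mean-value inequality on $B_a(x)$ gives $|s^\mu(x)|^2 \leq \frac{1}{\pi a^2} \int_{B_a(x)} |s^\mu(y)|^2 \, dA(y)$. Summing over $\mu$ and interchanging the sum with the integral,
\[
\|s(x)\|_{\Sym^k h(x)}^2 = \sum_\mu |s^\mu(x)|^2 \leq \frac{1}{\pi a^2} \int_{B_a(x)} \sum_\mu |s^\mu(y)|^2 \, dA(y) \leq \frac{C}{a^2} (s,s),
\]
where the last step uses the uniform comparability of $\Sym^k h$ to the identity from the previous paragraph. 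With $a^{-2} = k$ this yields $\|s(x)\|^2 \leq C' k (s,s)$. The essential distinction from the general case is that interchanging $\sum_\mu$ with $\int_{B_a}$ is free: the quantity $\sum_\mu \|s^\mu\|_{L^2(B_a)}^2$ is directly bounded by a multiple of $(s,s)$, with no $r_k$ factor appearing.

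The main obstacle is the uniform-in-$k$ comparability of $\Sym^k h$ to the identity on $B_{k^{-1/2}}(x)$ in a K-frame: the radius $a = k^{-1/2}$ is precisely calibrated so that the factor of $k$ from $\mathfrak{s}^k$ in Proposition~\ref{prop: k} balances the $a^2 = k^{-1}$ coming from the quadratic vanishing in Lemma~\ref{lem: K-frame}; neither a larger nor a smaller ball would serve. Once this comparability is established, both inequalities reduce to routine sub-mean-value and elliptic computations.
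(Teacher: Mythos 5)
Your proposal is correct and follows essentially the same route as the paper: K-frames centered at $x$, uniform comparability of $\Sym^k h$ to the identity on polydisks of radius $k^{-1/2}$ (via the $O(|y-x|^2)$ vanishing from Lemma~\ref{lem: K-frame} balanced against the factor of $k$ from Proposition~\ref{prop: k}), and then Lemma~\ref{lem: standard estimate} applied componentwise. Two minor remarks. First, you should apply Lemma~\ref{lem: K-frame} to $(E,h)$ and then pass to $\Sym^k h = e^{-\mathfrak{s}^k(\phi_x)}$, exactly as you then do in the next sentence; applying it directly to $(\Sym^k E, \Sym^k h)$ as you first phrase it would produce constants $M,\rho$ that depend on $k$, which is precisely what you need to avoid. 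Second, your square-then-sum treatment of the first inequality is actually slightly sharper than what is stated (and than what the paper does): since $\sum_\mu \|s^\mu\|_{L^2(B_a)}^2 \leq C(s,s)$ with no $r_k$ factor, your $L^2$ term comes out as $k^{1/2}(s,s)^{1/2}$ rather than $k^{r/2}(s,s)^{1/2}$, and the $\overline{\partial}$ term as $k^{r/2-1}$ rather than $k^{r-3/2}$ (the $r_k$ entering as $\sqrt{r_k}$ under the square root, not as $r_k$). Since $r \geq 1$ these dominate the stated exponents, so the claim still follows; the paper's larger exponents result from the lossier step $(\sum_i |s^i|^2)^{1/2} \leq \sum_i |s^i|$ before invoking Lemma~\ref{lem: standard estimate}. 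Your sub-mean-value argument for the holomorphic case is equivalent to the paper's use of Lemma~\ref{lem: standard estimate} with $\overline{\partial}s^\mu = 0$, and you have correctly identified the key point that keeping the sum over $\mu$ inside the integral avoids the $r_k$ factor.
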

\begin{proof}
Let $x_0 \in X$ be given. Since $X$ is compact, it suffices to prove the estimate in some neighborhood of $x_0$. By Lemma \ref{lem: K-frame} we can find a coordinate unit disk $U$ centered at $x_0$ and $K$-frames of $E$ centered at $x$ for each $x \in U$ such that $h = e^{-\phi_x(y)}$ with $\|\phi_x(y)\|_{op} = O(|y-x|^2)$ uniformly for all $x \in U$. We know that $\Sym^k h = e^{-\mathfrak{s}^k(\phi_x(y))}$. By Proposition \ref{prop: k}, there exists a constant $C > 0$ such that
\begin{equation} \label{eq : metric bounds}
\frac{1}{C} \Id \leq \Sym^k h \leq C \Id
\end{equation}
for all $x,y \in U$ with $|x| < 1/2$ and $|y-x| < 2k^{-1/2}$ and all $k \in \mathbb{N}$ sufficiently large.

Let $s$ be any smooth section of $\Sym^kE$. Let $x \in U$ with $|x| < 1/2$ be fixed. We write $s = (s^1, \dotsc, s^{r_k})$ with respect to the $K$-frame of $\Sym^kE$ centered at $x$. 
\begin{align*}
\|s(x)\|_{\Sym^k h(x)} & \leq  \left ( C \sum_{i=1}^{r_k} |s^i(x)|^2 \right )^{1/2} \\
    & \hspace{10pt} \text{ by \eqref{eq : metric bounds}} \\
    & \leq C^{1/2} \sum_{i=1}^{r_k} |s^i(x)| \\
    & \leq C' \sum_{i=1}^{r_k} \left ( k^{-1/2} \|\overline{\partial} s^i\|_{C^0 \left (\overline{B}_{k^{-1/2}}(x) \right )} + k^{1/2} \|s^i\|_{L^2 \left (\overline{B}_{k^{-1/2}}(x) \right )}  \right ) \\
    & \hspace{10pt} \text{ by Lemma \ref{lem: standard estimate}}.
\end{align*}
We observe that
\begin{align*}
k^{1/2} \sum_{i=1}^{r_k} \|s^i\|_{L^2\left (\overline{B}_{k^{-1/2}}(x) \right )} 
    & \leq C'' k^{ r/2} \left (  \sum_{i=1}^{r_k} \|s^i\|_{L^2\left (\overline{B}_{k^{-1/2}}(x) \right )}^2  \right )^{1/2} \\
    & \hspace{10pt} \text{ by H\"older's inequality and the fact that $r_k = O(k^{r-1})$} \\
    & \leq C^{1/2} C'' k^{r/2} \left (\int_{\overline{B}_{k^{-1/2}}(x)} \langle s, s \rangle_{\Sym^k h} \, \omega \right )^{1/2} \\
    & \hspace{10pt} \text{ by \eqref{eq : metric bounds}} \\
    & \leq C^{1/2} C'' k^{r/2} (s,s)^{1/2}.
\end{align*}
Additionally,
\begin{align*}
\sum_{i=1}^{r_k} k^{-1/2} \|\overline{\partial} s^i\|_{C^0 \left (\overline{B}_{k^{-1/2}}(x) \right )} & \leq \sum_{i=1}^{r_k} k^{-1/2} C^{1/2} \|\overline{\partial} s\|_{C^0 \left (\overline{B}_{k^{-1/2}}(x) \right )} \\
    & \hspace{10pt} \text{ by \eqref{eq : metric bounds}} \\
    & \leq C^{1/2} C''' k^{r-(3/2)} \|\overline{\partial}s\|_{C^0 \left (\overline{B}_{k^{-1/2}}(x) \right )} \\
    & \leq C^{1/2} C''' k^{r-(3/2)} \|\overline{\partial}s\|_{C^0(X)}
\end{align*}
This proves the first inequality.

Next, let $s$ be a holomorphic section of $\Sym^kE$. Let $x \in U$ with $|x| < 1/2$ be fixed. We write $s = (s^1, \dotsc, s^{r_k})$ with respect to the $K$-frame of $\Sym^kE$ centered at $x$. 
\begin{align*}
\|s(x)\|_{\Sym^k h(x)} & \leq  \left ( C \sum_{i=1}^{r_k} |s^i(x)|^2 \right )^{1/2}  \\
    & \hspace{10pt} \text{ by \eqref{eq : metric bounds}} \\
    & \leq C^{1/2} C' k^{1/2} \left ( \sum_{i=1}^{r_k}\int_{\overline{B}_{k^{-1/2}}(x)}  |s^i|^2 \, \omega \right )^{1/2} \\
    & \hspace{10pt} \text{ by Lemma \ref{lem: standard estimate}} \\
    & \leq CC'k^{1/2} \left (\int_{\overline{B}_{k^{-1/2}}(x)} \langle s, s \rangle_{\Sym^k h} \, \omega \right )^{1/2} \\
    & \hspace{10pt} \text{ by \eqref{eq : metric bounds}} \\
    & \leq CC'k^{1/2} (s,s)^{1/2}.
\end{align*}
This proves the second inequality.
\end{proof}

Proposition \ref{prop: standard estimate}, \eqref{eq: extremal char}, and the fact that $X$ is compact imply the following global bound.
\begin{thm} \label{thm: global bound}
There exists a constant $C > 0$ such that $\|B_k(x,x)\|_{op} \leq Ck$ for all $x \in X$.
\end{thm}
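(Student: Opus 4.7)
The plan is to combine the three ingredients listed directly after the statement. The extremal characterization \eqref{eq: extremal char} reduces the task to uniformly bounding the sup
$$
\sup_{s \in H^0(X,\Sym^kE) \setminus \{0\}} \frac{\|s(x)\|_{\Sym^kh(x)}^2}{(s,s)}
$$
by $Ck$ in the variable $x$, independently of $k$. The second inequality of Proposition \ref{prop: standard estimate} gives exactly such a bound, but only for $x$ in the subset $|x|<1/2$ of a chosen coordinate polydisk $U$ around a single point $x_0$, with a constant that a priori depends on the local $K$-frame construction used in that neighborhood.

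The key step, then, is to pass from a local bound to a global one using compactness of $X$. I would first fix any $x_0 \in X$ and invoke Proposition \ref{prop: standard estimate} to obtain a coordinate polydisk $U_{x_0}$ and a constant $C_{x_0}$ such that
$$
\|s(x)\|_{\Sym^k h(x)} \leq C_{x_0}\, k^{1/2}\, (s,s)^{1/2}
$$
for every $s \in H^0(X,\Sym^k E)$, every $x$ in the smaller neighborhood $V_{x_0} := \{x \in U_{x_0} : |x|<1/2\}$, and every $k \in \mathbb{N}$. The sets $V_{x_0}$ form an open cover of $X$ as $x_0$ ranges over $X$; since $X$ is compact, finitely many $V_{x_1},\dotsc,V_{x_M}$ already cover $X$. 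Setting $C_0 := \max_{i} C_{x_i}$, we obtain
$$
\|s(x)\|_{\Sym^k h(x)}^2 \leq C_0^2\, k\, (s,s)
$$
for all $x \in X$, all $s \in H^0(X,\Sym^k E)$, and all $k$.

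Taking the supremum over nonzero $s$ and applying \eqref{eq: extremal char} yields
$$
\|B_k(x,x)\|_{op} \leq C_0^2\, k
$$
uniformly in $x \in X$, which is the desired bound with $C = C_0^2$. There is essentially no obstacle here: the content has already been distilled into Proposition \ref{prop: standard estimate}, and the only work left is the standard compactness patching argument. The one point requiring a line of care is that the constant coming out of Proposition \ref{prop: standard estimate} depends on the local real-analytic $K$-frame construction from Lemma \ref{lem: K-frame}, so the statement must be invoked pointwise and then the finitely many resulting constants combined; but this is routine.
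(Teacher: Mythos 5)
Your argument is correct and follows the paper's intended proof exactly: the extremal characterization \eqref{eq: extremal char} reduces the claim to the second inequality of Proposition \ref{prop: standard estimate}, and compactness of $X$ closes the gap. One small observation: the second estimate in Proposition \ref{prop: standard estimate} is already stated globally for all $x \in X$ with a single constant $C'$ (the local-to-global compactness patching is carried out inside its proof), so the finite-subcover step you re-derive is redundant, though harmless — you could instead cite the proposition directly and combine it with \eqref{eq: extremal char} in one line.
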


We recall the following facts from \cite{D}. 

\begin{thm} \textnormal{(\cite{D}, Chapter 8, Theorem 4.5)} \label{thm: Hormander} 
Let $(X,\omega)$ be a compact K\"ahler manifold of dimension $n$ and let $E'$ be a holomorphic vector bundle over $X$ with a Hermitian metric $h'$. Suppose that the Hermitian operator $A_{E',\omega} = [\sqrt{-1} F_{h'}, \Lambda]$ is positive definite in bidegree $(p,q)$ with $q > 0$. Then, for any $f \in C^{\infty}_{p,q}(E)$ with $\overline{\partial}f = 0$, there exists $g \in C^\infty_{p,q-1}(E)$ such that $\overline{\partial}g = f$ and
$$
(g,g) \leq \int_X \langle {A_{E',\omega}}^{-1}f,f \rangle_{h'} \frac{\omega^n}{n!}.
$$
\end{thm}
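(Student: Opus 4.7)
The plan is to reduce the existence of $g$ solving $\overline{\partial} g = f$ with the stated $L^2$ bound to an a priori inequality, establish that inequality using the Bochner-Kodaira-Nakano identity, and then promote the $L^2$ solution to a smooth solution by elliptic regularity.

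First I would regard $\overline{\partial}$ as a closed, densely defined operator between Hilbert spaces of $L^2$ sections of the relevant bundles and consider its Hilbert-space adjoint $\overline{\partial}^*$. By a standard duality argument (the closed range theorem or a direct Hahn-Banach argument), producing $g$ with $\overline{\partial} g = f$ satisfying $(g,g) \le \int_X \langle A_{E',\omega}^{-1}f,f\rangle_{h'}\,\omega^n/n!$ is equivalent to the a priori bound
$$
|\langle f, v\rangle|^2 \le \Bigl(\int_X \langle A_{E',\omega}^{-1}f, f\rangle_{h'}\,\tfrac{\omega^n}{n!}\Bigr)\|\overline{\partial}^* v\|^2
\qquad \text{for every } v \in \mathrm{Dom}(\overline{\partial}^*).
$$
The key tool is the Bochner-Kodaira-Nakano identity
$$
\overline{\partial}\,\overline{\partial}^* + \overline{\partial}^* \overline{\partial} \;=\; \partial\,\partial^* + \partial^*\partial + [\sqrt{-1}\,F_{h'}, \Lambda],
$$
which, after pairing with $v$ and integrating, yields
$$
\|\overline{\partial} v\|^2 + \|\overline{\partial}^* v\|^2 \;\geq\; \int_X \langle A_{E',\omega}\,v, v\rangle_{h'}\,\tfrac{\omega^n}{n!}
$$
for smooth compactly supported $v$ of bidegree $(p,q)$, and by density for all $v \in \mathrm{Dom}(\overline{\partial}) \cap \mathrm{Dom}(\overline{\partial}^*)$; the positivity of $A_{E',\omega}$ makes the right-hand side meaningful.

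Given such $v$, I would use the orthogonal decomposition $v = v_1 + v_2$ with $v_1 \in \ker \overline{\partial}$ and $v_2 \in (\ker\overline{\partial})^\perp$. The hypothesis $\overline{\partial} f = 0$ gives $\langle f, v\rangle = \langle f, v_1\rangle$, while the identity $\overline{\partial}^* \overline{\partial}^* = 0$ together with $(\ker \overline{\partial})^\perp = \overline{\mathrm{Im}\,\overline{\partial}^*}$ yields $\overline{\partial}^* v = \overline{\partial}^* v_1$, so $\|\overline{\partial}^* v_1\| \leq \|\overline{\partial}^* v\|$. Applying Cauchy-Schwarz in the $A$-weighted inner product together with the Bochner estimate for the closed form $v_1$ gives
$$
|\langle f, v_1\rangle|^2 \leq \Bigl(\int \langle A^{-1}f, f\rangle\Bigr)\Bigl(\int \langle A\,v_1, v_1\rangle\Bigr) \leq \Bigl(\int \langle A^{-1}f, f\rangle\Bigr)\|\overline{\partial}^* v\|^2,
$$
which is the desired bound.

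The main obstacle I expect is analytic rather than algebraic: one must carefully manage the domains of the unbounded adjoint $\overline{\partial}^*$, justifying the density argument that extends Bochner-Kodaira-Nakano from smooth compactly supported forms to all of $\mathrm{Dom}(\overline{\partial}) \cap \mathrm{Dom}(\overline{\partial}^*)$, and verifying that the orthogonal projection onto $\ker \overline{\partial}$ is compatible with $\overline{\partial}^*$. On a compact K\"ahler manifold these issues are handled by elliptic theory for the Hodge Laplacian, which provides the Hodge decomposition $L^2_{p,q} = \mathcal{H}^{p,q} \oplus \mathrm{Im}\,\overline{\partial} \oplus \mathrm{Im}\,\overline{\partial}^*$ in a strong enough sense. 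Finally, once an $L^2$ solution $g$ has been produced, its smoothness when $f$ is smooth follows from the standard elliptic regularity theory for $\overline{\partial}$, applied to the minimal-norm solution.
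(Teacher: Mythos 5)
Your proof is correct and follows the standard Andreotti--Vesentini/Demailly argument: reduce to an a priori bound via duality, invoke the Bochner--Kodaira--Nakano identity on the compact K\"ahler manifold, split $v$ into its components along $\ker\overline{\partial}$ and $\overline{\mathrm{Im}\,\overline{\partial}^*}$, and apply Cauchy--Schwarz in the $A_{E',\omega}$-weighted inner product. Note that the paper does not prove this theorem itself -- it is stated verbatim with a citation to Demailly's book (Chapter~8, Theorem~4.5), and your proposal is essentially a faithful reconstruction of the proof given in that reference.
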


We will use the above theorem in the case when $E' = \Sym^kE \otimes T^{(1,0)}X$, $p = 1$, and $q = 1$. Note that $E' \otimes (T^*X)^{(1,0)}$ is isomorphic to $E$ as holomorphic vector bundles, because we can contract any element of $(T^*X)^{(1,0)}$ with $T^{(1,0)}X$. In fact, this is an isometry between the fibers. The curvature form of $E'$ is $\mathfrak{s}^k F_h \otimes 1 + \Id \otimes -\sqrt{-1} \Ric(\omega)$. Then, $\Sym^k h'$ is Griffiths positive for all $k$ sufficiently large.

\begin{prop} \textnormal{(\cite{D}, Chapter 7, Lemma 7.2)}
Let $E'$ be a holomorphic vector bundle over a compact Riemann surface $X$ and let $h'$ be a Hermitian metric on $E'$. Suppose that $(E',h')$ is Griffiths positive. Then, the Hermitian operator $[\sqrt{-1} F_{h'},\Lambda]$ is positive definite on $E' \otimes \bigwedge^{(1,1)}T^*X$. 
\end{prop}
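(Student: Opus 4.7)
My plan is to reduce the claim to a pointwise computation that exploits the low dimension of $X$. First I would fix a point $x_0\in X$, choose a local holomorphic coordinate $z$ near $x_0$, and write $\omega = \sqrt{-1}\,g\,dz\wedge d\bar z$ with $g>0$ and $\sqrt{-1}F_{h'} = \sqrt{-1}\,G g\, dz\wedge d\bar z$ with $G\in \End(E'_{x_0})$ Hermitian. Griffiths positivity translates, via the standard correspondence between real $(1,1)$-forms with Hermitian endomorphism values and Hermitian forms on $E'\otimes T^{(1,0)}X$, to the statement that $G$ is a positive definite Hermitian endomorphism; under the normalization chosen, $G$ is precisely the mean curvature $\sqrt{-1}\Lambda F_{h'}$.

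Next I would use the fact that $\dim_\mathbb{C} X = 1$ to simplify the commutator. For any $u \in E'\otimes \bigwedge^{1,1}T^*X$, the wedge product $\sqrt{-1}F_{h'}\wedge u$ is a $(2,2)$-form and therefore vanishes identically on a Riemann surface, so $\Lambda(\sqrt{-1}F_{h'}\wedge u) = 0$ and the commutator collapses to
$$
[\sqrt{-1}F_{h'},\Lambda]\,u \;=\; \sqrt{-1}F_{h'}\wedge \Lambda u.
$$
Writing $u = v\otimes \omega$ with $v\in E'_{x_0}$ (using that $\omega$ is a nowhere-vanishing section of the line bundle $\bigwedge^{1,1}T^*X$), we have $\Lambda u = v$, and the direct computation
$$
\sqrt{-1}F_{h'}\wedge v \;=\; \sqrt{-1}\,Gg\,v\,dz\wedge d\bar z \;=\; Gv\otimes \omega
$$
shows that under the identification $E'\otimes \bigwedge^{1,1}T^*X \cong E'$ sending $v\otimes \omega \mapsto v$, the operator $[\sqrt{-1}F_{h'},\Lambda]$ acts as multiplication by the endomorphism $G$.

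Positive definiteness of $[\sqrt{-1}F_{h'},\Lambda]$ then follows from positive definiteness of $G$ together with the fact that the natural Hermitian inner product on $E'\otimes \bigwedge^{1,1}T^*X$ is the one on $E'$ rescaled by the positive scalar $|\omega|^2_{\bigwedge^{1,1}T^*X}$, so the two positivity conditions are equivalent. The main obstacle I expect is bookkeeping: ensuring the conventions for $\Lambda$, the correspondence between a real $(1,1)$-form with Hermitian endomorphism values and a Hermitian form on $E'\otimes T^{(1,0)}X$, and the identification of Griffiths positivity with positivity of $G$ are all mutually consistent, so that the sign in $[\sqrt{-1}F_{h'},\Lambda] = \sqrt{-1}F_{h'}\wedge\Lambda$ comes out correctly.
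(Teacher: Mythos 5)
Your argument is correct, and it is the natural one. Note that the paper does not actually give a proof of this proposition---it is cited as Lemma~7.2 of Demailly's book and used as a black box---so there is no internal proof to compare against; your pointwise verification fills that gap cleanly.

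The key mechanism you isolate is exactly right: on a Riemann surface the bidegree in question, $(1,1)$, is the top bidegree $(n,n)$, so $\Lambda\bigl(\sqrt{-1}F_{h'}\wedge u\bigr)=0$ by degree reasons and
\begin{equation*}
[\sqrt{-1}F_{h'},\Lambda]\,u \;=\; \sqrt{-1}F_{h'}\wedge \Lambda u
\end{equation*}
becomes multiplication by the mean curvature endomorphism $G=\sqrt{-1}\Lambda F_{h'}$ after the identification $v\otimes\omega\mapsto v$. The one point worth flagging, though it does not affect the conclusion, is that it is this restriction to bidegree $(n,n)$ that lets Griffiths positivity suffice: in general one needs Nakano positivity to control $[\sqrt{-1}F_{h'},\Lambda]$ on $(n,q)$-forms with $0<q<n$, and Griffiths positivity is strictly weaker when $n\geq 2$. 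Since $n=1$ here, only $q=n=1$ occurs and the distinction collapses, which is exactly why the proposition holds with the weaker hypothesis. Two small bookkeeping checks that you gesture at and that do go through: $G$ is Hermitian and positive with respect to $h'$ (not as a raw matrix in a holomorphic frame), and under the identification the inner product is $\langle v,w\rangle_{h'}\cdot|\omega|^2$, so positive definiteness of $G$ transfers because $|\omega|^2>0$.
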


Putting all these facts together, we obtain the following corollary.
\begin{cor} \label{cor: Hormander}
If $f_1 \in C^\infty_{0,1}(\Sym^kE)$ with $\overline{\partial}f_1 = 0$, then there exists $f_0 \in C^\infty(\Sym^kE)$ such that
$$
(f_0,f_0) \leq O\left ( \frac{1}{k} \right ) (f_1,f_1)
$$
for all $k$ sufficiently large.
\end{cor}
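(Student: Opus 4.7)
The plan is to reduce the statement to a direct application of Theorem \ref{thm: Hormander}. Since $X$ is a Riemann surface, the canonical contraction identifies $\Sym^k E$ with $E' \otimes (T^*X)^{(1,0)}$ where $E' := \Sym^k E \otimes T^{(1,0)}X$, and this identification is an isometry fiberwise. Under the induced isomorphism $C^\infty_{0,1}(\Sym^k E) \cong C^\infty_{1,1}(E')$, the Dolbeault operators correspond, so we may view $f_1$ as a $\overline\partial$-closed $(1,1)$-form with values in $E'$. The $L^2$ inner product on $\Sym^k E$-valued $(0,1)$-forms matches the one on $E'$-valued $(1,1)$-forms (up to a harmless constant depending only on $\omega$), so norms transfer.

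Next I would verify the Griffiths positivity hypothesis uniformly in $k$. The curvature of $E'$ is
\begin{equation*}
F_{h'} = \mathfrak{s}^k F_h \otimes \Id_{T^{(1,0)}X} \,+\, \Id_{\Sym^k E} \otimes (-\sqrt{-1}\Ric(\omega)),
\end{equation*}
so $\sqrt{-1}\Lambda F_{h'} = \mathfrak{s}^k(\sqrt{-1}\Lambda F_h) \otimes \Id + \Id \otimes \Lambda(-\sqrt{-1}\Ric(\omega))$. Griffiths positivity of $(E,h)$ means $\sqrt{-1}\Lambda F_h$ is pointwise a positive-definite Hermitian endomorphism of $E$, and by Proposition \ref{prop: k} the eigenvalues of $\mathfrak{s}^k(\sqrt{-1}\Lambda F_h)$ are sums $\sum n_i \lambda_i$ with $\sum n_i = k$, hence bounded below by $k$ times the smallest eigenvalue of $\sqrt{-1}\Lambda F_h$. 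Since $X$ is compact, $\Lambda \Ric(\omega)$ is a bounded smooth function, so for all $k$ sufficiently large $\sqrt{-1}\Lambda F_{h'}$ is positive definite uniformly, with smallest eigenvalue bounded below by $ck$ for a constant $c > 0$ independent of $k$. In particular $(E',h')$ is Griffiths positive for such $k$, and by the proposition quoted from \cite{D}, the operator $A_{E',\omega} = [\sqrt{-1} F_{h'},\Lambda]$ is positive definite on $\bigwedge^{1,1} T^*X \otimes E'$.

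On a Riemann surface the operator $A_{E',\omega}$ acts on an $E'$-valued $(1,1)$-form $s \otimes \omega$ simply by $s \mapsto (\sqrt{-1}\Lambda F_{h'})(s)$, so its smallest eigenvalue equals that of $\sqrt{-1}\Lambda F_{h'}$; consequently $\|A_{E',\omega}^{-1}\|_{op} \leq (ck)^{-1}$. Applying Theorem \ref{thm: Hormander} to $f_1 \in C^\infty_{1,1}(E')$ produces a $g \in C^\infty_{1,0}(E')$ with $\overline\partial g = f_1$ and
\begin{equation*}
(g,g) \;\leq\; \int_X \langle A_{E',\omega}^{-1} f_1, f_1 \rangle_{h'} \,\omega \;\leq\; \frac{1}{ck}\,(f_1,f_1).
\end{equation*}
Transporting $g$ back through the isomorphism $C^\infty_{1,0}(E') \cong C^\infty(\Sym^k E)$ produces the desired $f_0$ with $(f_0,f_0) = O(k^{-1})(f_1,f_1)$.

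The only real obstacle is the bookkeeping of the isomorphism between $\Sym^k E$-valued $(0,1)$-forms and $E'$-valued $(1,1)$-forms, ensuring the $\overline\partial$ operators and the $L^2$ norms transfer correctly; the curvature positivity and the scaling in $k$ are then immediate from Proposition \ref{prop: k}.
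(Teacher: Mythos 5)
Your proof follows exactly the route the paper takes: identify $\Sym^k E$-valued $(0,1)$-forms with $(1,1)$-forms valued in $E' = \Sym^k E \otimes T^{(1,0)}X$, verify Griffiths positivity of $E'$ for large $k$, and apply Theorem~\ref{thm: Hormander}; the paper leaves these steps implicit under ``Putting all these facts together,'' while you spell out the eigenvalue scaling via Proposition~\ref{prop: k}. One small note: you correctly supply the condition $\overline\partial f_0 = f_1$, which is omitted in the paper's statement of the corollary (without it the statement is trivial with $f_0 = 0$) but is clearly what is used later in the proof of Theorem~\ref{thm: C_0}.
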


The following theorem shows that there exists an asymptotic expansion of the Bergman function with respect to the uniform norm in an open neighborhood.

\begin{thm} \label{thm: C_0}
Let $U$ be a sufficiently small coordinate unit disk in $X$. Then,
$$
\left \| B_k(x,y) - e^{-\mathfrak{s}^k(\phi(x))} \overline{K_k^{(N)}(x,y)} \right \|_{op(\Sym^k E_x,\Sym^kE_y)} = O\left ( k^{-N} \right )
$$
for all $x,y \in U$.
\end{thm}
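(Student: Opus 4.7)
The plan is to follow the Berman--Berndtsson--Sjöstrand scheme: I would convert the local reproducing kernel $K_k^{(N)}(\cdot, x_0)$ into an approximately holomorphic global section using the cut-off $\chi$, solve a $\bar\partial$-problem with Hörmander's $L^2$ estimate (Corollary \ref{cor: Hormander}) to obtain a genuinely holomorphic global section $s_k$, and then compare $s_k$ with the Bergman kernel via their respective reproducing properties.

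Fix $x_0 \in U$ with $|x_0|<1/4$ and regard $K_k^{(N)}(\cdot, x_0)$ as a section of $\Sym^k E$ taking values in the fixed vector space $\Sym^k \overline E_{x_0}$. First I would consider the global smooth section $\tilde s_k := \chi\,K_k^{(N)}(\cdot, x_0)$, which agrees with $K_k^{(N)}(\cdot, x_0)$ wherever $\chi\equiv 1$ and satisfies $\bar\partial \tilde s_k = (\bar\partial\chi)\,K_k^{(N)}(\cdot, x_0)$, supported in $\{|y|\geq 1/2\}$. Since $K_k^{(N)}(y, x_0) = \frac{1}{2\pi}\overline{e^{\mathfrak{s}^k \psi(x_0,\bar y)} b_k^{(N)}(x_0,\bar y)}$ with $\|b_k^{(N)}\|_{op}=O(1)$ by Theorem \ref{thm: bounds}, Proposition \ref{prop: main estimate 2} yields exponential decay: both $\|\bar\partial\tilde s_k\|_{L^2}$ and $\|\bar\partial\tilde s_k\|_{C^0}$ are $O(e^{-\delta k})$ for some $\delta>0$ independent of $k$.

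Next I would apply Corollary \ref{cor: Hormander} componentwise in $\Sym^k\overline E_{x_0}$ to produce $\alpha_k$ with $\bar\partial\alpha_k = \bar\partial\tilde s_k$ and $\|\alpha_k\|_{L^2} = O(k^{-1/2}e^{-\delta k})$. Setting $s_k := \tilde s_k - \alpha_k$, a global holomorphic section, I have for any $s\in H^0(X,\Sym^kE)$
$$(s, s_k) \;=\; (\chi s, K_k^{(N)}(\cdot, x_0))_{U,k} \,-\, (s,\alpha_k).$$
By Corollary \ref{cor: local reprod mod} the first term equals $s(x_0)$ up to an error of size $O(k^{-N})(s,s)^{1/2}$ in the $\Sym^k h(x_0)$-norm, while Cauchy--Schwarz bounds the second term by $O(e^{-\delta k})(s,s)^{1/2}$. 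Combined with the exact identity $s(x_0) = (s, K_k(\cdot, x_0))$ and the test choice $s = s_k - K_k(\cdot, x_0)$ (which is holomorphic, so lies in $H^0(X,\Sym^kE)$), this yields $\|s_k - K_k(\cdot, x_0)\|_{L^2} = O(k^{-N})$.

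It remains to upgrade this $L^2$ estimate to a pointwise operator-norm bound. Proposition \ref{prop: standard estimate} applied to the holomorphic section $s_k - K_k(\cdot, x_0)$ gives $\|s_k(y) - K_k(y, x_0)\|_{\Sym^k h(y)} = O(k^{1/2-N})$, while the smooth-section part of the same proposition, combined with the $C^0$ and $L^2$ bounds on $\alpha_k$, yields $\|\alpha_k(y)\| = O(e^{-\delta' k})$ uniformly. On the set where $\chi\equiv 1$ I have $s_k = K_k^{(N)}(\cdot, x_0) - \alpha_k$, so
$$\|K_k(y, x_0) - K_k^{(N)}(y, x_0)\|_{\Sym^k h(y)} \;=\; O(k^{1/2-N}).$$
Since $N$ is arbitrary, running the argument with $N+1$ in place of $N$ absorbs the extra $k^{1/2}$, and translating this fiberwise Hermitian estimate into the desired operator-norm estimate for $B_k - e^{-\mathfrak{s}^k\phi(x)}\overline{K_k^{(N)}(x,y)}$ is routine. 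The main technical hurdle is verifying the hypothesis of Corollary \ref{cor: Hormander}, namely Griffiths-positivity of the twisted bundle $\Sym^k E \otimes T^{(1,0)}X$; this holds for all $k$ sufficiently large because the curvature of $\Sym^k h$ grows linearly in $k$ and eventually dominates the fixed Ricci contribution of $T^{(1,0)}X$, as noted in the paragraph preceding Corollary \ref{cor: Hormander}.
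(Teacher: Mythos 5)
Your proposal is correct and follows the same BBS-style scheme as the paper: it relies on exactly the same three ingredients---Corollary \ref{cor: local reprod mod} for the near-reproduction, Corollary \ref{cor: Hormander} for the $\bar\partial$-correction, and Proposition \ref{prop: standard estimate} for the $L^2$-to-pointwise upgrade---together with the bootstrap (running the argument with $N+1$) to absorb the loss of $k^{1/2}$. The paper packages the comparison as a pair of kernel errors $\epsilon_1$ and $\epsilon_2$ and uses the a priori bound $\|B_k\|_{op}=O(k)$ (Theorem \ref{thm: global bound}) where you instead invoke the sub-mean-value inequality; these are equivalent rearrangements of the same decomposition, and the test-section polarization you use to pass from $(s, s_k - K_k(\cdot,x_0)) = O(k^{-N})\|s\|$ to an $L^2$ bound is a clean, standard variant of the paper's $\epsilon_1$ estimate (up to unimportant polynomial-in-$k$ dimensional factors coming from summing over columns, which both proofs absorb into the bootstrap).
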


\begin{proof}
Let $U$ be a sufficiently small coordinate unit disk such that the hypothesis of Corollary \ref{cor: local reprod mod} is satisfied. First, we estimate the difference
$$
\epsilon_1(y,x) = e^{-\mathfrak{s}^k(\phi(y))/2} \left (  \overline{K_k(y,x)} - \overline{\left ( \chi(\cdot) K_k(\cdot,x), K_k^{(N)}(\cdot,y) \right )} \right ) e^{-\mathfrak{s}^k(\phi(x))/2}.
$$
By definition, $\left ( \chi(\cdot) K_k(\cdot,x), K_k^{(N)}(\cdot,y) \right )$ is the element of $\left ( \Sym^kE \right )_y \otimes \left ( \Sym^k \overline{E} \right )_x$ obtained by pairing the $C^\infty(\Sym^k E)$ components of $\chi(\cdot) K_k(\cdot,x)$ and $K_k^{(N)}(\cdot,y)$ using $\Sym^k h$. With respect to a chosen frame, we can write 
$$
\left ( \chi(\cdot) K_k(\cdot,x), K_k^{(N)}(\cdot,y) \right ) =  \int_U \left (  K_k^{(N)}(\cdot ,y) \right)^* e^{-\mathfrak{s}^k(\phi(\cdot))^t}  \chi(\cdot) K_k(\cdot,x) \, \omega .
$$ 
By \eqref{eq : AA^*}, 
$$
e^{-\mathfrak{s}^k(\phi(y))^t/2} K_k(y,x) e^{-\mathfrak{s}^k(\phi(x))^t/2} = A(y) A(x)^*.
$$
Then, Corollary \ref{cor: local reprod mod} and the definition of $A(\cdot)$ imply that 
$$
e^{-\mathfrak{s}^k(\phi(y))^t/2}  \left ( \chi(\cdot) K_k(\cdot, x), K_k^{(N)}(\cdot, y) \right )e^{-\mathfrak{s}^k(\phi(x))^t/2} = (A(y)+V(y))A(x)^*,
$$
where the columns $v_i(y)$ of $V(y)$ satisfy $\|v_i(y)\|_{\mathbb{C}^{r_k}} = O(k^{-N})$. Thus,
\begin{align*}
 \hspace{10pt} \left \| \epsilon_1(y,x) \right \|_{op}    & = \left \|\overline{V(y)}A(x)^t \right \|_{op} \\
    & \leq \left \| \overline{V(y)} \right \|_{op} \|A(x)^t\|_{op} \\
    & = \|V(y)\|_{op} \|A(x)^t\|_{op} \\
    & = (\|B_k (x,x)\|_{op})^{1/2} O\left ( \frac{1}{k^{N}} \right ) \\
    & = O\left ( \frac{1}{k^{N-\frac{1}{2}}} \right ).
\end{align*}

Next, we estimate the difference
$$
\epsilon_2(y,x) = e^{-\mathfrak{s}^k(\phi(y))/2} \left ( \overline{\chi(y) K_k^{(N)}(y,x)} - \overline{\left ( \chi(\cdot) K_k^{(N)}(\cdot,x), K_k(\cdot,y) \right )} \right ) e^{-\mathfrak{s}^k(\phi(x))/2}
$$
Note that the columns of
$$
\chi(y) K_k^{(N)}(y,x) - \left ( \chi(\cdot) K_k^{(N)}(\cdot,x), K_k(\cdot,y) \right )
$$
are the Bergman projections of the columns of $\chi(\cdot) K_k^{(N)}(\cdot,x)$. In other words, the columns are $L^2$ minimal solutions of the equation
$$
\overline{\partial} V(\cdot) = \overline{\partial} \left ( \chi(\cdot) K_k^{(N)}(\cdot,x ) \right ).
$$
Since $K_k^{(N)}$ is holomorphic in the first variable,
$\overline{\partial} \left ( \chi(\cdot) K_k^{(N)}(\cdot,x) \right )=\overline{\partial} \chi(\cdot) K_k^{(N)}(\cdot,x)$. Let $v(\cdot)$ be a column of the matrix 
$$
e^{-\mathfrak{s}^k(\phi(\cdot))^t/2} \, \overline{\partial} \left ( \chi(\cdot) K_k^{(N)}(\cdot,x) \right )e^{-\mathfrak{s}^k(\phi(x))^t/2}.
$$
We see that
\begin{equation*}
\begin{split}
&  \|v(\cdot)\|_{op} \\
& \leq \overline{\partial} \chi(\cdot) \left \| e^{-\mathfrak{s}^k(\phi(\cdot))/2} e^{\mathfrak{s}^k\psi(x,\overline{\cdot})}b_k^{(N)}(x,\overline{\cdot})e^{-\mathfrak{s}^k(\phi(x))/2} 
 \right \|_{op} \\
    & \leq \overline{\partial} \chi(\cdot) \| e^{-\mathfrak{s}^k(\phi(\cdot))/2} e^{\mathfrak{s}^k\psi(x,\overline{\cdot})}e^{-\mathfrak{s}^k(\phi(x))/2} \|_{op} \| e^{\mathfrak{s}^k(\phi(x))/2} b_k^{(N)}(x,\overline{\cdot})e^{-\mathfrak{s}^k(\phi(x))/2} \|_{op} \\
    & = O(e^{-\delta k}) \|e^{\mathfrak{s}^k(\phi(x))/2} b^{(N)}_k(x,\overline{\cdot}) e^{-\mathfrak{s}^k(\phi(x))/2}\|_{op} \\
    & \text{ by Proposition \ref{prop: main estimate 2}} \\
    & = O(ke^{-\delta k}) \\
    & \text{ by Theorem \ref{thm: bounds}}.
\end{split}
\end{equation*}
As a result, the $L^2$-norm and the $C^0$-norm of $v(\cdot)$ are $O(e^{-\delta' k})$ for some $\delta' < \delta$. Then, Corollary \ref{cor: Hormander} implies that the columns of $\epsilon_2(\cdot,x)$ have $L^2$-norms that are $O(e^{-\delta' k})$. By Proposition \ref{prop: standard estimate}, there is a uniform bound on the columns of $\epsilon_2(\cdot,x)$ of order $O(e^{-\delta'' k})$ for some $\delta'' < \delta'$. It follows that 
$$
\|\epsilon_2(y,x)\|_{op}= \|\epsilon_2(y,x)^t\|_{op} = O(r_ke^{-\delta'' k}) = O(e^{-\delta''' k}),
$$
for some constant $\delta''' > 0$ smaller than $\delta''$.

Therefore, for all $|x| <1/4$ and $|y| < 1/4$,
\begin{align*}
& \left \| B_k(x,y) - e^{-\mathfrak{s}^k(\phi(x))} \overline{K_k^{(N)}(x,y)} \right \|_{op(\Sym^k E_x,\Sym^k E_y)} \\
    = \hspace{2pt} & \left \|e^{-\mathfrak{s}^k(\phi(x))/2} \left (  \overline{K_k(x,y)} - \overline{K_k^{(N)}(x,y)} \right) e^{-\mathfrak{s}^k(\phi(y))/2} \right \|_{op} \\
    = \hspace{2pt} & \left \|  \epsilon_1(y,x)^* - \epsilon_2(x,y)  \right \|_{op}  \\
    = \hspace{2pt} & O\left ( \frac{1}{k^{N- \frac{1}{2}}} \right )
\end{align*}
Set $N' = N+1$. Then,
\begin{equation*}
    \left \| B_k(x,y) - e^{-\mathfrak{s}^k(\phi(x))} \overline{K_k^{(N')}(x,y)}  \right \|_{op(\Sym^k E_x,\Sym^k E_y)} \\
    =  O \left (k^{-N-1/2} \right).
\end{equation*}
It follows that,
\begin{equation*}
\begin{split}
    & \left \| B_k(x,y) - e^{-\mathfrak{s}^k(\phi(x))} \overline{K_k^{(N)}(x,y)}  \right \|_{op(\Sym^k E_x,\Sym^k E_y)} \\
    & \leq \left \| B_k(x,y) - e^{-\mathfrak{s}^k(\phi(x))} \overline{K_k^{(N')}(x,y)} \right \|_{op(\Sym^k E_x,\Sym^k E_y)} \\
    & \hspace{5pt} + \frac{k^{-N}}{2\pi} \left \| e^{-\mathfrak{s}^k(\phi(x))}  e^{\mathfrak{s}^k\psi(y,\overline{x}))}b_{k,N+1}(y,\overline{x})\right \|_{op(\Sym^k E_x,\Sym^k E_y)} \\
    & = O\left( k^{-N} \right ).
\end{split}
\end{equation*}

\end{proof}

Because $K_k^{(N)}(x,y)$ and $K_k(x,y)$ are holomorphic in $x$ and anti-holomorphic in $y$, we can use Theorem \ref{thm: C_0} to obtain a $C^p$ expansion of the Bergman function. We will need the following lemma (c.f. Lemma 4.9 of \cite{RT}).

\begin{lem} \label{lem: Cauchy}
Let $U \subseteq \mathbb{C}$ be an open neighborhood of the origin. Let $\{f_k(x,y)\}_{k=1}^\infty$ be a sequence of $r_k \times r_k$ matrix valued functions on $U \times U$ such that $f_k$ is holomorphic for each $k$. Suppose that 
$$
\left \| e^{-\mathfrak{s}^k(\phi(x))/2} f_k(y,\overline{x}) e^{-\mathfrak{s}^k(\phi(y))/2} \right \|_{op} = O(k^q)
$$
uniformly on $U \times U$ for some integer $q$. Then, for any positive integer $p$ and for any differential operator $L$ in $x$ and $y$ of order $p$,
$$
\left \| e^{-\mathfrak{s}^k(\phi(x))/2} L (f_k)(y,\overline{x}) e^{-\mathfrak{s}^k(\phi(y))/2} \right \|_{op} = O(k^{q+p}).
$$
uniformly on $B_\rho(0) \times B_\rho(0)$ where $B_\rho(0)$ is a disk of radius $\rho$ centered at the origin and $\rho$ is sufficiently small. 
\end{lem}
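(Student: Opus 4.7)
The plan is to express $L(f_k)(y, \overline{x})$ via Cauchy's integral formula applied to $f_k$ with a $k$-dependent contour radius $\rho_0 = 1/k$, and then to control the resulting twisted operator norms by exploiting the Hermitian structure together with the multiplicativity of $\Sym^k$.

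Since $f_k(y, \overline{x})$ is holomorphic in $y$ and antiholomorphic in $x$, expanding any differential operator $L$ of order $p$ into monomials $\partial_x^a \partial_{\overline{x}}^b \partial_y^c \partial_{\overline{y}}^d$ and discarding the ones that annihilate $f_k(y, \overline{x})$ reduces the problem to $L = \partial_y^c \partial_{\overline{x}}^b$ with $b + c \leq p$. The two-variable Cauchy formula applied to $f_k$, which is holomorphic on $U \times U$, then gives
\begin{equation*}
\partial_1^c \partial_2^b f_k(y, \overline{x}) = \frac{c!\,b!}{(2\pi \sqrt{-1})^2} \oint_{|\zeta - y| = \rho_0} \oint_{|\xi - \overline{x}| = \rho_0} \frac{f_k(\zeta, \xi)}{(\zeta - y)^{c+1}(\xi - \overline{x})^{b+1}}\, d\zeta \, d\xi.
\end{equation*}
The factors $e^{-\mathfrak{s}^k(\phi(x))/2}$ and $e^{-\mathfrak{s}^k(\phi(y))/2}$ are constant in $(\zeta, \xi)$ and hence pass through the integral. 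Standard operator-norm estimates (in the spirit of Lemma \ref{lem: Operator}) bound the left-hand side of the conclusion by $c!\,b!\,\rho_0^{-(b+c)}$ times the supremum over the contour of $\|e^{-\mathfrak{s}^k(\phi(x))/2} f_k(\zeta, \xi) e^{-\mathfrak{s}^k(\phi(y))/2}\|_{op}$; with $\rho_0 = 1/k$ the Cauchy factor is at most $k^p$.

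To bound the integrand in terms of the hypothesis, I factor
\begin{equation*}
e^{-\mathfrak{s}^k(\phi(x))/2} f_k(\zeta, \xi) e^{-\mathfrak{s}^k(\phi(y))/2} = U_1\, V\, U_2,
\end{equation*}
where $V = e^{-\mathfrak{s}^k(\phi(\overline{\xi}))/2} f_k(\zeta, \xi) e^{-\mathfrak{s}^k(\phi(\zeta))/2}$ is $O(k^q)$ by the hypothesis (applied at $(\overline{\xi}, \zeta) \in U \times U$), while $U_1 = e^{-\mathfrak{s}^k(\phi(x))/2} e^{\mathfrak{s}^k(\phi(\overline{\xi}))/2}$ and $U_2 = e^{\mathfrak{s}^k(\phi(\zeta))/2} e^{-\mathfrak{s}^k(\phi(y))/2}$. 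The main obstacle is that naively $\|U_1\|_{op}$ and $\|U_2\|_{op}$ could grow like $e^{Ck}$, since each factor $\|e^{\pm \mathfrak{s}^k(\phi)/2}\|_{op}$ is itself exponentially large in $k$.

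The resolution uses the Hermiticity of $\mathfrak{s}^k(\phi)$ (Proposition \ref{prop: k}) together with the identity $\Sym^k(AB) = (\Sym^k A)(\Sym^k B)$ to compute
\begin{equation*}
\|U_1\|_{op}^2 = \|U_1^* U_1\|_{op} = \lambda_{\max}\bigl( \Sym^k\bigl(h(\overline{\xi})^{-1/2} h(x) h(\overline{\xi})^{-1/2}\bigr)\bigr).
\end{equation*}
On the contour, $|\overline{\xi} - x| = \rho_0 = 1/k$, so the smoothness of $h$ gives $\|M_1 - \Id\|_{op} = O(1/k)$ for the positive Hermitian matrix $M_1 := h(\overline{\xi})^{-1/2} h(x) h(\overline{\xi})^{-1/2}$. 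Its eigenvalues are all $1 + O(1/k)$, and the eigenvalues of $\Sym^k M_1$ are $\prod_i \mu_i(M_1)^{n_i}$ with $\sum n_i = k$ (which follows from Proposition \ref{prop: k} via $\Sym^k(e^X) = e^{\mathfrak{s}^k X}$). Hence $\lambda_{\max}(\Sym^k M_1) \leq \mu_{\max}(M_1)^k \leq (1 + O(1/k))^k = O(1)$, so $\|U_1\|_{op} = O(1)$. The identical argument applied to $M_2 := h(y)^{1/2} h(\zeta)^{-1} h(y)^{1/2}$ gives $\|U_2\|_{op} = O(1)$. Combining everything yields a bound of $O(k^p) \cdot O(1) \cdot O(k^q) \cdot O(1) = O(k^{p+q})$, provided $\rho$ is small and $k$ is large enough for the contours of radius $1/k$ around $y$ and $\overline{x}$ to lie in $U$.
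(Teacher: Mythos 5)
Your proof is correct and follows the same overall strategy as the paper's: apply Cauchy's integral formula over a contour whose radius is $k^{-1}$, so that the Cauchy denominators cost only $O(k^{p})$; insert the identity $e^{\mathfrak{s}^k(\phi(\cdot))/2}e^{-\mathfrak{s}^k(\phi(\cdot))/2}$ so the hypothesis can be applied at the contour point; and then show the remaining ``transition'' factors have operator norm $O(1)$ precisely because the contour shrinks at rate $k^{-1}$. The one genuine point of departure is how that last step is carried out. The paper writes the transition factor $e^{-\mathfrak{s}^k(\phi(x))/2}e^{\mathfrak{s}^k(\phi(\zeta))/2}$ as $e^{\mathfrak{s}^k\left(Z^{(2)}(-\phi(x)/2,\,\phi(\zeta)/2)\right)}$, notes $\left\|Z^{(2)}(-\phi(x)/2,\phi(\zeta)/2)\right\|_{op}=O(|\zeta-x|)=O(k^{-1})$, and then invokes part (3) of Proposition \ref{prop: k} together with $\|e^{A}\|_{op}\le e^{\|A\|_{op}}$ to conclude the transition factor is $O(1)$. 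You instead compute $U_1^*U_1=\Sym^k\bigl(h(\overline{\xi})^{-1/2}h(x)h(\overline{\xi})^{-1/2}\bigr)$ and use part (4) of Proposition \ref{prop: k}, the explicit eigenvalue description of $\mathfrak{s}^k$, to bound the largest eigenvalue by $\bigl(1+O(k^{-1})\bigr)^{k}=O(1)$. Both arguments lean on Proposition \ref{prop: k} and the multiplicativity of $\Sym^k$; your route avoids the auxiliary Baker--Campbell--Hausdorff type function $Z^{(2)}$ and is somewhat more elementary, at the cost of using that the relevant products are positive Hermitian (which they are here), whereas the paper's $\|e^{A}\|_{op}\le e^{\|A\|_{op}}$ estimate would apply even without that structure. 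Your use of a two-variable Cauchy formula rather than the paper's one-variable case followed by ``other derivatives are treated similarly'' is purely cosmetic.
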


\begin{proof}
Let $(x,y) \in U \times U$ and let $B_{k^{-1}}(x)$ denote the disk of radius $k^{-1}$ around $x$. For all $k$ large enough so that $B_{k^{-1}}(x) \subseteq U$, Cauchy's integral formula implies that
\begin{equation*}
\begin{split}
& e^{-\mathfrak{s}^k(\phi(x))/2} \deriv{f_k}{\overline{x}} (y,\overline{x}) e^{-\mathfrak{s}^k(\phi(y))/2} \\
    = & \int_{\partial B_{k^{-1}}(x)} e^{-\mathfrak{s}^k(\phi(x))/2}e^{\mathfrak{s}^k(\phi(\zeta))/2} \frac{e^{-\mathfrak{s}^k(\phi(\zeta))/2} f_k (y,\zeta) e^{-\mathfrak{s}^k(\phi(y))/2}}{2\pi \sqrt{-1} (\zeta-\overline{x})^2} \,d\zeta.
\end{split}
\end{equation*}
Choose $\rho > 0$ small enough so that $B_{2\rho}(0)$ is compactly contained in $U$ and the matrix-valued function $Z : B_{2\rho(0)} \times B_{2\rho}(0)$ defined by
\begin{equation*}
Z(x,\zeta) = Z^{(2)} \left (-\phi(x)/2, \phi(\zeta)/2\right )    
\end{equation*}
is real analytic.  So, for all $x \in B_{\rho}(0)$, $k$ sufficiently large, and $\zeta \in \partial B_{k^{-1}}(x)$, 
\begin{equation*}
Z(x,\zeta) = O(|\zeta-x|) = O(k^{-1})    
\end{equation*}
uniformly in $x \in B_\rho(0)$. Then, Proposition \ref{prop: k} implies that $\| e^{\mathfrak{s}^k(Z(x,\zeta))}\|_{op} = O(1)$ . Thus,   
\begin{align*}
& \left \| \int_{\partial B_{k^{-1}}(x)} e^{\mathfrak{s}^k(Z(x,\zeta))} \frac{e^{-\mathfrak{s}^k(\phi(\zeta))/2} f_k (y,\zeta) e^{-\mathfrak{s}^k(\phi(y))/2}}{2\pi \sqrt{-1} (\zeta-\overline{x})^2} \,d\zeta \right \|_{op} \\
    \leq \hspace{2pt} & \frac{1}{2\pi k^{1/2}} \left ( \int_{\partial B_{k^{-1}}(x)} \left \| e^{\mathfrak{s}^k(Z(x,\zeta))} \frac{e^{-\mathfrak{s}^k(\phi(\zeta))/2} f_k (y,\zeta) e^{-\mathfrak{s}^k(\phi(y))/2}}{2\pi \sqrt{-1} (\zeta-\overline{x})^2} \right \|_{op}^2 \,d\zeta \right )^{1/2} \\
    & \text{ by Lemma \ref{lem: Operator}} \\
    = \hspace{2pt} &  O(k^{q+1}).
\end{align*}
Other derivatives are treated similarly.

\end{proof}

Let $E'$ be a holomorphic vector bundle over $X$ and let $h'$ be a Hermitian metric on $E'$. For any positive integer $p$, we define the $C^p$ norm on the set of smooth sections of $\End(E)$ as follows. For any nonnegative integer $q$, let $\nabla_q$ denote the connection on $(T_\mathbb{C}^* X )^{\otimes q} \otimes \End(E')$ induced by the Chern connection on $E'$ and the Levi-Civita connection on $TX$. We see that
\begin{equation*}
(T_\mathbb{C}^* X )^{\otimes q} \otimes \End(E') \cong (T_\mathbb{C}^* X )^{\otimes q} \otimes E'^* \otimes E' \cong  \Hom \left ( (T_\mathbb{C} X)^{\otimes q} \otimes E', E' \right ).
\end{equation*}
We define the $C^p$ norm of $A \in C^\infty(X, \End(E))$ by
$$
\|A\|_{C^p,op} = \sup_{x \in X} \sum_{q=0}^p \left \| \nabla_{q-1} \dotsb \nabla_0 A (x) \right \|_{op \left ( (T_\mathbb{C} X)_{x}^{\otimes q} \otimes E'_x, E'_x \right )}.
$$

\begin{thm} \label{thm: Main 2}
Let $N \in \mathbb{N}$ and $p \in \mathbb{Z}_{\geq 0}$ be fixed. There exist smooth sections $b_{k,0}$, $\dotsc$, $b_{k,N}$ of $\End(\Sym^k E)$ such that $\|b_{k,i}\|_{C^p,op} = O(k^p)$ for all $i = 0, \dotsc, N$ and
$$
B_k(x,x) = b_{k,0}(x) k + \dotsb + b_{k,N}(x) k^{1-N} + O(k^{p-N})
$$
where the error term $O(k^{p-N})$ is bounded with respect to the norm $\| \cdot \|_{C^p,op}$. Moreover,
\begin{align*}
b_{k,0}(x) = \hspace{2pt} & \frac{\sqrt{-1}}{k} \Lambda F_{\Sym^k h}(x) \hspace{10pt} \text{ and } \\
    b_{k,1}(x) = \hspace{2pt} & -\frac{1}{2}(\Lambda (F_{\Sym^k h}) )^{-1}\Lambda (\Delta (F_{\Sym^k h})) (x) 
+ \frac{1}{2}\Scal_\omega(x)  \\
    & +  \frac{\sqrt{-1}}{2} \Lambda  \left ( (\Lambda(F_{\Sym^k h}))^{-1} \nabla^{1,0}(\Lambda(F_{\Sym^k h})) \wedge (\Lambda(F_{\Sym^k h}))^{-1} \nabla^{0,1}(\Lambda (F_{\Sym^k h}))   \right ) (x).
\end{align*}
\end{thm}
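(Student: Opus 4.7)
The plan is to combine the local $C^0$ expansion of Theorem~\ref{thm: C_0} with the holomorphicity of the local reproducing kernel and the Cauchy estimates of Lemma~\ref{lem: Cauchy} to upgrade to a $C^p$ expansion, then identify the coefficients with the restrictions to the diagonal of the $b_{k,m}(x,z)$ constructed recursively in Section~5.

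First I would cover $X$ by finitely many coordinate unit disks on which Theorem~\ref{thm: C_0} and Theorem~\ref{thm: bounds} apply. Given $N$ and $p$, I apply Theorem~\ref{thm: C_0} with $N$ replaced by $N+p$ to get
\begin{equation*}
\left\| B_k(x,y) - e^{-\mathfrak{s}^k(\phi(x))}\overline{K_k^{(N+p)}(x,y)}\right\|_{op(\Sym^k E_x, \Sym^k E_y)} = O(k^{-N-p}).
\end{equation*}
Since $\overline{K_k(x,y)}$ and $\overline{K_k^{(N+p)}(x,y)}$ are both anti-holomorphic in $x$ and holomorphic in $y$, once conjugated by $e^{-\mathfrak{s}^k(\phi)/2}$ on each side they take the form $f_k(y,\overline{x})$ required by Lemma~\ref{lem: Cauchy}. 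Each derivative costs at most a factor of $k$, so differentiating $p$ times upgrades the error to $O(k^{-N})$ in the $C^p$ norm. Restricting to the diagonal, substituting $\psi(x,\overline{x})=\phi(x)$, and using the definition $b_k^{(N+p)}(x,z) = k\sum_{m=0}^{N+p}k^{-m}b_{k,m}(x,z)$ from Section~5 isolates the coefficients $b_{k,m}(x)$ of the expansion, up to the constant $1/(2\pi)$ and a conjugation. The bound $\|b_{k,m}\|_{C^p,op}=O(k^p)$ then follows from Theorem~\ref{thm: bounds} together with a further application of Lemma~\ref{lem: Cauchy}.

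To verify that the $b_{k,m}$ glue to globally defined smooth sections of $\End(\Sym^k E)$, I would argue inductively: the leading coefficient satisfies $b_{k,0}(x) = k^{-1}B_k(x,x) + O(k^{-1})$ in $C^p$, hence is globally determined modulo lower order error, and similarly at each subsequent order $b_{k,m}$ is determined by the globally defined $B_k(x,x)$ modulo $k^{-m-1}$. Since Theorem~\ref{thm: C_0} yields matching local expansions on any overlap, the local pieces assemble into global smooth sections.

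Finally, the explicit formulas for $b_{k,0}$ and $b_{k,1}$ come from the Section~5 recursion
\begin{equation*}
b_{k,0}(x,z) = \frac{1}{k}\mathfrak{s}^k\!\left(\deriv{\theta}{z}(x,x,z)\right)\tilde g(x,z)^{-1},\qquad b_{k,m}(x,z) = \tilde g(x,z)^{-1}\deriv{A_{k,m-1}}{z}(x,x,z),
\end{equation*}
together with the identity $\partial_z\theta(x,x,\overline{x}) = \tilde F(x,\overline{x})$ derived in Section~5, which yields $b_{k,0}(x) = (\sqrt{-1}/k)\Lambda F_{\Sym^k h}(x)$ immediately. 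The formula for $b_{k,1}$ is the main technical obstacle: it requires computing $A_{k,0}$ from its defining relation, differentiating in $z$, restricting to the diagonal, and collecting the resulting terms into the three geometric pieces $\Lambda\Delta F_{\Sym^k h}$, $\Scal_\omega$, and the wedge correction. This is where the computation of Appendix~B enters; tracking the matrix orderings coming from the non-abelian nature of $\Sym^k h$ is the delicate point compared with the line bundle case treated in \cite{BBS}.
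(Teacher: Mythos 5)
Your proposal follows the same overall route as the paper: apply Theorem~\ref{thm: C_0} locally, bootstrap from the $C^0$ estimate to a $C^p$ estimate using the holomorphicity/anti-holomorphicity of the kernels and Lemma~\ref{lem: Cauchy}, patch with a partition of unity, and defer the formulas for $b_{k,0},b_{k,1}$ to Appendix~B. However, the step you compress to ``each derivative costs at most a factor of $k$'' is exactly where the paper's proof does its work, and your justification is incomplete. The norm $\|\cdot\|_{C^p,op}$ is built from the Chern covariant derivatives $\nabla_q$ on tensor powers of $T^*_{\mathbb C}X$ twisted by $\End(\Sym^k E)$, not from ordinary coordinate derivatives of the frame representation $e^{-\mathfrak{s}^k\phi/2}f_k\,e^{-\mathfrak{s}^k\phi/2}$, and Lemma~\ref{lem: Cauchy} only controls the latter. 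Passing from ordinary to covariant derivatives produces the connection matrix terms $\mathfrak{s}^k(\eta)\,dx$ and $\mathfrak{s}^k(\xi)\,d\overline{x}$, and one must both observe the partial cancellation between the Chern connection $1$-form and the $dx$-component of $d(e^{-\mathfrak{s}^k\phi})$, and invoke Proposition~\ref{prop: k} to see that the residual connection terms are only $O(k)$. Without Proposition~\ref{prop: k} there is no a priori reason the connection matrix of $\Sym^k h$ — a matrix of size $\binom{k+r-1}{r-1}\sim k^{r-1}$ — should grow only linearly in $k$; this linear bound is the genuinely higher-rank ingredient, and a sketch that never mentions it has a real gap.

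Two smaller points. Your gluing step asserts that Theorem~\ref{thm: C_0} gives ``matching local expansions on any overlap,'' but the locally constructed $b_{k,m}$ on two charts need only agree modulo the expansion error, not exactly; one must either patch via $b_{k,m}=\sum_\alpha\rho_\alpha b_{k,m}^{(\alpha)}$ and re-verify the expansion, or complete your inductive uniqueness sketch to conclude that a particular choice of patching works. As written your argument does not actually produce a single global smooth section. Finally, applying Theorem~\ref{thm: C_0} at level $N+p$ rather than $N$ is harmless overkill: after $p$ covariant derivatives each tail term $b_{k,m}k^{1-m}$ with $m>N$ already contributes $O(k^{p-N})$, which is the error the statement allows, so the extra terms buy nothing.
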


\begin{proof}
Let $x_0 \in X$ be given. By Theorem \ref{thm: C_0}, there exists a coordinate unit disk $U$ centered at $x_0$ and matrix valued functions $b_{k,0}, \dotsc, b_{k,N}$ defined on $U$ such that
$$
\left \|e^{-\mathfrak{s}^k(\phi(x))}\left ( \overline{K_k(x,y)} - \frac{1}{2\pi} e^{\mathfrak{s}^k(\psi(y,\overline{x}))}\left ( b_{k,0}(y,\overline{x})k + \dotsb + b_{k,N}(y,\overline{x}) k^{-N+1} \right) \right )  \right \|_{op (\Sym^k E_x, \Sym^k E_y)} = O(k^{-N})
$$
uniformly for all $x,y \in U$. Let $\rho$ be a smooth function that is compactly supported in $U$. We will show that
$$
\left \|\rho(\cdot) e^{-\mathfrak{s}^k(\phi(\cdot))} \left ( \overline{K_k(\cdot,\cdot)} - \frac{1}{2\pi}e^{\mathfrak{s}^k(\psi(\cdot,\overline{\cdot}))} \left (  b_{k,0}(\cdot,\overline{\cdot})k + \dotsb +  b_{k,N}(\cdot,\overline{\cdot}) k^{1-N} \right) \right )  \right \|_{C^p,op} = O(k^{p-N})
$$
and that
$$
\|\rho(\cdot) b_{k,i}(\cdot,\overline{\cdot})\|_{C^p,op} = O(k^p)
$$
for all $i= 0, \dotsc, N$.
Then, the theorem will follow from an argument using a partition of unity subordinate to a finite cover of $X$ consisting of open sets with the same properties as $U$.

We first consider the case when $p = 1$. Define $\eta : U \to M_{r \times r}(\mathbb{C})$ and $\xi : U \to M_{r\times r}(\mathbb{C})$ by
\begin{equation*}
    \eta(x) = -\sum_{n=1}^\infty \frac{1}{n!} \ad(-\phi(x))^{n-1}(\partial_1(\phi)(x)) 
\end{equation*}
and
\begin{equation*}
    \xi(x) = -\sum_{n=1}^\infty \frac{1}{n!} \ad(-\phi(x))^{n-1}(\overline{\partial}_1(\phi)(x)).
\end{equation*}
Note that $\eta$ is the connection matrix of $h = e^{-\phi}$ on $U$. Set 
\begin{equation*}
f_k(y,\overline{x}) =  \overline{K_k(x,y)} - \overline{K_k^{(N)}(x,y)}.
\end{equation*}
We observe that 
\begin{equation*}
\begin{split}
\nabla_0 \left ( \rho(x) e^{-\mathfrak{s}^k(\phi(x))} f_k(x,\overline{x} )\right )
    = & \hspace{3pt} \mathfrak{s}^k(\eta(x)) \rho(x) e^{-\mathfrak{s}^k(\phi(x))} f_k(x,\overline{x})  \,dx + e^{-\mathfrak{s}^k(\phi(x))} d (\rho f_k)(x)   \\
    & +  \mathfrak{s}^k(\xi(x)) \rho(x) e^{-\mathfrak{s}^k(\phi(x))} f_k(x,\overline{x})  \,d\overline{x} - \mathfrak{s}^k(\eta(x)) \rho(x) e^{-\mathfrak{s}^k(\phi(x))} f_k(x,\overline{x})  \,dx\\
    & + \rho(x)e^{-\mathfrak{s}^k(\phi(x))} f_k(x,\overline{x}) \mathfrak{s}^k(\eta(x)) \,dx \\
    = & \hspace{3pt} e^{-\mathfrak{s}^k(\phi(x))} d (\rho f_k)(x) +  \mathfrak{s}^k(\xi(x)) \rho(x) e^{-\mathfrak{s}^k(\phi(x))} f_k(x,\overline{x})  \,d\overline{x} \\
    & + \rho(x)e^{-\mathfrak{s}^k(\phi(x))} f_k(x,\overline{x}) \mathfrak{s}^k(\eta(x)) \,dx.
\end{split}
\end{equation*}
We will show that each of the terms on the right hand side are $O(k)$.

Consider the term $e^{-\mathfrak{s}^k(\phi(x))} d(\rho f_k)(x)$. Let $\{v_1,v_2\}$ be a basis for $(T_\mathbb{C}X)_x$ and let $\{\epsilon^1,\epsilon^2\}$ be the basis dual to $\{v_1,v_2\}$. Then, we can write 
\begin{equation*}
    e^{-\mathfrak{s}^k(\phi(x))} d(\rho f_k)(x) = M_1(x) + M_2(x),
\end{equation*}
where $M_i(x) \in \End(\Sym^k E)_x \otimes \text{span}(\{\epsilon^i\})$ for each $i=1,2$. Lemma \ref{lem: Cauchy} implies that, for each $i=1,2$, 
$$
\| M_i(x) \|_{op (E_x \otimes (T_\mathbb{C}X)_x , E_x)} = O(k^{1-N})
$$
uniformly for all $x \in U$. Then,
$$
\|e^{-\mathfrak{s}^k(\phi(x))} d(\rho f_k)(x)\|_{op (E_x \otimes (T_\mathbb{C}X)_x , E_x)} = O(k^{1-N}).
$$

The other terms also satisfy the same bound; for instance,
\begin{equation*}
\begin{split}
& \| \mathfrak{s}^k(\xi(x)) \rho(x) e^{-\mathfrak{s}^k(\phi(x))} f_k(x,\overline{x})  \,d\overline{x}  \|_{op (E_x \otimes (T_\mathbb{C}X)_x , E_x)} \\
    = & \left \| \mathfrak{s}^k(e^{\phi(x)/2} \xi(x) e^{-\phi(x)/2}) \rho(x) e^{-\mathfrak{s}^k(\phi(x))/2} f_k(x,\overline{x}) e^{-\mathfrak{s}^k(\phi(x))/2}  \,d\overline{x} \right \|_{op (\mathbb{C}^{r_k} \otimes (T_\mathbb{C}^*X)_x , \mathbb{C}^{r_k})} \\
    = & \hspace{2pt}  O(k^{1-N}) \\
    & \text{ by Lemma \ref{lem: Cauchy} and Proposition \ref{prop: k}}
\end{split}
\end{equation*}
uniformly for all $x \in U$. It follows that
\begin{equation*}
    \left \| \rho(x) e^{-\mathfrak{s}^k(\phi(x))} f_k(x,\overline{x} ) \right \|_{C^1,op} = O(k^{1-N}).
\end{equation*}

Next, set $g_k(y,\overline{x}) = e^{\mathfrak{s}^k(\psi(y,\overline{x}))} b_{k,i}(y,\overline{x})$. As above, 
\begin{align*}
\nabla_0 \left ( \rho(x) e^{-\mathfrak{s}^k(\phi(x))} g_k(x,\overline{x})  \right ) = & \hspace{3pt} e^{-\mathfrak{s}^k(\phi(x))} d (\rho g_k)(x) +  \mathfrak{s}^k(\xi(x)) \rho(x) e^{-\mathfrak{s}^k(\phi(x))} g_k(x,\overline{x})  \,d\overline{x} \\
    & + \rho(x)e^{-\mathfrak{s}^k(\phi(x))} g_k(x,\overline{x}) \mathfrak{s}^k(\eta(x)) \,dx.
\end{align*}
and
\begin{align*}
\left \|\rho(x) e^{-\mathfrak{s}^k(\phi(x))} g_k(x,\overline{x})  \right \|_{C^1,op} = O(k)
\end{align*}
by Theorem \ref{thm: bounds}, Lemma \ref{lem: Cauchy}, and Proposition \ref{prop: k}.

The case when $p > 1$ is handled in a similar manner. For instance, $\nabla_{p-1} \dotsb \nabla_{0} (\rho b_{k,i})$ can be written as a sum of terms that only involve $g_k(x,\overline{x})$, $\mathfrak{s}^k(\eta(x))$, $\mathfrak{s}^k(\xi(x))$, the Christoffel symbols for $(T_\mathbb{C} X)^*$, and their derivatives. The number of summands depends only on $p$ and it can be shown that the operator norm of each summand is at most $O(k^p)$.

Finally, the formulas for $B_{k,0}$ and $B_{k,1}$ follow from Proposition \ref{prop: computation} and the fact that $B_{k,0}$ and $B_{k,1}$ are constructed by using a partition of unity and the local functions $b_{k,0}$ and $b_{k,1}$.

\end{proof}

\newpage

\appendix

\section{Proof of Proposition \ref{prop: k}}

We fix $r > 0$ and let $V$ be a $r$-dimensional complex inner product space. If $\{e_1, \dotsc, e_r\}$ is a basis of $V$ and $\mathbf{n}=(n_1, \dotsc, n_r) \in (\mathbb{Z}_{\geq0})^r$, then we write 
$$
|\mathbf{n}| := \sum_{i=1}^r n_i,
$$
$$
e_\mathbf{n} := {e_1}^{n_1} \dotsb {e_r}^{n_r},
$$
and
$$
u_{\mathbf{n}} := \frac{e_\mathbf{n}}{\sqrt{\mathbf{n}!}}.
$$

\begin{prop} \label{prop: sym}
Let $M \in \End(V)$. Then, the following statements are true.
\begin{enumerate}
    \item Viewing $M$ and $\mathfrak{s}^k(M)$ as matrices with respect to the bases $\{e_1, \dotsc, e_r\}$ and $\{u_\mathbf{n} : |\mathbf{n}| = k\}$, we have that $\mathfrak{s}^k(M)^t = \mathfrak{s}^k(M^t)$ and $\overline{\mathfrak{s}^k(M)} = \mathfrak{s}^k(\overline{M})$.
    \item If $H$ is the matrix representation of the inner product on $V$ with respect to a basis $\{e_1, \dotsc, e_r\}$, then $\Sym^k H$ is the matrix representation of the induced inner product on $\Sym^k V$ with respect to the basis $\left \{ u_\mathbf{n} : |\mathbf{n}| = k \right \}$.
    \item We have that
    $$
    \|\mathfrak{s}^k(M) \|_{op} \leq C(r)k\|M\|_{op}
    $$
    for some constant $C(r) > 0$ that depends only on $r$.
    \item If $M$ is Hermitian and the set of eigenvalues of $M$ is $\{\lambda_1, \dotsc, \lambda_r\}$, then $\mathfrak{s}^k M$ is also Hermitian and the set of eigenvalues of $\mathfrak{s}^k(M)$ is 
    $$
    \left \{ \sum_{i=1}^r n_i \lambda_i : \sum_{i=1}^r n_i = k \right \}.
    $$
\end{enumerate}
\end{prop}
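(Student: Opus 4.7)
The plan is to establish parts (1) and (2) by unpacking the definitions, deduce part (4) by working in an orthonormal eigenbasis of $M$, and reduce part (3) to (4) via the standard Hermitian decomposition.

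For (1) and (2), I would work from the derivation formula $\mathfrak{s}^k(M)(v_1 \cdots v_k) = \sum_{i=1}^k v_1 \cdots M(v_i) \cdots v_k$, obtained by differentiating $\Sym^k(\exp(tM))$ at $t = 0$. Evaluating on $e_\mathbf{n}$ and expanding $M(e_j) = \sum_i M_{ij} e_i$ expresses each matrix entry of $\mathfrak{s}^k(M)$ in the basis $\{u_\mathbf{n}\}$ as an explicit polynomial in the $M_{ij}$. The symmetries of this polynomial under swapping $\mathbf{m} \leftrightarrow \mathbf{n}$ together with transposing $M$, and under replacing entries by their complex conjugates, immediately yield (1). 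For (2), I would expand $\Sym^k h(e_\mathbf{n}, e_\mathbf{m}) = \sum_{\sigma \in S_k} \prod_i h(e_{\alpha_i}, e_{\beta_{\sigma(i)}})$, where $\alpha, \beta$ are the sequences of length $k$ realizing the multiplicities $\mathbf{n}, \mathbf{m}$, and match the resulting polynomial in the $H_{ij}$ against the matrix entries of the endomorphism $\Sym^k(H)$ computed from $\Sym^k(H)(e_\mathbf{m}) = H(e_1)^{m_1} \cdots H(e_r)^{m_r}$. The factors $1/\sqrt{\mathbf{n}!\,\mathbf{m}!}$ in converting to the $\{u_\mathbf{n}\}$ basis absorb the remaining combinatorial discrepancies. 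As a sanity check, when $H = \Id$ the permutation sum collapses to $\mathbf{n}!\, \delta_{\mathbf{n},\mathbf{m}}$, so $\{u_\mathbf{n}\}$ is orthonormal for $\Sym^k h$.

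For (4), assume $M$ is Hermitian with respect to the given inner product on $V$. I would carry out the argument in an orthonormal eigenbasis $\{e_1, \dotsc, e_r\}$ for $M$, so that $M(e_i) = \lambda_i e_i$. The derivation formula then gives $\mathfrak{s}^k(M)(e_\mathbf{n}) = \bigl( \sum_i n_i \lambda_i \bigr) e_\mathbf{n}$, so $\mathfrak{s}^k(M)$ is diagonal in the basis $\{u_\mathbf{n}\}$ with the claimed spectrum. By (2) applied in this basis, $\{u_\mathbf{n}\}$ is orthonormal for $\Sym^k h$, so $\mathfrak{s}^k(M)$ is a diagonal matrix in an orthonormal basis with real diagonal entries, hence Hermitian with the advertised real spectrum.

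For (3), write $M = A + i B$ with $A = (M + M^*)/2$ and $B = (M - M^*)/(2i)$ Hermitian, satisfying $\|A\|_{op}, \|B\|_{op} \leq \|M\|_{op}$. By (4), $\|\mathfrak{s}^k(A)\|_{op} = k \max_j |\lambda_j(A)| \leq k \|A\|_{op}$, and analogously for $B$, so $\|\mathfrak{s}^k(M)\|_{op} \leq 2k\|M\|_{op}$ and one may take $C(r) = 2$. The main obstacle I anticipate is the bookkeeping in part (2): matching the permutation sum against the action of $\Sym^k(H)$ on symmetric polynomials requires tracking multinomial factors carefully. Once (1) and (2) are settled, parts (4) and (3) are straightforward applications of the derivation formula and the spectral theorem.
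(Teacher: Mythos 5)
Your proposal is correct; parts (1) and (4) run essentially along the paper's lines (differentiating $\Sym^k$ at the identity, then diagonalizing in an orthonormal eigenbasis), while parts (2) and (3) use genuinely different arguments. For (2), the paper avoids the combinatorial matching you flag as the main obstacle: it factors the Gram matrix as $H = T^{t}\overline{T}$ via a change of basis $T$ from an orthonormal basis, notes that $\{u_{\mathbf n}'\}$ built from that orthonormal basis is orthonormal in $\Sym^{k}V$ (your ``sanity check''), and then applies the identities from (1) together with multiplicativity of $\Sym^{k}$ to get $\Sym^{k}(T)^{t}\,\overline{\Sym^{k}(T)} = \Sym^{k}(T^{t}\overline{T}) = \Sym^{k}(H)$, sidestepping the permutation-sum bookkeeping entirely. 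Your direct expansion would also work but requires the multinomial cancellation you anticipate. For (3), the paper's argument is structural rather than spectral: it observes from the explicit formulas that each row and column of $\mathfrak{s}^{k}(M)$ has at most $r(r-1)+1$ nonzero entries, each of size $O(k\|M\|_{op})$, and then applies Cauchy--Schwarz, picking up an $r$-dependent constant via equivalence of norms on $M_{r\times r}(\mathbb{C})$. Your reduction to the Hermitian case via $M = A + iB$ and part (4) is cleaner, yields the dimension-free constant $C = 2$, and is a strict improvement over the paper's bound; the only caveat is that it makes (3) depend logically on (4) and (2), which is fine since those are established independently. Overall the proposal is sound, with a nicer proof of (3) and a bookkeeping-heavier (but valid) proof of (2).
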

\begin{proof}

Let $\{e_1, \dotsc, e_r\}$ be a basis for $V$. Denote the coordinates on $\GL(V)$ induced from the chosen basis by $(a_{ij})_{1 \leq i,j \leq r}$. So, for any $A \in \GL(V)$,
$$
A(e_j) = \sum_{i=1}^r a_{ij}(A) e_i.
$$
Then, the induced automorphism $\Sym^k A \in \GL(\Sym^k V)$ is defined by
\begin{equation} \label{eq: symmat}
\Sym^k A({e_1}^{n_1} \dotsb {e_r}^{n_r}) = \left ( \sum_{i_1 =1}^r a_{i_11}(A) e_{i_1} \right )^{n_1} \dotsb \left ( \sum_{i_r =1}^r a_{i_rr}(A) e_{i_r} \right )^{n_r}.
\end{equation}
Let $(a_{\mathbf{n}\mathbf{m}})_{|\mathbf{n}|=|\mathbf{m}| = k}$ denote the coordinates on $\GL_{r_k}(\mathbb{C})$ induced from the basis $\{e_{\mathbf{n}} : |\mathbf{n}| = k\}$ for $\Sym^k V$. Using \eqref{eq: symmat}, we can compute the derivative $\mathfrak{s}^k = d \Sym^k \mid_{\Id}$. Indeed, for any $1 \leq i \leq r$,
$$
\mathfrak{s}^k \left ( \tanv{a_{ii}} \right ) = \sum_{|\mathbf{n}| = k} n_i \tanv{a_{\mathbf{n}\mathbf{n}}}
$$
and, for any $1 \leq i,j \leq r$ with $i \neq j$,
$$
\mathfrak{s}^k \left ( \tanv{a_{ij}} \right ) = \sum_{|\mathbf{n}| = k, n_j >0 } n_j \tanv{a_{f_{ij}(\mathbf{n})\mathbf{n}}}
$$
where $f_{ij}(\mathbf{n}) = (m_1, \dotsc, m_r)$ with $m_i = n_i+1$, $m_j = n_j-1$, and $m_\ell = n_\ell$ for $\ell \neq i,j$.

Let $(b_{\mathbf{n}\mathbf{m}})_{|\mathbf{n}|=|\mathbf{m}| = k}$ denote the coordinates on $\GL(\Sym^k V)$ induced from the basis $\{u_{\mathbf{n}} : |\mathbf{n}|=k\}$. Then, 
$$
b_{\mathbf{n}\mathbf{m}} = \frac{\sqrt{n_1! \dotsb n_r!}}{\sqrt{m_1! \dotsb m_r!}} a_{\mathbf{n}\mathbf{m}}
$$
and
$$
 \frac{\sqrt{n_1! \dotsb n_r!}}{\sqrt{m_1! \dotsb m_r!}} \tanv{b_{\textbf{n}\textbf{m}}} = \tanv{a_{\textbf{n}\textbf{m}}}.
$$
As a result,
\begin{gather*}
\mathfrak{s}^k \tanv{a_{ii}} = \sum_{|\mathbf{n}| = k} n_i \tanv{b_{\mathbf{n}\mathbf{n}}} \text{ and }  \\
\mathfrak{s}^k \tanv{a_{ij}} = \sum_{|\mathbf{n}| = k, n_j > 0} \sqrt{(n_i + 1)n_j} \tanv{b_{f_{ij}(\mathbf{n})\mathbf{n}}}.
\end{gather*}

With respect to the bases $\{e_1, \dotsc, e_r\}$ and $\{u_\mathbf{n} : |\mathbf{n}| = k\}$, the map $\mathfrak{s}^k : M_{r \times r}(\mathbb{C}) \to M_{r_k \times r_k}(\mathbb{C})$ is real. Moreover, for $i \neq j$,
\begin{equation*}
    \begin{split}
        \mathfrak{s}^k \left ( \tanv{a_{ij}} \right )^t & = \left ( \sum_{|\mathbf{n}| =   k, n_j >0 } \sqrt{(n_i+1)n_j} \tanv{b_{f_{ij}(\mathbf{n})\mathbf{n}}} \right )^t \\
        & = \sum_{|\mathbf{n}| =   k, n_j >0 } \sqrt{(n_i+1)n_j} \tanv{b_{\mathbf{n}f_{ij}(\mathbf{n})}} \\
        & = \sum_{|\mathbf{m}| =   k, m_i >0 } \sqrt{(m_j+1)m_i} \tanv{b_{f_{ji}(\mathbf{m})\mathbf{m}}} \\
        & = \mathfrak{s}^k \left ( \tanv{a_{ji}}\right ).
    \end{split}
\end{equation*}
Thus, $(1)$ holds.

Let $\{e_1', \dotsc, e_r'\}$ be an orthonormal basis of $V$ and let $T$ be the change of basis matrix from $\{e_1', \dotsc, e_r'\}$ to $\{e_1, \dotsc, e_r\}$. In particular, the matrix representation, with respect to the basis $\{e_1, \dotsc, e_r\}$, of the inner product on $V$ is
$$
H = T^t \overline{T}.
$$
Since $\{e_1', \dotsc, e_r'\}$ is an orthonormal basis, $\{u_\mathbf{n}' : |\mathbf{n}| = k\}$ is an orthonormal basis of $\Sym^k V$. Moreover, $\Sym^k (T)$ is the change of basis matrix from $\{u_\mathbf{n}' : |\mathbf{n}| = k\}$ to $\{u_\mathbf{n} : |\mathbf{n}| = k\}$. Thus, the matrix representation, with respect to the basis $\{u_\mathbf{n} : |\mathbf{n}| = k\}$, of the induced inner product on $\Sym^k V$ is
\begin{equation*}
\begin{split}
    \Sym^k(T)^t \overline{\Sym^k(T)} & = \Sym^k (T^t) \Sym^k(\overline{T}) \\
    & = \Sym^k (T^t\overline{T}) \\
    & = \Sym^k (H).
\end{split}
\end{equation*}
This proves $(2)$.

Now, let $M \in \End(V)$ be given. We use an orthonormal basis $\{e_1', \dotsc, e_r'\}$ for $V$ to write $M$ as the matrix $(m_{ij})_{1 \leq i,j \leq r}$. Since all norms on finite dimensional vector spaces are equivalent, there exists a constant $C'$ which depends on $r$ such that 
$$
\max_{1\leq i,j \leq r} |m_{ij}| \leq C' \|M\|_{op}.
$$
The formulas above imply that each entry in $\mathfrak{s}^k (M)$ is bounded from above by $C'k \|M\|_{op}$ and that any row or any column of $\mathfrak{s}^k (M)$ has at most $r(r-1) + 1$ nonzero entries. So the Cauchy-Schwarz inequality implies that, for any $v \in \Sym^k V$,
$$
\|\mathfrak{s}^k(M) (v)\|^2 \leq (r(r-1)+1)C'^2 k^2\|M\|_{op}^2 \|v\|^2.
$$
Thus,
$$
\|\mathfrak{s}^k (M)\|_{op} \leq Ck \|M\|_{op}
$$
for some $C > 0$ that depends only on $r$. This proves $(3)$.

To prove $(4)$, suppose that $M$ is Hermitian. Choose an orthonormal basis of $V$ consisting of eigenvectors $\{e_1', \dotsc, e_r'\}$ of $M$. Let $\lambda_i$ be the eigenvalue of $e_i$ for each $i$. So,
$$
M = \sum_{i=1}^r \lambda^i \tanv{a_{ii}}
$$
and
$$
\mathfrak{s}^k (M) = \sum_{i=1}^r \lambda^i \sum_{|\mathbf{n}| = k} n_i \tanv{b_{\mathbf{n}\mathbf{n}}} = \sum_{|\mathbf{n}| = k} \sum_{i=1}^r n_i\lambda^i \tanv{b_{\mathbf{n}\mathbf{n}}}.
$$

\end{proof}

\newpage

\section{Computing $b_{k,0}$ and $b_{k,1}$} 

\begin{prop} \label{prop: computation}
Let $x_0 \in X$ and $k \in \mathbb{N}$. Choose a normal holomorphic frame of $E$ at $x_0$. Let $U$ be a coordinate neighborhood of $x_0$ on which the Hermitian metric $h$ can be written in the form 
$$
h(x) = e^{-\phi(x)} = e^{-\psi(x,\overline{x})}.
$$ 
Let $F$ denote the curvature of $(\Sym^k E, \Sym^k h)$ and let $\Scal_\omega$ denote the scalar curvature of $\omega$. Then,
$$
b_{k,0}(x,\overline{x}) = \frac{  \sqrt{-1}\Lambda (F)(x) }{k}
$$
and
\begin{equation*}
\begin{split}
b_{k,1}(x,\overline{x})  = &  -\frac{1}{2}(\Lambda (F) )^{-1}\Lambda (\Delta (F)) (x) 
+ \frac{1}{2}\Scal_\omega \Id_{\Sym^k E} (x)  \\
    & +  \frac{\sqrt{-1}}{2} \Lambda  \left ( (\Lambda(F))^{-1} \nabla^{1,0}(\Lambda(F)) \wedge (\Lambda(F))^{-1} \nabla^{0,1}(\Lambda (F))   \right ) (x)
\end{split}
\end{equation*}
for all $x \in U$. Alternatively, we can write
\begin{equation*}
    b_{k,1}(x,\overline{x})  =    -\frac{1}{2} \Lambda \left (  \overline{\partial} \left ( (\Lambda (F) )^{-1} \overline{\partial}^* (F) \right ) \right ) (x) 
+ \frac{1}{2}\Scal_\omega \Id_{\Sym^k E} (x)
\end{equation*}
for all $x \in U$.
\end{prop}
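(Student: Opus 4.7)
The plan is to unpack the recursive definition of $b_{k,0}$ and $b_{k,1}$ from the Construction of Negligible Amplitudes section, then evaluate the resulting expressions at $z=\overline{x}$ using the power series for $\psi$, $\theta$, and $\tilde g$. Recall that
\begin{equation*}
b_{k,0}(x,z) = \frac{1}{k}\mathfrak{s}^k\!\left(\frac{\partial \theta}{\partial z}(x,x,z)\right)\tilde g(x,z)^{-1}, \qquad b_{k,1}(x,z) = \tilde g(x,z)^{-1}\frac{\partial A_{k,0}}{\partial z}(x,x,z).
\end{equation*}
Since the left-hand sides of the formulas we wish to prove are manifestly tensorial in $(E,h,\omega)$, it suffices to verify them at a single point $x_0$ after choosing a holomorphic frame of $E$ that is normal at $x_0$ (so that $\phi(x_0)=0$ and $\partial\phi(x_0)=0$, forcing $\psi(x_0,0)=0$ and $\partial_1\psi(x_0,0)=0$) and a coordinate in which $\varphi$ has vanishing holomorphic derivatives at $x_0$. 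In this gauge $\mathfrak{s}^k(\phi)$ conjugation acts trivially, so operator norms coincide with matrix norms.

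For $b_{k,0}$ I would use the paper's identity $\tau(x,x,\overline{x}) = \tilde F(x,\overline{x})$ (which follows from \eqref{eq: tau2} after expanding $P$ to first order in $y-x$ at $y=x$ using the Duhamel/BCH differential $\log(e^{-A+\epsilon M}e^A) = \epsilon\!\int_0^1 e^{-s\,\mathrm{ad}\,A}(M)\,ds+O(\epsilon^2)$), so that
\begin{equation*}
b_{k,0}(x,\overline{x}) = \frac{1}{k}\,\mathfrak{s}^k(\tilde F(x,\overline{x}))\,\tilde g(x,\overline{x})^{-1}.
\end{equation*}
Recognising $\mathfrak{s}^k(\tilde F)\,dy\wedge d\overline y = F_{\Sym^k h}$ via Proposition \ref{prop: sym} and $\sqrt{-1}\,\tilde g\,dy\wedge d\overline y=\omega$, the right-hand side is exactly $\sqrt{-1}\Lambda F_{\Sym^k h}/k$.

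For $b_{k,1}$ the key observation is that $a_{k,0}(x,x,z)=0$, so
\begin{equation*}
A_{k,0}(x,x,z) = -\,\partial_2 a_{k,0}(x,x,z),
\end{equation*}
the unique holomorphic extension of $a_{k,0}/(x-y)$ at $y=x$. Differentiating
\begin{equation*}
a_{k,0}(x,y,z)=\mathfrak{s}^k(\tau(x,y,z))^{-1}\mathfrak{s}^k(\tau(x,x,z))\,\tilde g(x,z)^{-1}\tilde g(y,z)-\mathrm{Id}
\end{equation*}
once in $y$ (setting $y=x$), then once in $z$ (setting $z=\overline x$), and multiplying by $\tilde g(x,\overline x)^{-1}$ produces three families of terms, which I would group as follows: (i) the term $-\tfrac{1}{2}(\Lambda F_{\Sym^k h})^{-1}\Lambda\Delta F_{\Sym^k h}$ comes from the mixed derivative $\partial_z\partial_y\tau(x,x,\overline x)$ pushed through $\mathfrak{s}^k$; (ii) the mixed $\nabla^{1,0}\wedge\nabla^{0,1}$ bracket arises from the cross term in which both $\mathfrak{s}^k(\tau)^{-1}$ and $\mathfrak{s}^k(\tau(x,x,z))$ are differentiated (the noncommutativity of the endomorphism factors producing the asymmetric wedge); (iii) the $\tfrac{1}{2}\Scal_\omega\,\mathrm{Id}$ term appears from the Kähler-form factor $\tilde g(x,z)^{-1}\tilde g(y,z)$, since $\partial_z\partial_y\log\tilde g(x,\overline x)$ is exactly $-\tfrac{1}{2}\Scal_\omega$ (in a normal coordinate for $\omega$ at $x_0$). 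The equivalent formula involving $\overline\partial^*F$ then follows from the Kähler identity $[\overline\partial,\Lambda]=-\sqrt{-1}\,\partial^*$ applied to $F_{\Sym^k h}$, together with the Bianchi identity $\overline\partial F = 0$.

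The principal obstacle is step (ii)/(iii): the Duhamel-type expansions of $\tau$ in the matrix setting produce nested commutators that must be carefully matched to the curvature expressions $\nabla^{1,0}\Lambda F\cdot(\Lambda F)^{-1}\wedge \nabla^{0,1}\Lambda F\cdot(\Lambda F)^{-1}$ and to $\Lambda\Delta F$. The normal frame at $x_0$ kills linear-order contributions from $\phi$, leaving exactly the curvature-derivative data that can be reorganised; once this bookkeeping is done at $x_0$, the tensorial nature of both sides promotes the identity to all of $U$, completing the proof.
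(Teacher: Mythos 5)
Your overall plan---unpack the recursive definitions of $b_{k,0}$, $a_{k,0}$, $A_{k,0}$, $b_{k,1}$, Taylor-expand in $y$ around $x$, and match the resulting expressions against curvature invariants---is precisely the strategy the paper takes. The computation of $b_{k,0}$ via $\mathfrak{s}^k(\tau(x,x,z)) = \tilde F(x,z)$ and the identity $A_{k,0}(x,x,z) = -\partial_y a_{k,0}(x,x,z)$ are both correct. However, there are two genuine gaps.

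First, the reduction to a single point via a normal frame for $E$ \emph{and} normal K\"ahler coordinates for $\omega$ is not justified. You assert that the left-hand sides are ``manifestly tensorial,'' but the local coefficients $b_{k,j}(x,\overline x)$ are defined by a recursion in terms of the frame- and coordinate-dependent data $\psi$, $\theta$, $\tau$, $\tilde g$; their invariance under change of frame is a \emph{consequence} of the proposition, not an a priori fact one may invoke. Moreover, because the coefficients $b_{k,j}$ themselves carry hidden $k$-dependence in the higher-rank setting, the usual ``uniqueness of the asymptotic expansion'' argument that would force frame-independence is no longer automatic: one could in principle shift an $O(1/k)$ discrepancy between $b_{k,0}$ and $b_{k,1}$. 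The paper sidesteps this entirely by computing at a general $x\in U$ in a general normal frame, so that the conclusion holds on all of $U$ without appealing to tensoriality.

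Second, the detailed bookkeeping for $b_{k,1}$ is not correct as stated, and you flag it as the ``principal obstacle'' rather than resolving it. In particular, your claim (iii) that $\partial_z\partial_y\log\tilde g(x,\overline x) = -\tfrac{1}{2}\Scal_\omega$ is off: using the paper's identity $\Scal_\omega = -\tilde g^{-1}\partial_2(\partial_1(\tilde g)\tilde g^{-1})$, the term $-\tilde g^{-1}\partial_2(\partial_1\tilde g\,\tilde g^{-1})$ coming directly from the $\tilde g(x,z)^{-1}\tilde g(y,z)$ factor contributes the \emph{full} $\Scal_\omega$, and the extra $-\tfrac{1}{2}\Scal_\omega$ only emerges after combining with the $\partial_2(\tilde g)\partial_1(\tilde g^{-1})$ piece hidden inside $\Lambda(\overline\partial((\Lambda F)^{-1})\wedge\overline\partial^*F)$. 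Similarly, the Laplacian term $-\tfrac12(\Lambda F)^{-1}\Lambda\Delta F$ does not come solely from $\partial_z\partial_y\tau$; it is assembled from several terms of the product rule using the identities $\partial_2(\partial_1^2(H)H^{-1}) = -\partial_1\tilde F - \tilde F\eta - \eta\tilde F$ and its $\partial_2$-derivative. These algebraic reorganizations are the substance of the proof; without carrying them out, the identification of the three groups of terms remains a heuristic rather than a demonstration.
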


\begin{proof}
Set 
$$
H(y,z) := \Sym^k h(y,z) = e^{-\mathfrak{s}^k(\psi(y,z))}.
$$
We rewrite \eqref{eq: tau2} as
\begin{equation} \label{eq: tau3}
\mathfrak{s}^k( \tau(x,y,z)) = \frac{H(x,z)H(y,z)^{-1} \partial_2 \left ( H(y,z) H(x,z)^{-1}\right )}{x-y}.
\end{equation}
Taking the Taylor expansion centered at $x$ of $H(x,z)H(y,z)^{-1}$ and $\partial_2\left ( H(y,z) H(x,z)^{-1} \right )$ with respect to the variable $y$, we see that
\begin{align*}
H(x,z) H(y,z)^{-1} = & \hspace{2pt} \Id -  \partial_1(H) H^{-1}(x,z) (y-x) -\frac{1}{2}  \partial_1^2 (H)  H^{-1} (x,z) (y-x)^2  \\
    & +  \partial_1 (H) H^{-1} \partial_1(H)H^{-1}(x,z) (y-x)^2 + \text{h.d.t.}
\end{align*}
and
\begin{align*}
\partial_2(H(y,z)H(x,z)^{-1}) = & \hspace{2pt} \partial_2(\partial_1 (H)H^{-1})(x,z)(y-x) + \frac{1}{2} \partial_2 (\partial_1^2 (H)H^{-1})(x,z)(y-x)^2  + \text{h.d.t}
\end{align*}
where $\text{h.d.t.}$ denotes a sum of higher degree terms. By \eqref{eq: tau3},
\begin{align*}
\mathfrak{s}^k(\tau(x,y,z)) = & \hspace{2pt}  - \partial_2(\partial_1 (H) H^{-1})(x,z) - \frac{1}{2} \partial_2(\partial_1^2(H) H^{-1})(x,z)(y-x)  \\
    & + \partial_1(H) H^{-1} \partial_2(\partial_1(H) H^{-1})(x,z)(y-x) + \text{h.d.t.}.
\end{align*}
Then,
\begin{align*}
\mathfrak{s}^k(\tau(x,y,z))^{-1} = & \hspace{2pt}  - (\partial_2(\partial_1 (H)H^{-1}))^{-1} (x,z) \\
    & + \frac{1}{2} (\partial_2(\partial_1 (H)H^{-1}))^{-1}\partial_2(\partial_1^2(H)H^{-1})(\partial_2(\partial_1 (H)H^{-1}))^{-1}(x,z)(y-x)  \\
    & - (\partial_2(\partial_1 (H)H^{-1}))^{-1}\partial_1(H) H^{-1} (x,z)(y-x) \\
    & + \text{h.d.t.}.
\end{align*}
Additionally, we can write 
$$
\omega(y) = \sqrt{-1} \tilde{g}(y,\overline{y}) \,dy \wedge d\overline{y}
$$
and
$$
\tilde{g}(y,z) = \tilde{g}(x,z) + \partial_1 \tilde{g}(x,z)(y-x) + \text{h.d.t.}.
$$

Set
$$
\tilde{F}(y,z) : = - \partial_2 (\partial_1 (H) H^{-1}) (y,z)
\hspace{10pt} \text{ and } \hspace{10pt}
\eta(y,z) := \partial_1 (H) H^{-1} (y,z).
$$
We have shown in \S5 that
\begin{align*}
b_{k,0}(x,z) & = \frac{1}{k} \mathfrak{s}^k(\tau(x,x,z)) \tilde{g}(x,z)^{-1}, \\
a_{k,0}(x,y,z) & = k\tilde{g}(y,z) \mathfrak{s}^k(\tau(x,y,z))^{-1} b_{k,0}(x,z) - \Id, \\
A_{k,0}(x,y,z) & = a_{k,0}(x,y,z) (x-y)^{-1}, \hspace{10pt} \text{ and } \\
b_{k,1}(x,z) & = \tilde{g}(x,z)^{-1} \deriv{A_{k,0}}{z}(x,x,z).
\end{align*}
So,
$$
b_{k,0}(x,z) = \frac{1}{k} \tilde{F}(x,z) \tilde{g}^{-1}(x,z),
$$
\begin{align*}
a_{k,0}(x,y,z) = & \hspace{2pt} \frac{1}{2} (\tilde{F})^{-1} (\partial_2(\partial_1^2(H)H^{-1}))(x,z) (y-x) + (\tilde{F})^{-1} \eta \tilde{F} (x,z)(y-x) \\
    &  +  \partial_1 (\tilde{g})\tilde{g}^{-1} (x,z)(y-x) + \text{h.d.t.},
\end{align*}
\begin{align*}
A_{k,0}(x,y,z) =  -\frac{1}{2} (\tilde{F})^{-1} (\partial_2(\partial_1^2(H)H^{-1}))(x,z) - (\tilde{F})^{-1} \eta \tilde{F} (x,z) -  \partial_1 (\tilde{g})\tilde{g}^{-1} (x,z)  + \text{h.d.t.},
\end{align*}
and
\begin{align*}
b_{k,1}(x,z) = & \hspace{2pt}   \frac{1}{2} \tilde{g}^{-1}  (\tilde{F})^{-1} \partial_2 (\tilde{F})(\tilde{F})^{-1} \partial_2(\partial_1^2(H)H^{-1}))  (x,z)\\
    & -\frac{1}{2} \tilde{g}^{-1}  (\tilde{F})^{-1}  \partial_2^2 (\partial_1^2(H)H^{-1})  (x,z) \\
    & + \tilde{g}^{-1} (\tilde{F})^{-1} \partial_2 (\tilde{F})(\tilde{F})^{-1} \eta \tilde{F} (x,z) \\
    & + \tilde{g}^{-1} \tilde{F} (x,z) - \tilde{g}^{-1} (\tilde{F})^{-1}\eta \partial_2 (\tilde{F})(x,z) \\
    & - \tilde{g}^{-1} \partial_2 ( \partial_1 (\tilde{g})\tilde{g}^{-1})(x,z) .
\end{align*}
Using the identities
\begin{equation*}
    \partial_2(\partial_1^2(H)H^{-1}) = -\partial_1 (\tilde{F}) - \tilde{F} \eta - \eta \tilde{F}
\end{equation*}
and
\begin{equation*}
    \partial_2^2 (\partial_1^2(H)H^{-1}) = -\partial_1\partial_2 (\tilde{F}) -\partial_2(\tilde{F}) \eta - \eta \partial_2(\tilde{F}) + 2\tilde{F} \tilde{F},
\end{equation*}
we can simplify the formula for $b_{k,1}(x,z)$ to
\begin{align*}
b_{k,1}(x,z) = & \hspace{2pt}   - \frac{1}{2} \tilde{g}^{-1}  (\tilde{F})^{-1} \partial_2 (\tilde{F})(\tilde{F})^{-1}  \partial_1 (\tilde{F})   (x,z)\\
    & + \frac{1}{2} \tilde{g}^{-1}  (\tilde{F})^{-1}  \partial_1\partial_2 (\tilde{F})   (x,z) \\
    & + \frac{1}{2}\tilde{g}^{-1} (\tilde{F})^{-1} \partial_2 (\tilde{F})(\tilde{F})^{-1} \eta \tilde{F} (x,z) \\
    &  - \frac{1}{2} \tilde{g}^{-1} (\tilde{F})^{-1}\eta \partial_2 (\tilde{F})(x,z) \\
    & - \tilde{g}^{-1} \partial_2 (\partial_1 (\tilde{g})\tilde{g}^{-1})(x,z).
\end{align*}

Now, let $F = F_{\Sym^k h}$ denote the curvature form of $\Sym^k h$. We observe that
\begin{align*}
\Scal_\omega (y) =  \hspace{2pt} & \sqrt{-1} \Lambda \overline{\partial} \left ( \partial (\tilde{g}) \tilde{g}^{-1} \right )(y) \\
    = \hspace{2pt} & -\tilde{g}^{-1} \partial_2 (  \partial_1 (\tilde{g}) \tilde{g}^{-1} ) (y,\overline{y}),
\end{align*}
\begin{align*}
\sqrt{-1} \Lambda (F)(y) =  -\tilde{g}^{-1} \tilde{F} (y,\overline{y}),
\end{align*}
\begin{align*}
\overline{\partial} \left ( \left ( \sqrt{-1}\Lambda (F) \right )^{-1} \right ) (y) 
    = & \hspace{2pt} - \partial_2 (\tilde{g}) \tilde{F}^{-1}(y,\overline{y}) \,d\overline{y} + \tilde{g} \tilde{F}^{-1} \partial_2 (\tilde{F}) \tilde{F}^{-1}(y,\overline{y}) \,d\overline{y}, 
\end{align*}
\begin{align*}
\overline{\partial}^* (F)(y) = \hspace{2pt} &  - \overline{*}_E (\overline{\partial}( \overline{*}_E (\tilde{F} \, dy \wedge d\overline{y}))) (y)\\
    = \hspace{2pt} &  -\overline{*}_E \left (\overline{\partial} \overline{\left ( \sqrt{-1} \tilde{g}^{-1} H^{-1} \tilde{F} H  \right ) }\right ) (y)\\
    = \hspace{2pt} & -H \partial_1 \left ( \tilde{g}^{-1} H^{-1} \tilde{F} H \right ) H^{-1} (y) \, dy \\
    = \hspace{2pt} & - \partial_1(\tilde{g}^{-1})\tilde{F}(y,\overline{y}) \,dy  + \tilde{g}^{-1} \eta\tilde{F}(y,\overline{y})\,dy \\
    & - \tilde{g}^{-1} \partial_1(\tilde{F}) (y,\overline{y}) \,dy - \tilde{g}^{-1}\tilde{F}\eta(y,\overline{y}) \,dy,
\end{align*}
\begin{align*}
\sqrt{-1} \Lambda  \left ( \overline{\partial}\left ( \left ( \sqrt{-1}\Lambda F \right )^{-1}\right )  \wedge \overline{\partial}^* (F) \right ) (y) = \hspace{2pt} 
    & -\tilde{g}^{-1} \partial_2(\tilde{g})\partial_1(\tilde{g}^{-1}) (y,\overline{y}) + \partial_1(\tilde{g}^{-1}) \tilde{F}^{-1} \partial_2 ( \tilde{F})  (y,\overline{y}) \\
    & - \partial_2(\tilde{g}^{-1}) \tilde{F}^{-1} \eta \tilde{F} (y,\overline{y}) - \tilde{g}^{-1} \tilde{F}^{-1} \partial_2(\tilde{F}) \tilde{F}^{-1} \eta \tilde{F} (y,\overline{y})\\
    & + \partial_2(\tilde{g}^{-1}) \tilde{F}^{-1} \partial_1(\tilde{F}) (y,\overline{y}) + \tilde{g}^{-1} \tilde{F}^{-1} \partial_2(\tilde{F}) \tilde{F}^{-1} \partial_1(\tilde{F}) (y,\overline{y}) \\
    & + \partial_2(\tilde{g}^{-1}) \eta (y,\overline{y}) + \tilde{g}^{-1} \tilde{F}^{-1} \partial_2(\tilde{F}) \eta (y,\overline{y}),
\end{align*}
and
\begin{align*}
\sqrt{-1} \Lambda (\Delta (F))(y) = \hspace{2pt} & \sqrt{-1}\Lambda (\overline{\partial}( \overline{\partial}^* (F))) (y) \\
    = \hspace{2pt} & + \tilde{g}^{-1}\partial_1\partial_2(\tilde{g}^{-1}) \tilde{F}(y,\overline{y}) + \tilde{g}^{-1} \partial_1(\tilde{g}^{-1}) \partial_2\tilde{F}(y,\overline{y}) \\
    & - \tilde{g}^{-1} \partial_2(\tilde{g}^{-1}) \eta \tilde{F} (y,\overline{y}) - \tilde{g}^{-2} \eta \partial_2 \tilde{F}(y,\overline{y}) \\
    & +\tilde{g}^{-1} \partial_2(\tilde{g}^{-1}) \partial_1\tilde{F}(y,\overline{y}) + \tilde{g}^{-2} \partial_1\partial_2\tilde{F}(y,\overline{y}) \\
    & + \tilde{g}^{-1} \partial_2(\tilde{g}^{-1}) \tilde{F} \eta(y,\overline{y}) + \tilde{g}^{-2} \partial_2\tilde{F}\eta(y,\overline{y}).
\end{align*}
Therefore,
$$
b_{k,0}(x,\overline{x}) = \frac{  \sqrt{-1}\Lambda (F)(x) }{k}
$$
and
$$
b_{k,1}(x,\overline{x}) = -\frac{1}{2}(\Lambda (F) )^{-1} \Lambda (\Delta (F)) (x) 
+ \frac{1}{2}\Scal_\omega(x) -  \frac{1}{2} \Lambda  \left ( \overline{\partial} \left ( (\Lambda (F))^{-1} \right ) \wedge \overline{\partial}^* (F)  \right ) (x).
$$
Note that the Bianchi identity and the Nakano identity imply that
\begin{equation*}
    \overline{\partial}^* F = \sqrt{-1} \nabla^{1,0}(\Lambda( F)).
\end{equation*}
As a result,
\begin{equation*}
\begin{split}
b_{k,1}(x,\overline{x})  = &  -\frac{1}{2}(\Lambda (F) )^{-1}\Lambda (\Delta (F)) (x) 
+ \frac{1}{2}\Scal_\omega(x)  \\
    & +  \frac{\sqrt{-1}}{2} \Lambda  \left ( (\Lambda(F))^{-1} \nabla^{0,1}(\Lambda(F)) \wedge (\Lambda(F))^{-1} \nabla^{1,0}(\Lambda (F))   \right ) (x).
\end{split}
\end{equation*}
On the other hand, the equation
\begin{equation*}
(\Lambda (F) )^{-1} \Lambda (\Delta (F)) (x) 
+ \Lambda  \left ( \overline{\partial} \left ( (\Lambda (F))^{-1} \right ) \wedge \overline{\partial}^* (F)  \right ) (x) =  \Lambda \left (  \overline{\partial} \left ( (\Lambda (F) )^{-1} \overline{\partial}^* (F) \right ) \right ) (x) 
\end{equation*}
implies that 
\begin{equation*}
    b_{k,1}(x,\overline{x})  =    -\frac{1}{2} \Lambda \left (  \overline{\partial} \left ( (\Lambda (F) )^{-1} \overline{\partial}^* (F) \right ) \right ) (x) 
+ \frac{1}{2}\Scal_\omega(x).
\end{equation*}

\end{proof}

\begin{cor} \label{cor: Riemann-Roch}
We have that
\begin{equation*}
    \dim H^0(X, \Sym^k E) = \int_X \tr (F_{\Sym^k h}) + \frac{1}{2} r_k \int_X \Scal_\omega  \omega + O(r_k k^{-1}),
\end{equation*}
where $r_k = \rk(\Sym^k E)$.
\end{cor}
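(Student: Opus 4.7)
The starting point is the trace identity
\begin{equation*}
\dim H^0(X, \Sym^k E) = \int_X \tr B_k(x) \, \omega.
\end{equation*}
This follows directly from the definition $B_k(x) = \sum_i \langle \cdot, s_i(x)\rangle_{\Sym^k h} s_i(x)$: taking the trace in the fiber at $x$ gives $\sum_i |s_i(x)|^2_{\Sym^k h}$, and integrating against $\omega$ recovers $\sum_i (s_i,s_i) = d_k$ because $\{s_i\}$ is $L^2$-orthonormal.

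Next I would apply Theorem \ref{thm: Main 2} with $N=1$ and $p=0$ to obtain the uniform operator-norm expansion
\begin{equation*}
B_k(x) = k \, b_{k,0}(x) + b_{k,1}(x) + O(k^{-1}) \qquad \text{in } \|\cdot\|_{op}.
\end{equation*}
Since for any $r_k \times r_k$ matrix $M$ one has $|\tr M| \leq r_k \|M\|_{op}$, the pointwise trace satisfies
\begin{equation*}
\tr B_k(x) = k \tr b_{k,0}(x) + \tr b_{k,1}(x) + O(r_k k^{-1}),
\end{equation*}
uniformly in $x$. Integrating against the (fixed, finite) measure $\omega$ preserves the $O(r_k k^{-1})$ error.

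The remaining task is to evaluate $\int_X \tr b_{k,0}\, \omega$ and $\int_X \tr b_{k,1}\, \omega$ using the explicit formulas from Proposition \ref{prop: computation}. For the leading term, $k \tr b_{k,0}(x) = \tr(\sqrt{-1}\Lambda F_{\Sym^k h})(x)$, and on a Riemann surface $(\sqrt{-1} \Lambda \alpha)\, \omega = \sqrt{-1}\,\alpha$ for any matrix-valued $(1,1)$-form $\alpha$; this identifies $\int_X k \tr b_{k,0}\, \omega$ with $\int_X \tr F_{\Sym^k h}$ (up to the sign/normalization built into the paper's convention for the curvature). For the subleading term I would use the alternative formula
\begin{equation*}
b_{k,1}(x) = -\tfrac{1}{2}\Lambda\!\left( \overline{\partial}\bigl((\Lambda F_{\Sym^k h})^{-1}\overline{\partial}^* F_{\Sym^k h}\bigr)\right)(x) + \tfrac{1}{2}\Scal_\omega(x)\Id_{\Sym^k E}.
\end{equation*}
The second summand contributes $\tfrac{r_k}{2}\int_X \Scal_\omega \, \omega$ immediately since $\tr \Id_{\Sym^k E} = r_k$.

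The main thing to check is that the derivative term contributes zero after integration. Since trace commutes with $\overline{\partial}$ and $\Lambda$, this term becomes $-\tfrac{1}{2}\Lambda\overline{\partial}\gamma$ where $\gamma := \tr\bigl((\Lambda F_{\Sym^k h})^{-1} \overline{\partial}^* F_{\Sym^k h}\bigr)$ is a scalar $(1,0)$-form on $X$. On a Riemann surface $\Lambda\overline{\partial}\gamma \cdot \omega = \overline{\partial}\gamma$ as a top form, and $\partial\gamma = 0$ for type reasons, so $\int_X \Lambda\overline{\partial}\gamma \cdot \omega = \int_X d\gamma = 0$ by Stokes' theorem. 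This is the only subtle step; once it is verified, combining the pieces yields the stated formula.
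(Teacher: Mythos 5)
Your proof is correct and follows essentially the same route as the paper: integrate $\tr B_k$ to recover $\dim H^0$, plug in the asymptotic expansion, identify the $\Scal_\omega$ contribution, and kill the remaining derivative term by Stokes applied to $d\bigl(\tr((\Lambda F)^{-1}\overline{\partial}^* F)\bigr)$. The only addition you make explicit that the paper leaves implicit is the bound $|\tr M| \le r_k\|M\|_{op}$ justifying the $O(r_k k^{-1})$ error when passing from the operator-norm expansion to the trace.
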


\begin{proof}
As before, let $\{s_i\}_{i=1}^{d_k}$ be an orthonormal basis of $H^0(X, \Sym^k E)$. By \eqref{eq : AA^*},
\begin{equation*}
\tr (B_k(x,x)) = \tr\left ( \overline{A(x)}A(x)^t \right ),
\end{equation*}
where $A(x)$ is an $r_k \times d_k$ matrix whose $i$-th column is $s_i(z)$ written with respect to an orthonormal frame. Thus,
\begin{equation*}
    \int_X \tr (B_k(x,x)) \omega = \int_X \sum_{i=1}^{d_k} |s_i(x)|_{\Sym^k h}^2 \omega = \dim H^0(X, \Sym^k E).
\end{equation*}
On the other hand, Theorem \ref{thm: Main} implies that
\begin{equation*}
\begin{split}
    & \int_X \tr (B_k(x,x)) \omega \\
    = & \int_X \tr(F_{\Sym^k h}) + \frac{1}{2} r_k \int_X \Scal_\omega \omega  -\frac{1}{2} \int_X  \tr\left (  \overline{\partial} \left ( (\Lambda (F) )^{-1} \overline{\partial}^* (F) \right ) \right ) +  O(r_k k^{-1}).
\end{split}
\end{equation*}
The corollary now follows from the fact that
\begin{equation*}
    \int_X  \tr\left (  \overline{\partial} \left ( (\Lambda (F) )^{-1} \overline{\partial}^* (F) \right ) \right ) =  \int_X d \left (\tr \left (   (\Lambda (F) )^{-1} \overline{\partial}^* (F) \right ) \right ) = 0.
\end{equation*}
\end{proof}

\begin{cor} \label{cor: line bundle computation}
Suppose that $E$ is a positive line bundle. Set $\omega' = \sqrt{-1} F_h$. Then,
\begin{equation*}
b_{k,0}(x) = \Lambda_\omega \omega' (x)
\end{equation*}
and
\begin{equation*}
b_{k,1}(x) =  \Scal_\omega (x) - \frac{1}{2} \Lambda_\omega \omega' (x) \Scal_{\omega'} (x).
\end{equation*}
\end{cor}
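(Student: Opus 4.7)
The plan is to specialize Proposition \ref{prop: computation} to the rank-one case, using the alternative form
\begin{equation*}
b_{k,1}(x,\overline{x})  =    -\frac{1}{2} \Lambda \left (  \overline{\partial} \left ( (\Lambda (F) )^{-1} \overline{\partial}^* (F) \right ) \right ) (x)
+ \frac{1}{2}\Scal_\omega(x),
\end{equation*}
which is much cleaner than the three-term version. The crucial simplification in rank one is that all endomorphisms are scalars, so there are no ordering or commutativity issues, and moreover $F_{\Sym^k h} = k F_h$, so scalings by $k$ will cancel cleanly.

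For $b_{k,0}$: write $F_h = -\sqrt{-1}\omega'$, so that $\Lambda_\omega F_h = -\sqrt{-1}\Lambda_\omega \omega'$. Then
\begin{equation*}
b_{k,0}(x) = \frac{\sqrt{-1}}{k} \Lambda F_{\Sym^k h}(x) = \frac{\sqrt{-1}}{k} \cdot k \Lambda F_h(x) = \sqrt{-1}(-\sqrt{-1})\Lambda_\omega \omega'(x) = \Lambda_\omega \omega'(x),
\end{equation*}
which is the claimed formula.

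For $b_{k,1}$: the first step is to observe that the quantity $(\Lambda F)^{-1} \overline{\partial}^* F$ appearing in the alternative form is independent of $k$. Indeed, $(\Lambda F_{\Sym^k h})^{-1} = \frac{1}{k}(\Lambda F_h)^{-1}$ and $\overline{\partial}^* F_{\Sym^k h} = k \overline{\partial}^* F_h$, so the $k$'s cancel. Using the Bianchi/Nakano identity $\overline{\partial}^* F_h = \sqrt{-1}\,\partial(\Lambda F_h)$ (noted at the end of the proof of Proposition \ref{prop: computation}) and the scalar identity $\alpha^{-1}\partial\alpha = \partial\log\alpha$, I would reduce
\begin{equation*}
(\Lambda F_h)^{-1} \overline{\partial}^* F_h = \sqrt{-1}\, \partial \log(\Lambda_\omega \omega').
\end{equation*}
Applying $\overline{\partial}$ and then $\Lambda$ gives $\sqrt{-1}\,\Lambda \overline{\partial}\partial \log(\Lambda_\omega \omega')$.

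The last step is to recognize the right-hand side in terms of scalar curvatures. Writing $\omega = \sqrt{-1}\tilde{g}\, dy \wedge d\overline{y}$ and $\omega' = \sqrt{-1} g'\, dy \wedge d\overline{y}$, the proof of Proposition \ref{prop: computation} already records
\begin{equation*}
\Scal_\omega = \sqrt{-1}\, \Lambda_\omega \overline{\partial}\partial \log \tilde{g},
\end{equation*}
and the analogous identity with $\omega'$ in place of $\omega$ gives $\Scal_{\omega'} = \sqrt{-1}\,\Lambda_{\omega'} \overline{\partial}\partial \log g'$. Splitting $\log(g'/\tilde g) = \log g' - \log \tilde g$ and using $\Lambda_\omega(\cdot) = (\Lambda_\omega \omega')\Lambda_{\omega'}(\cdot)$ on any $(1,1)$-form then yields
\begin{equation*}
\sqrt{-1}\,\Lambda_\omega \overline{\partial}\partial \log(\Lambda_\omega \omega') = (\Lambda_\omega \omega')\,\Scal_{\omega'} - \Scal_\omega.
\end{equation*}
Plugging into the alternative formula and simplifying gives
\begin{equation*}
b_{k,1}(x) = -\tfrac{1}{2}\bigl((\Lambda_\omega\omega')\Scal_{\omega'} - \Scal_\omega\bigr) + \tfrac{1}{2}\Scal_\omega = \Scal_\omega(x) - \tfrac{1}{2}(\Lambda_\omega \omega')(x) \Scal_{\omega'}(x),
\end{equation*}
as claimed.

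The only real obstacle is bookkeeping: keeping the factors of $\sqrt{-1}$ consistent between the conventions $F_h = -\sqrt{-1}\omega'$, $\Lambda F_h = -\sqrt{-1}\Lambda_\omega\omega'$, and the sign in $\Scal = \sqrt{-1}\Lambda\overline{\partial}\partial\log(\cdot)$. Once these conventions are pinned down (as in the proof of Proposition \ref{prop: computation}), everything reduces to the elementary Riemann-surface identity $\Scal_\omega - \Scal_\omega' \cdot \Lambda_\omega\omega' = -\sqrt{-1}\Lambda_\omega\overline{\partial}\partial\log(\Lambda_\omega\omega')$.
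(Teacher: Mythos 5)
Your proof is correct, and the final answer is the same, but you take a cleaner path through Proposition~\ref{prop: computation} than the paper does. The paper's proof of $b_{k,1}$ goes back to the raw local-coordinate expression in terms of $\tilde g$ and $\tilde F$ (after observing that the two $\eta$-terms cancel in the commutative rank-one case), regroups two of the remaining terms as $\tfrac12\tilde g^{-1}\partial_2\!\left(\tilde F^{-1}\partial_1\tilde F\right)$, and then recognizes the scalar curvatures. You instead specialize the intrinsic \emph{alternative} form $b_{k,1} = -\tfrac12\Lambda\overline{\partial}\left((\Lambda F)^{-1}\overline{\partial}^*F\right)+\tfrac12\Scal_\omega$, note $k$-independence of $(\Lambda F_{\Sym^k h})^{-1}\overline{\partial}^*F_{\Sym^k h}$, apply the Bianchi/Nakano identity $\overline{\partial}^*F=\sqrt{-1}\,\partial(\Lambda F)$ so that the bracket becomes $\sqrt{-1}\,\partial\log(\Lambda_\omega\omega')$, and then use the Riemann-surface identity
\begin{equation*}
\sqrt{-1}\,\Lambda_\omega\overline{\partial}\partial\log(\Lambda_\omega\omega') = (\Lambda_\omega\omega')\Scal_{\omega'}-\Scal_\omega,
\end{equation*}
which you justify correctly via $\log(\Lambda_\omega\omega')=\log g'-\log\tilde g$ and $\Lambda_\omega = (\Lambda_\omega\omega')\Lambda_{\omega'}$ on $(1,1)$-forms. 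What your route buys is that you never have to re-enter the local-frame bookkeeping of $\tilde g,\tilde F,\eta$ or re-derive the $\eta$-cancellation; the entire rank-one specialization reduces to the scalar identity $\alpha^{-1}\partial\alpha=\partial\log\alpha$ plus one displayed curvature identity. The paper's route is shorter if one has the intermediate coordinate formula already in hand. Both arguments are sound and reach the same conclusion.
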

\begin{proof}
We see that 
$$
b_{k,0}(x) = \frac{\sqrt{-1}}{k} \Lambda_\omega F_{\Sym_k h} = \Lambda_\omega \omega' (x).
$$
Because $E$ is a line bundle, the formula for $b_{k,1}$ reduces to
\begin{equation*}
\begin{split}
b_{k,1}(x,\overline{x}) = & - \frac{1}{2} \tilde{g}^{-1}  (\tilde{F})^{-1} \partial_2 (\tilde{F})(\tilde{F})^{-1}  \partial_1 (\tilde{F})   (x,\overline{x})\\
    & + \frac{1}{2} \tilde{g}^{-1}  (\tilde{F})^{-1}  \partial_1\partial_2 (\tilde{F})   (x,\overline{x}) \\
    & - \tilde{g}^{-1} \partial_2 (\tilde{g}^{-1} \partial_1 (\tilde{g}))(x,\overline{x}) \\
    = & \hspace{3pt} \frac{1}{2}{\tilde{g}}^{-1} \partial_2 \left ( {\tilde{F}}^{-1} \partial_1 \tilde{F}\right ) (x,\overline{x}) - \tilde{g}^{-1} \partial_2 (\tilde{g}^{-1} \partial_1 (\tilde{g}))(x,\overline{x}) \\
    = & \hspace{3pt} \Scal_\omega (x) - \frac{1}{2} \Lambda_\omega \omega' (x) \Scal_{\omega'} (x)
\end{split}
\end{equation*}

\end{proof}

Corollary \ref{cor: line bundle computation} recovers the computations in \cite{BBS}. Indeed, the results from \cite{BBS} show that
$$
b_{k,0}(x) \Lambda_{\omega'}\omega(x) = 1
$$
and
\begin{align*}
b_{k,1}(x) \Lambda_{\omega'}\omega(x)  = \Lambda_{\omega'}\omega(x) \Scal_\omega(x) - \frac{1}{2} \Scal_{\omega'}(x). 
\end{align*}

\begin{cor} \label{cor: direct sum}
Let $L_1$ and $L_2$ be line bundles with positive real analytic Hermitian metrics $h_1$ and $h_2$. For any $a,b \geq 0$, let $B_{a,b}$ denote the Bergman function associated to $(L_1^a \otimes L_2^b, h_1^a \otimes h_2^b)$ and $\omega$. Let $N$ and $p$ be any fixed nonnegative integer. Then, there exist smooth functions $b_{a,b,0}, \dotsc, b_{a,b,N}$ such that
\begin{equation*}
    B_{a,b}(x) = b_{a,b,0}(x)(a+b) + \dotsb + b_{a,b,N}(x) (a+b)^{1-N} + O \left ((a+b)^{-N} \right )
\end{equation*}
where the error term $O \left ((a+b)^{-N} \right )$ is bounded with respect to the $C^p$ norm. Furthermore,
\begin{equation*}
    b_{a,b,0}(x) = \frac{a}{a+b} \Lambda_\omega \omega_1 + \frac{b}{a+b} \Lambda_\omega \omega_2
\end{equation*}
and
\begin{equation*}
b_{a,b,1}(x) = \Scal_\omega(x) -\frac{1}{2}  \left ( a \Lambda_\omega \omega_1 + b \Lambda_\omega \omega_2 \right ) \Scal_{a\omega_1 + b\omega_2}(x). 
\end{equation*}
where $\omega_j$ is $\sqrt{-1}$ times the curvature of $h_j$ for each $j=1,2$.
\end{cor}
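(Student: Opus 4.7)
The strategy is to reduce the statement to an application of Theorem \ref{thm: Main} with $(E, h) = (L_1 \oplus L_2, h_1 \oplus h_2)$, after decomposing $\Sym^k E$ into line-bundle summands, and then to evaluate the universal coefficients $b_{k,0}, b_{k,1}$ on each summand using the line-bundle identities from Corollary \ref{cor: line bundle computation}.

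First I would establish the orthogonal block decomposition $\Sym^k E \cong \bigoplus_{a+b=k} L_1^a \otimes L_2^b$. A direct calculation with the symmetric inner product shows that, under this identification, the summands are mutually orthogonal with respect to $\Sym^k h$, and that the restriction of $\Sym^k h$ to the $(a,b)$-summand agrees with $h_1^a \otimes h_2^b$ up to the combinatorial constant $a!\,b!$. This constant affects neither the Bergman function (a positive rescaling of the metric rescales each element of an orthonormal basis by the reciprocal square root, leaving $\sum_i \langle s_i, s_i \rangle_h$ pointwise invariant) nor the curvature form. Griffiths-positivity of $h_1 \oplus h_2$ is immediate from the positivity of $h_1, h_2$, so Theorem \ref{thm: Main} applies and yields an asymptotic expansion of $B_k$.

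Next I would observe that the bundle decomposition descends to an orthogonal splitting $H^0(X, \Sym^k E) = \bigoplus_{a+b=k} H^0(X, L_1^a \otimes L_2^b)$. Picking orthonormal bases adapted to this splitting makes $B_k$ block-diagonal, with the $(a,b)$-block equal to $B_{a,b} \cdot \Id_{L_1^a \otimes L_2^b}$. The curvature $F_{\Sym^k h}$ is likewise block-diagonal, with the $(a,b)$-block the scalar form $-\sqrt{-1}(a\omega_1 + b\omega_2)$ times the identity, and the Chern connection preserves each line summand. Hence the universal coefficients $b_{k,0}$ and $b_{k,1}$ supplied by Theorem \ref{thm: Main} are also block-diagonal, and it suffices to read off their $(a,b)$-blocks.

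For $b_{a,b,0}$, the formula $b_{k,0} = \frac{\sqrt{-1}}{k} \Lambda F_{\Sym^k h}$ immediately restricts to $\frac{1}{a+b}(a\Lambda_\omega \omega_1 + b\Lambda_\omega \omega_2)$ on the $(a,b)$-block, giving the claimed expression. For $b_{a,b,1}$, the $(a,b)$-summand is a line bundle with curvature form $\omega'_{a,b} := a\omega_1 + b\omega_2$, so every matrix factor in the formula of Theorem \ref{thm: Main} becomes a scalar function on this block. The same scalar computation used to deduce Corollary \ref{cor: line bundle computation} from Proposition \ref{prop: computation}, which at no point used that $\omega'$ is the curvature of a power of a single line bundle, then collapses the expression to $\Scal_\omega - \tfrac{1}{2} \Lambda_\omega \omega'_{a,b} \cdot \Scal_{\omega'_{a,b}}$, matching the claimed formula. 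The main obstacle is the mildly fiddly combinatorial check in the first step that the Sym-power metric really splits orthogonally as a rescaled tensor product on each summand; everything after that is bookkeeping, since the simultaneous block-diagonality of $B_k$, $F_{\Sym^k h}$ and the Chern connection reduces the problem on each block to the scalar line-bundle identity already established.
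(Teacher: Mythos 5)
Your approach to deriving the coefficient formulas is correct and matches the paper: decompose $\Sym^k(L_1 \oplus L_2) \cong \bigoplus_{a+b=k} L_1^a \otimes L_2^b$ with $\Sym^k h$ splitting orthogonally as $\bigoplus h_1^a \otimes h_2^b$ (the normalized basis $\{e_1^a e_2^b/\sqrt{a!\,b!}\}$ of Proposition \ref{prop: sym} already absorbs the combinatorial constant you worry about), observe that $B_k$, $F_{\Sym^k h}$ and the Chern connection are block-diagonal, and then read off the scalar formulas on each $(a,b)$-block via the line-bundle computation of Corollary \ref{cor: line bundle computation}. That part is fine.

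The genuine gap is the error bound. You write ``so Theorem \ref{thm: Main} applies and yields an asymptotic expansion of $B_k$'' and leave it there, but Theorem \ref{thm: Main} gives an error of $O(k^{p-N})$ in the $C^p$ norm, with the coefficient sections $b_{k,i}$ only bounded as $\|b_{k,i}\|_{C^p,op} = O(k^p)$. The corollary claims the strictly better bound $O\bigl((a+b)^{-N}\bigr)$ in $C^p$, so a direct appeal to the main theorem does not establish the stated result. To close the gap one needs to show that, in the direct-sum case, the coefficient sections actually satisfy $\|\rho\, b_{k,i}\|_{C^p,op} = O(1)$ (rather than $O(k^p)$), which lets you apply Theorem \ref{thm: C_0} with $N$ replaced by $N+p$ and then absorb the tail $b_{k,N+1},\dotsc,b_{k,N+p}$ into the error. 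The paper achieves this by noting that $\End(\Sym^k E)$ is here a direct sum of trivial line bundles, so each $b_{k,i}(x,\overline{y})$ is a diagonal matrix; the operator norm is then just the maximum modulus of the diagonal entries, each of which is a scalar function analytic in $x$ and $\overline{y}$, and Cauchy's integral formula gives $O(1)$ control of its $C^p$ derivatives without picking up factors of $k$ from the connection terms (in contrast to the generic proof of Theorem \ref{thm: Main 2}, where the Chern connection matrix $\mathfrak{s}^k(\eta)$ contributes a factor of $k$ per derivative). You need to supply this argument to justify the error term you are claiming.
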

\begin{proof}
The formulas for $b_{a,b,i}$ follow from Corollary \ref{cor: line bundle computation} and the fact that $\Sym^k h = \bigoplus_{i=0}^k h_1^i \otimes h_2^{k-i}$. It remains to show that the asymptotic expansion holds with respect to the $C^p$ norm.

Let $U \subseteq X$ be a sufficiently small coordinate neighborhood and let $\rho \in C_c^\infty(U)$. We know that
\begin{equation*}
\left \|\rho(x) \left ( e^{-\mathfrak{s}^k(\phi(x))} \overline{K_k(x,x)} - \frac{1}{2\pi} \left ( b_{k,0}(x,\overline{x})k + \dotsb + b_{k,N+p}(x,\overline{x}) k^{-N-p+1} \right) \right) \right \|_{C^p,op} = O(k^{-N})
\end{equation*}
and that
\begin{equation*}
    \|b_{k,i}(x,\overline{y})\|_{op(\Sym^k E_x,\Sym^k E_x)} = O(1).
\end{equation*}
As $\Sym^k E$ is a direct sum of line bundles, $\End(\Sym^k E)$ is a direct sum of trivial bundles. Moreover, the formula for $b_{k,i}(x,\overline{y})$ implies that $b_{k,i}(x,\overline{y})$ is a diagonal matrix for each $i$. In particular, the operator norm of $b_{k,i}(x,\overline{y})$ is just the maximum of the absolute value of the diagonal entries. Since $b_{k,i}(x,\overline{y})$ is analytic in $x$ and $\overline{y}$, Cauchy's integral formula implies that
\begin{equation*}
    \|\rho(x) b_{k,i}(x,\overline{x})\|_{C^p,op} = O(1).
\end{equation*}
Therefore,
$$
\left \|\rho(x) \left ( e^{-\mathfrak{s}^k(\phi(x))} \overline{K_k(x,x)} - \frac{1}{2\pi} \left ( b_{k,0}(x,\overline{x})k + \dotsb + b_{k,N}(x,\overline{x}) k^{-N+1} \right) \right) \right \|_{C^p,op} = O(k^{-N}).
$$

\end{proof}

\begin{cor} \label{cor: HE}
Suppose that $h$ is a Hermitian-Einstein metric. Let $N$ and $p$ be fixed nonnegative integers. Then, there exist smooth functions $b_{1}(x), \dotsc, b_N(x)$, that do not depend on $k$, such that
\begin{equation*}
    B_k(x,x) = \left ( b_{0}(x) \Id_{\Sym^k E}\right )k + \dotsb + \left ( b_{N}(x) \Id_{\Sym^k E} \right ) k^{1-N} + O(k^{-N})
\end{equation*}
where the error term is bounded with respect to the $C^p$ norm. In particular,
\begin{equation*}
b_{0}(x) = c
\end{equation*}
where $c\Id_E = \sqrt{-1} \Lambda F_h$ and 
\begin{equation*}
b_{1}(x) = \frac{1}{2}\Scal_\omega(x).
\end{equation*}
\end{cor}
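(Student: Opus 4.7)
The plan is to apply Theorem \ref{thm: Main} and exploit a specifically one-dimensional consequence of the Hermitian--Einstein condition. Because $\bigwedge^{1,1}T^*X$ is one-dimensional at every point of the Riemann surface $X$, the equation $\sqrt{-1}\Lambda F_h = c\Id_E$ is in fact equivalent to
\begin{equation*}
F_h = -\sqrt{-1}\,c\,\omega\otimes \Id_E,
\end{equation*}
so that $F_h$ is already a constant multiple of $\omega$ times the identity endomorphism. In particular the trace-free part of the Chern connection of $h$ is flat, and a standard Dolbeault argument produces, over any sufficiently small simply connected coordinate disk $U$, a holomorphic frame of $E|_U$ in which $h = e^{-\phi_0}\Id_{E|_U}$ for some scalar function $\phi_0$. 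After a further scalar holomorphic change of frame we may assume that $\phi_0$ has no pure holomorphic or anti-holomorphic Taylor part at the centre, so the set-up of Section 2 holds in a scalar frame.

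First I would read off $b_0$ and $b_1$ directly from the formulas in Theorem \ref{thm: Main}. Using Proposition \ref{prop: k}(4) and the fact that $\Lambda$ and $\mathfrak{s}^k$ act on different tensor factors,
\begin{equation*}
\sqrt{-1}\,\Lambda F_{\Sym^k h} = \mathfrak{s}^k\!\bigl(\sqrt{-1}\,\Lambda F_h\bigr) = \mathfrak{s}^k(c\Id_E) = c\,k\,\Id_{\Sym^k E},
\end{equation*}
so $b_{k,0}(x) = c\,\Id_{\Sym^k E}$ and $b_0(x) = c$. For $b_1$ the decisive observation is that $F_{\Sym^k h} = -\sqrt{-1}ck\,\omega\otimes \Id_{\Sym^k E}$ is globally parallel, since $\omega$ is K\"ahler and $\Id_{\Sym^k E}$ is parallel in $\End(\Sym^k E)$. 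Hence every covariant derivative of $F_{\Sym^k h}$ vanishes; in particular $\Lambda\Delta F_{\Sym^k h} = 0$ and $\nabla^{1,0}\Lambda F_{\Sym^k h} = \nabla^{0,1}\Lambda F_{\Sym^k h} = 0$, so the first and third terms in the formula for $b_{k,1}$ in Theorem \ref{thm: Main} vanish identically, leaving $b_{k,1} = \tfrac{1}{2}\Scal_\omega\,\Id_{\Sym^k E}$ and $b_1(x) = \tfrac{1}{2}\Scal_\omega(x)$.

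For the higher-order coefficients $b_m$ with $m \ge 2$ I would reduce to the line-bundle case using the scalar frame constructed above. In that frame $\phi$ and $\psi$ are scalar multiples of $\Id_E$, so each of $\theta(x,y,z)$, $\tau(x,y,z)$, $\mathfrak{s}^k(\tau)$, its inverse, the function $\tilde g$, and the quantities $A_{k,m}, a_{k,m}, b_{k,m}$ produced by the inductive construction of Section 5 factors as (scalar function)$\otimes \Id_{\Sym^k E}$. Inspecting the recursion then shows $b_{k,m}(x,z) = \beta_m(x,z)\,\Id_{\Sym^k E}$, where $\beta_m$ is precisely the scalar coefficient one would obtain by running the same construction for the line bundle $(L,h_L)$ with $h_L = e^{-\phi_0}$; by the known line-bundle expansion (of which Corollary \ref{cor: line bundle computation} is the first two instances) these $\beta_m$ are independent of $k$. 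The main conceptual point, and the only place where the rank of $E$ enters, is the identity $\mathfrak{s}^k(\lambda \Id_E) = k\lambda\,\Id_{\Sym^k E}$ from Proposition \ref{prop: k}(4), which guarantees that every factor of $\mathfrak{s}^k$ contributes a clean power of $k$ that is absorbed by the normalization of the expansion; the partition-of-unity gluing in Theorem \ref{thm: Main 2} then assembles the local scalars into globally defined $b_m \in C^\infty(X)$, and the expected obstacle of frame-dependence disappears because the Bergman function is intrinsic.
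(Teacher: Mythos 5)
Your proposal is correct, and it takes a genuinely different route from the paper. The paper works in an arbitrary normal holomorphic frame and feeds the Hermitian--Einstein condition directly into the recursion of Section 5: it polarizes the identity $\sqrt{-1}\Lambda F_h = c\Id_E$ to show the curvature coefficient $\tilde F(x,z)$ is a scalar multiple of the identity, then uses the resulting commutativity $\tilde F\eta = \eta\tilde F$ to cancel the non-scalar connection terms and conclude that the low-order Taylor coefficients of $\mathfrak{s}^k(\tau)$ are scalar, from which it argues inductively that each $b_{k,i}$ is a scalar multiple of $\Id_{\Sym^k E}$ independent of $k$. You instead observe that the trace-free Chern curvature vanishes, so on a small simply connected disk the projectivized Chern connection is flat and there is a holomorphic frame in which $h = e^{-\phi_0}\Id_E$; after a scalar holomorphic change of frame this is a normal frame, and in it $\phi$, $\psi$, $\theta$, $\tau$, and the whole recursion of Section 5 factor as (scalar)$\otimes\Id_{\Sym^k E}$ with all powers of $k$ dropping out through $\mathfrak{s}^k(\lambda\Id_E)=k\lambda\Id_{\Sym^k E}$. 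Your route is cleaner: it gets the scalar, $k$-independent structure of \emph{all} orders $b_m$ in one stroke, whereas the paper's induction verifies scalar-ness of the Taylor coefficients of $\mathfrak{s}^k(\tau)$ term by term and becomes delicate past first order; your computation of $b_1$ (via parallelism of $F_{\Sym^k h}$, so that $\Delta F_{\Sym^k h}$ and both $\nabla\Lambda F_{\Sym^k h}$ terms vanish) is also slightly slicker than the paper's direct expansion in Appendix B. The one place where you should be a bit more careful is the final gluing step: what you need is that the locally defined scalars $b_m$ are actually frame- and chart-independent so that they patch to global functions with the $C^p$ error bound. The cleanest way to see this is that $k$-independent coefficients in an asymptotic expansion of the intrinsic quantity $B_k(x,x)$ are unique (the leading discrepancy between two such expansions would contradict the error bound), and then the $C^p$ control follows as in the other corollaries by running the argument with $N+p$ terms and truncating.
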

\begin{proof}

Let $U \subseteq X$ be a sufficiently small coordinate neighborhood and let $\rho \in C_c^\infty(U)$. We know that
\begin{equation*}
\left \|\rho(x) \left ( e^{-\mathfrak{s}^k(\phi(x))} \overline{K_k(x,x)} - \frac{1}{2\pi} \left ( b_{k,0}(x,\overline{x})k + \dotsb + b_{k,N+p}(x,\overline{x}) k^{-N-p+1} \right) \right) \right \|_{C^p,op} = O(k^{-N})
\end{equation*}
and that
\begin{equation*}
    \|b_{k,i}(x,\overline{y})\|_{op(\Sym^k E_x,\Sym^k E_x)} = O(1).
\end{equation*}
We will show that $b_{k,i}(x,\overline{y})$ are of the form $b_{k,i}(x,\overline{y}) = b_i(x,\overline{y}) \Id_{\Sym^k E}$, where $b_i(x,\overline{y})$ is analytic in $x$ and $\overline{y}$, for each $i$. Then, Cauchy's integral formula and the fact that 
\begin{equation*}
    \nabla_0 (b_i(x,\overline{x}) \Id_{\Sym^k E}) = d (b_i)(x,\overline{x}) \otimes \Id_{\Sym^k E}
\end{equation*}
will imply that
\begin{equation*}
    \|\rho(x)b_{k,i}(x,\overline{x})\|_{C^p,op} = O(1).
\end{equation*}

Set 
\begin{equation*}
H(x,z) := h(x,z) = e^{-\psi(x,z)}
\end{equation*}
and write
\begin{equation*}
    \omega(y) = \sqrt{-1} \tilde{g}(y,\overline{y}) dy \wedge d\overline{y}.
\end{equation*}
We know that
\begin{equation*}
\begin{split}
    \tau(x,y,z) = &  - \partial_2(\partial_1 (H) {H}^{-1})(x,z) \\
    & - \frac{1}{2} \partial_2(\partial_1^2(H) {H}^{-1})(x,z)(y-x)  \\
    & + \partial_1(H) {H}^{-1} \partial_2(\partial_1(H) {H}^{-1})(x,z)(y-x) \\
    & + \text{h.d.t.}.
\end{split}
\end{equation*}
On the other hand,
\begin{equation*}
\begin{split}
    \tilde{F}(x,z) & := -\partial_2(\partial_1 (H){H}^{-1})(x,z) \\
    & = \tilde{g}(x,z) \sqrt{-1} \Lambda F(x,z) \\
    & =  c k \tilde{g}(x,z).
\end{split}
\end{equation*}
where $F = F_{h}$ is the curvature form of $h$. In particular, $\tilde{F}(x,z)$ is a scalar function and 
\begin{equation*}
    \begin{split}
        \partial_1 \tilde{F}(x,z) & = -\partial_2(\partial_1^2 (H){H}^{-1})(x,z) + \partial_2 (\partial_1 (H){H}^{-1} \partial_1(H) {H}^{-1})  \\
        & = -\partial_2(\partial_1^2 (H){H}^{-1})(x,z) + 2 \partial_1 (H){H}^{-1} \partial_2 (\partial_1(H) {H}^{-1})  \\
\end{split}
\end{equation*}
is also a scalar function. So, we can write
\begin{equation*}
\begin{split}
    \mathfrak{s}^k(\tau(x,y,z)) =   kf_0(x,z) + kf_1(x,z) (y-x) + \text{h.d.t.}
\end{split}
\end{equation*}
where $f_0(x,z)$ and $f_1(x,z)$ are scalar functions.

We start with the case when $i = 0$. By our observations from above,
\begin{equation*}
    b_{k,0}(x,z)  = \frac{1}{k} \mathfrak{s}^k(\tau(x,x,z)) \tilde{g}(x,z)^{-1} = c  \Id.
\end{equation*}
Proceeding by induction, suppose that $i > 0$ and that \begin{equation*}
    b_{k,i-1} (x,z) = b_{i-1}(x,z)
\end{equation*}
for some analytic function $b_{i-1}(x,z) \Id_{\Sym^k E}$. If $i = 1$, then
\begin{equation*}
\begin{split}
    (x-y)A_{k,0}(x,y,z) & = k \mathfrak{s}^k(\tau(x,y,z))^{-1} \tilde{g}(y,z) b_{k,0}(x,z) - \Id_{\Sym^k E}  \\
    & = \alpha(x,z) (y-x) + \text{h.d.t.}
\end{split}
\end{equation*}
for some scalar function $\alpha(x,z)$ that does not depend on $k$. Similarly, if $i > 1$, then
\begin{equation*}
\begin{split}
    (x-y) A_{k,i-1}(x,y,z) & = k \mathfrak{s}^k(\tau(x,y,z))^{-1} \tilde{g}(y,z) b_{k,i-1}(x,z)  -  k \mathfrak{s}^k(\tau(x,x,z))^{-1} \tilde{g}(x,z) b_{k,i-1}(x,z)\\
    & = \alpha(x,z) (y-x) + \text{h.d.t.}.
\end{split}
\end{equation*}
So, $A_{k,i-1}(x,x,z) = -\alpha(x,z)$ is a scalar function. Thus,
\begin{equation*}
b_{k,i}(x,z) = \tilde{g}(x,z)^{-1} \deriv{A_{k,i-1}}{z}(x,x,z)
\end{equation*}
is a scalar function and we can write
$$
b_{k,i}(x,z) = b_i(x,z) \Id_{\Sym^k E}
$$
where $b_i(x,z)$ is a function that does not depend on $k$.

Finally, we will compute $b_1(x,\overline{x})$. Since $h$ is Hermitian-Einstein, 
\begin{equation*}
\nabla^{0,1} (\Lambda (F_{\Sym^k h})) = \overline{\partial} (\Lambda (F_{\Sym^k h})) = 0.    
\end{equation*}
Additionally,
\begin{equation*}
\begin{split}
\sqrt{-1} \Lambda (\Delta (F_{ h}))(y) =  & - \tilde{g}^{-1}\partial_1\partial_2(\tilde{g}^{-1}) \tilde{F}(y,\overline{y}) - \tilde{g}^{-1} \partial_1(\tilde{g}^{-1}) \partial_2\tilde{F}(y,\overline{y}) \\
    & + \tilde{g}^{-1} \partial_2(\tilde{g}^{-1}) \eta \tilde{F} (y,\overline{y}) + \tilde{g}^{-2} \eta \partial_2 \tilde{F}(y,\overline{y}) \\
    & -\tilde{g}^{-1} \partial_2(\tilde{g}^{-1}) \partial_1\tilde{F}(y,\overline{y}) - \tilde{g}^{-2} \partial_1\partial_2\tilde{F}(y,\overline{y}) \\
    & - \tilde{g}^{-1} \partial_2(\tilde{g}^{-1}) \tilde{F} \eta(y,\overline{y}) - \tilde{g}^{-2} \partial_2\tilde{F}\eta(y,\overline{y}) \\
    = & \hspace{3pt} \tilde{g}^{-1} \partial_1\partial_2(-\tilde{g}^{-1} \tilde{F}) (y,\overline{y}) \\
    & - \tilde{g}^{-1} \eta \partial_2(-\tilde{g}^{-1} \tilde{F}) (y,\overline{y})\\
    & +  \tilde{g}^{-1} \partial_2(-\tilde{g}^{-1} \tilde{F}) \eta (y,\overline{y}). \\
\end{split}
\end{equation*}
Since $-\tilde{g}^{-1} \tilde{F}(y,\overline{y}) = \sqrt{-1}\Lambda (F_{ h}) = c \Id$, we deduce that $\sqrt{-1} \Lambda(\Delta(F_{h})) = 0$. Then, 
\begin{equation*}
    \sqrt{-1} \Lambda(\Delta(F_{\Sym^k h})) =  \mathfrak{s}^k (\sqrt{-1} \Lambda(\Delta(F_{h}))) = 0.
\end{equation*}
Thus,
\begin{equation*}
 b_{k,1}(x,\overline{x}) = \frac{1}{2} \Scal_\omega(x) \Id_{\Sym^k E}.
\end{equation*}

\end{proof}

\newpage

\end{document}